\documentclass[times,sort&compress,3p]{elsarticle}
\usepackage{amssymb,amsmath, amsthm, mathabx,mathtools}
\usepackage{amsmath,amsfonts,amsthm,epsfig,epstopdf,url,array}
\usepackage{tikz}
\usetikzlibrary{shapes,snakes}
\usepackage{color, enumerate}
\usepackage[colorlinks=true,linkcolor=blue, citecolor=blue]{hyperref}
\usepackage{pgfplots}

\usepackage{bm}
\usepackage{graphicx}
\usepackage{caption}
\usepackage{subcaption}
\usepackage{titlesec}
\usepackage{epstopdf}

\usepackage{sectsty}
\usepackage{lipsum}
\usepackage{algorithm}
\usepackage[noend]{algpseudocode}
\usepackage{algpseudocode}
\usepackage{pifont}

\newcommand{\Gb}{\mathbf{G}}

\newcommand{\Hb}{\mathbf{H}}

\newcommand{\hlam}{\widehat{\lambda}}

\newcommand{\Ub}{\mathbf{U}}

\newcommand{\norm}[1]{\left\lVert#1\right\rVert}


\DeclareMathAccent{\wideparen}{\mathord}{largesymbols}{"F3}

\newcommand{\rO} {\mathrm{O}}
\newcommand{\ro} {\mathrm{o}}
\newcommand{\deq}{\mathrel{\mathop:}=}

\newcommand{\R} {\mathbb{R}}
\newcommand{\C} {\mathbb{C}}
\newcommand{\N} {\mathbb{N}}

\newcommand{\adj}{^*}

\newcommand{\dist} {\mathrm{dist}}
\newcommand{\CR} {\C\setminus\R_{+}}

\DeclareMathOperator{\diag}{diag}
\DeclareMathOperator{\tr}{tr}
\DeclareMathOperator{\Tr}{Tr}

\DeclareMathOperator{\supp}{supp}

\DeclareMathOperator{\re}{\mathrm{Re}}
\DeclareMathOperator{\im}{\mathrm{Im}}

\newcommand{\caC}{{\mathcal C}}
\newcommand{\caD}{{\mathcal D}}

\newcommand{\caG}{{\mathcal G}}
\newcommand{\caH}{{\mathcal H}}
\newcommand{\caI}{{\mathcal I}}

\newcommand{\caO}{{\mathcal O}}

\newcommand{\caQ}{{\mathcal Q}}

\newcommand{\caT}{{\mathcal T}}

\newcommand{\ub}{\mathbf{u}}
\newcommand{\vb}{\mathbf{v}}

\newcommand{\bsd}{{\boldsymbol d}}

\newcommand{\wt}{\widetilde}
\newcommand{\ol}{\overline}
\newcommand{\wh}{\widehat}

\newcommand{\beq}{ \begin{equation} }
\newcommand{\eeq}{ \end{equation} }
\newcommand{\beqs}{	\begin{equation*}	}
\newcommand{\eeqs}{	\end{equation*}	}

\newcommand{\dd}{\mathrm{d}}
\newcommand{\ii}{\mathrm{i}}

\newcommand\prob[1]{\mathbb{P}\left[#1\right]}

\newcommand\Absv[1]{\left\vert#1\right\vert}
\newcommand\absv[1]{\vert#1\vert}
\newcommand\llbra{\llbracket}
\newcommand\rrbra{\rrbracket}

\newcommand\AND{\quad\text{and}\quad}

\numberwithin{equation}{section}

\theoremstyle{plain}
\newtheorem{thm}{Theorem}[section]

\newtheorem{lem}[thm]{Lemma}

\newtheorem{assu}[thm]{Assumption}
\newtheorem{prop}[thm]{Proposition}
\newtheorem{defn}[thm]{Definition}

\theoremstyle{remark}
\newtheorem{rem}[thm]{Remark}

\begin{document}

\begin{frontmatter}

\title{Spiked multiplicative  random matrices and principal components}

\author[1]{Xiucai Ding \corref{mycorrespondingauthor}}
\author[2]{Hong Chang Ji}

\address[1]{Department of Statistics, University of California, Davis.}
\address[2]{Institute of Science and Technology Austria}

\cortext[mycorrespondingauthor]{Email address: \url{xcading@ucdavis.edu}}

\begin{abstract}
In this paper, we study the eigenvalues and eigenvectors of the spiked invariant multiplicative models when the randomness is from Haar matrices. We establish the limits of the outlier eigenvalues $\widehat{\lambda}_i$ and the generalized components ($\langle \vb, \widehat{\ub}_i \rangle$ for any deterministic vector $\vb$) of the outlier eigenvectors $\widehat{\ub}_i$ with optimal convergence rates. Moreover, we prove that the non-outlier eigenvalues stick with those of the unspiked matrices and the non-outlier eigenvectors are delocalized. The results also hold near the so-called BBP transition and for degenerate spikes. On one hand, our results can be regarded as a refinement of the counterparts of \cite{outliermodel} under additional regularity conditions. On the other hand, they can be viewed as an analog of \cite{DYaos} by replacing the random matrix with i.i.d. entries with Haar random matrix.
\end{abstract}

\begin{keyword} 
Free multiplication of random matrices \sep Spiked model  \sep  Principal components \sep Local laws
\MSC[2020] Primary 60B20 \sep Secondary 46L54 \sep 62H12
\end{keyword}

\end{frontmatter}

\normalcolor
\section{Introduction}
Finite rank deformed random matrices have found applications in many scientific endeavors. In these contexts, the low-rank part is usually regarded as the signal whereas the random matrix part can be viewed as the high dimensional noise. From an application viewpoint, researchers are interested in understanding the signal part from the noisy matrix especially from the first few largest eigenvalues and eigenvectors, which are closely related to the principal component analysis (PCA) \cite{jolliffe2013principal}. 

In the literature,  a popular and sophisticated model is the spiked covariance matrix model proposed by Johnstone \cite{10.1214/aos/1009210544}. In such a model, a finite number of spikes (eigenvalues detached from the bulk of the spectrum) are added to the spectrum of the population covariance matrix. Specifically, consider that 
\begin{equation*}
\widehat{Y}=\widehat{A}^{1/2} X.
\end{equation*}
Here $\widehat{A}$ is the spiked covaraince matrix constructed by adding a finite rank perturbation to some non-spiked positive definite matrix $A,$ and $X=(x_{ij})$ is the main random source where $x_{ij}$'s are i.i.d. centered random variables. An extension  is the spiked separable covariance matrix \cite{DYaos}, where the data matrix is 
$Y=\widehat{A}^{1/2}X\widehat{B}^{1/2},$
with another spiked matrix $\widehat{B}$. In spatiotemporal data analysis, $\widehat{A}$ and $\widehat{B}$ are respectively the spatial and
temporal covariance matrices \cite{PAUL200937}.








While the assumption that $X$ has i.i.d. entries has been useful in many instances, other types of random matrices also appear naturally in certain applications. An important example is the Haar  distributed random matrices which have been used in statistical learning theory, for instance, see \cite{ELnips,MR4130669,nips20201,Liu2020Ridge,2020arXiv200500511Y}. In the current paper, we aim to study the spiked random matrices where the main randomness is Haar random matrices. Especially,  we consider that $X=U$ is either an $N \times N$ random Haar unitary or orthogonal matrix, so that {
\begin{equation}\label{eq_defndatamatrixtype}
\widehat Y=\widehat{A}^{1/2}U \widehat{B}^{1/2}.
\end{equation} }
We point out that the data matrix (\ref{eq_defndatamatrixtype}) has also appeared in the study of high dimensional data analysis, for instance, see \cite{7587390,bun2017,DW}.

\subsection{Some related results on finite rank deformation of random matrices}\label{sec_finiterankmatrixsummary}
In this section, we first pause to give a brief review of
the literature on the spectra of fixed-rank deformation of random matrices, a category of random matrix models including signal-plus-noise and spiked covariance matrices as typical examples. There exists rich literature in understanding the limiting behavior of the eigenvalues and eigenvectors of such deformed models. Since the seminal work of Baik, Ben Arous, and P{\' e}ch{\' e} \cite{BBP}, it is now well-understood that the extreme eigenvalues undergo the so-called BBP transition as the magnitude of the deformation changes. Roughly speaking, the extreme eigenvalues of the deformed matrix detaches from spectrum of the undeformed random matrix if and only if the strength of the deformation exceeds a certain threshold. In this case, we call the extreme eigenvalue as an outlier, and the associated eigenvector as an outlier eigenvector. In parallel to the outlier eigenvalues, an outlier eigenvector is concentrated on a cone with the axis parallel to the true eigenvectors (of the deformation) and the aperture explicitly determined by the deformations. Moreover, the remaining eigenvalues are close to those of the undeformed random matrices and the associated eigenvectors are delocalized.  

The results in the same spirit of the aforementioned arguments have been established for various deformed random matrix models under different settings when the random matrix part contains i.i.d. entries. On one hand, when the deformation is additive, the eigenvalues and eigenvectors have been studied for deformed Wigner matrices in \cite{2020arXiv200913143B, MR2782201, MR2489158, MR2919200, KY13,MR3262497}, for signal-plus-noise matrices in \cite{MR4206682, MR2944410,MR3298725, MR3837103, MR4036038} and for deformed non-Hermtian matrices in \cite{MR4283371,MR3500273, MR3552011, 10.1214/19-AIHP1002,  MR3010398}. On the other hand, when the deformation is multiplicative, the eigenvalues and eigenvectors have been investigated for spiked covariance matrices in \cite{MR2451053,MR2887686,BBP, BAIK20061382, BDWW,MR2782201, Bloemendal2016,MR3078286,DINGRMTA,MR2399865}, for spiked separable covariance matrices in \cite{DYaos}, for spiked CCA matrices in \cite{MR3909944,2021arXiv210203297M}, for spiked MANOVA matrices in \cite{MR4302572,MR3611497} and for spiked correlation matrices in \cite{MR4286186}.

When the randomness comes from Haar invariant random matrices, there are relatively fewer related works \cite{outliermodel,MR2782201,MR3792626}. All the existing works focus on finding the limits of the outlier eigenvalues and eigenvectors under stronger assumptions that the spikes are far away from the critical values by a distance of constant order. Consequently, they leave the convergent rates and the non-outlier eigenvalues and eigenvectors undiscussed.  The aim of this paper is to fill this gap by establishing the first order limits and precise rates of convergence for the outlier eigenvalues and eigenvectors  and {concentration} bounds for the non-outlier eigenvalues and eigenvectors for the model (\ref{eq_defndatamatrixtype}).





\subsection{An overview of our results}\label{sec_overviewofourresults}
In this subsection, we provide a rough overview of our results. Our theoretical findings are in the same spirit as those discussed in Section \ref{sec_finiterankmatrixsummary}. We pause to introduce some notations. It is well-known that for the unspiked model $A^{1/2} UBU^* A^{1/2},$ its empirical spectral distribution (ESD) is given by the \emph{free multiplicative convolution} of the ESDs $\mu_A$ and $\mu_B$ of $A$ and $B$ respectively, denoted as $\mu_A \boxtimes \mu_B$ \cite{Voiculescu1991}; see Definition \ref{def:freeconv} below for a precise definition. More recently, in our previous works \cite{DJ1,JHC}, we investigated the behavior of $\mu_A \boxtimes \mu_B$ by analyzing a pair of analytic functions, known as \emph{subordination functions}, $\Omega_A$ and $\Omega_B$ that define the free convolution; see (\ref{eq_defn_eq}) for details.  

We briefly describe our results, firstly on eigenvalues of $\wh{Y}\wh{Y}\adj$. Due to invariance, we only need to consider diagonal $\widehat{A}$ and $\widehat{B}$ whose entries are denoted in the decreasing order as $\{\widehat{a}_i\}$ and $\{\widehat{b}_j\}.$ We further assume that $\wh{A}$ and $\wh{B}$ contain spikes $\{\widehat{a}_i: 1 \leq i \leq r\}$ and $\{\widehat{b}_j: 1 \leq j \leq s\}$ with finite $r$ and $s$, respectively. A spike $\widehat{a}_i$ (or $\widehat{b}_j$) gives rise to an outlier of $\widehat{Y} \widehat{Y}^*$ if and only if $\widehat{a}_i>\Omega_{\beta}(E_{+})$ (or $\widehat{b}_j>\Omega_{\alpha}(E_{+})$), where $E_+$ is the rightmost edge of the support of $\mu_\alpha \boxtimes \mu_\beta.$ In this case, the outlier concentrates around a fixed location, namely $\Omega_B^{-1}(\widehat{a}_i)$ (or $\Omega_A^{-1}(\widehat{b}_j)$), where $\Omega_{A(B)}^{-1}(\cdot)$ are the inverse functions of the subordination functions; see Theorem \ref{thm_outlier} for more details. Our result also shows that this transition occurs on the scale $N^{-1/3}$, as in a typical BBP transition \cite{BBP}. More precisely, if $\widehat{a}_i - \Omega_{\beta}(E_{+}) \gg N^{-1/3}$ or $\widehat{b}_j - \Omega_{\alpha}(E_{+}) \gg N^{-1/3}$, that is, if the spike is supercritical, then the outlier will be well-separated from the support of $\mu_A \boxtimes \mu_B$ and can be detected readily. For $0< \widehat{a}_i - \Omega_{B}(E_+) \ll N^{-1/3}$ or $0< \widehat{b}_j - \Omega_{\alpha}(E_{+}) \ll N^{-1/3}$, that is, when the spike is subcritical, the corresponding "outlier" cannot be distinguished from the non-deformed spectrum and will instead stick to the right-most edge $E_{+}$ up to some random fluctuation of order $\rO(N^{-2/3})$. The rest of the non-outlier eigenvalues will stick to the eigenvalues of $A^{1/2} UBU^* A^{1/2}$; see Theorem \ref{thm_eigenvaluesticking} for more details. {We also remark that the convergence rates in Theorems \ref{thm_outlier} and \ref{thm_eigenvaluesticking} are consistent with \cite{BBP}, indicating that the second-order transition therein might also be true for our model. That is, under some mild assumptions, the outlier eigenvalues are asymptotically Gaussian whereas the extremal non-outlier eigenvalues follow Tracy-Widom distribution. We will pursue this direction in future works.}

Next for the sample eigenvector of $\widehat{Y}\widehat{Y}^*$ associated with the outlier caused by a supercritical spike $\widehat{a}_i$, we show that it is concentrated on a cone with axis parallel to the true eigenvector with an explicit aperture determined by $\widehat{a}_i$ and $\Omega_B$; see Theorem \ref{thm_outlieereigenvector} for more details.  On the other hand, the sample eigenvector of $\widehat{Y} \widehat{Y}^*$ that is associated with a subcritical spike $\widehat{b}_j$ is delocalized. Moreover, the non-outlier eigenvectors are also delocalized; see Theorem \ref{thm_nonoutliereigenvector} for more details. Similar results hold for the eigenvectors of $\widehat{Y}^* \widehat{Y}$ by switching the role of $\widehat{A}$ and $\widehat{B}.$ We remark that in \cite{outliermodel}, the authors have studied the convergent limits of the outlying eigenvalues and eigenvectors on a macroscopic scale; see  Remarks \ref{rem_eigenvaluesitkcing} and \ref{eq_strongeigenvectorassumption} for comparison. Our results extend the counterparts of \cite{outliermodel} to full strength, that is, with the optimal scale and rate.

Our proof adapts the same proof strategy as in \cite{Bloemendal2016, DYaos,KY13}. The key input is to establish the local laws both near and far away from the edge of the undeformed random matrices. {That is to say, we need precise concentration estimates for the resolvent of  $A^{1/2} UBU^* A^{1/2},$ denoted as $\widetilde{G}(z)=(A^{1/2} UBU^* A^{1/2}-z)^{-1}, \ z=E+\mathrm{i} \eta, \eta \gg N^{-1}.$ Especially, we need the entry-wise local law, i.e., concentration estimates for $\widetilde{G}_{ij}(z), 1 \leq i, j \leq N,$ and the averaged local law, i.e., concentration estimate for $N^{-1} \sum_{i=1}^N  \widetilde{G}_{ii}(z).$ In fact, all these concentration estimates depend on the subordination functions and the Stieltjes transform of $\mu_A \boxtimes \mu_B;$ see Theorem \ref{prop_linearlocallaw} for more details.} Once we have the local laws, we can reduce our tasks to studying the undeformed models using some perturbative approach; see Section \ref{sec_proofstartegy} for more details on the proof strategies.  We remark that we can also generalize the results for the invariant additive models of \cite{outliermodel} in a similar fashion by using and modifying the local laws for the additive model $A+UBU^*$ as in \cite{BEC}.



This paper is organized as follows. In Section \ref{sec:mainresult}, we formally introduce our model and some necessary assumptions. In Section \ref{sec_mainresultshere}, we provide the main results of the paper. In Section  \ref{sec_toolsandproofs}, we collect and prove some results which will be used for our proofs. We also provide a description of our proof strategy. The technical proofs are provided in Sections \ref{sec_proofofspikedmodel} and \ref{sec_proofeigenvectors}.

\vspace{3pt}
\noindent {\bf Conventions.}
	Throughout the rest of the paper, $N$ always denotes the size of our matrix model and we often omit the dependence on $N$. For $m,n\in\N$, we denote the set $\{k\in\N:m\leq k\leq n\}$ by $\llbra m,n\rrbra$. For $i\in\llbra 1,N\rrbra$, we denote by $\mathbf{e}_{i}$ the $(N\times 1)$ column vector with $(\mathbf{e}_{i})_{j}=\delta_{ij}$. We use $I$ for the identity matrix of any dimension without causing any confusion, and we abbreviate $\tr =N^{-1}\Tr$ for matrices with any dimension. For a matrix $A,$ we denote its operator norm by $\norm{A}.$ {Throughout the paper, we use $\mathbf{1}$ for the indicator function. } 
	
	Finally, we use the standard big-$\rO$ and little-$\ro$ notations; for $N$-dependent nonnegative numbers $a_{N}$ and $b_{N}$, we write $a_{N}=\rO(b_{N})$ or $a_N \lesssim b_N$ if $a_{N}/b_{N}$ remains bounded, $a_{N}\sim b_{N}$ if $a_{N}/b_{N}$ and $b_{N}/a_{N}$ are both bounded, and $a_{N}=\ro(b_{N})$ {(or equivalently $a_N \ll b_N$)} if $a_{N}/b_{N}$ converges to zero, with the convention $\cdot/0\deq 0$.  


\vspace{3pt}

\section{Definition of spiked multiplicative model}\label{sec:mainresult}
In this section, we introduce the model and some necessary assumptions. 
\subsection{Some notations and assumptions}

We first introduce some notations and technical assumptions. Throughout the paper, we will consistently use the following notations. For an $N \times N$ matrix $W$, its empirical spectral distribution (ESD) is denoted as  	
\begin{equation*}
\mu_W=\frac{1}{N}\sum_{i=1}^N \delta_{\lambda_i(W)}. 
\end{equation*}
The following transforms will be used frequently. 
\begin{defn}\label{defn_transform} For a probability measure $\mu$ defined on $\mathbb{R}_+,$ its \emph{Stieltjes transform} $m_{\mu}$ is defined as
\beqs
	m_{\mu}(z)\deq\int\frac{1}{x-z}\dd\mu(x), \ \ \text{for }z\in\CR.
\eeqs 
Moreover,  we define the $\mathtt{M}$-transform $M_{\mu}$ and $\mathtt{L}$-transform $L_{\mu}$ on $\CR$ as 
\begin{align}\label{eq_mtrasindenity}
	M_{\mu}(z)&\deq 1-\left(\int\frac{x}{x-z}\dd\mu(x)\right)^{-1}= \frac{zm_{\mu}(z)}{1+zm_{\mu}(z)}, \ \ \
	L_{\mu}(z)\deq \frac{M_{\mu}(z)}{z}.
\end{align}
\end{defn}
We first introduce the non-spiked model.  Consider two $N\times N$ real, deterministic, positive definite matrices 
	\beqs
	A\equiv A_{N}=\diag(a_{1},\cdots,a_{N}) \AND B\equiv B_{N}=\diag(b_{1},\cdots,b_{N}).
	\eeqs
	

We next introduce our assumptions on the undeformed model.

\begin{assu}\label{assu_limit} 
Throughout the paper, we assume that there exist two $N$-independent absolutely continuous probability measures $\mu_{\alpha}$ and $\mu_{\beta}$ on $(0, \infty)$ with densities respectively $\rho_{\alpha}$ and $\rho_{\beta}$ satisfying the following.
	\begin{itemize}
	\item[(i).] For simplicity, we assume that both of them have means $1$, that is, $\int x\dd\mu_{\alpha}(x)=\int x\rho_{\alpha}(x)\dd x=1.$
	\item[(ii).] Both $\rho_{\alpha}$ and $\rho_{\beta}$ have single non-empty intervals as supports, denoted as $[E_-^{\alpha}, E_+^{\alpha}]$ and $[E_-^{\beta}, E_+^{\beta}],$ respectively. Here $E_-^{\alpha}, E_+^{\beta}, E_-^{\alpha}$ and $E_+^{\beta}$ are all positive numbers. Moreover, both of $\rho_{\alpha}$ and $\rho_{\beta}$ are strictly positive  in the interior of their supports. 
	
	\item[(iii).]  There exist constants $-1<t^{\alpha}_{\pm},t^{\beta}_{\pm}<1$ and $C>1$ such that 
	\begin{align*}
		&C^{-1}\leq \frac{\rho_{\alpha}(x)}{(x-E_{-}^{\alpha})^{t_{-}^{\alpha}}(E_{+}^{\alpha}-x)^{t_{+}^{\alpha}}}\leq C,\quad\forall x\in[E_{-}^{\alpha},E_{+}^{\alpha}],\\
		&C^{-1}\leq \frac{\rho_{\beta}(x)}{(x-E_{-}^{\beta})^{t_{-}^{\beta}}(E_{+}^{\beta}-x)^{t_{+}^{\beta}}}\leq C,\quad\forall x\in[E_{-}^{\beta},E_{+}^{\beta}].
	\end{align*}
	\end{itemize}	
Moreover, for the ESDs of $A$ and $B,$ denoted as $\mu_A$ and $\mu_B,$ we assume the following:
\begin{itemize}
	\item[(iv).] For the Levy distance $\mathcal{L}(\cdot, \cdot),$  we have that for any small constant $\epsilon>0,$  when $N$ is sufficiently large
	\begin{equation}\label{eq_epsilondefinition}
	\bsd\deq \mathcal{L}(\mu_{\alpha}, \mu_A)+\mathcal{L}(\mu_{\beta}, \mu_{B}) \leq N^{-1+\epsilon},
	\end{equation}

	\item[(v).] For the supports 
of $\mu_{A}$ and $\mu_{B}$, we have that for any constant $\delta>0,$  when $N$ is sufficiently large
	\beqs
		\supp\mu_{A}\subset [E_{-}^{\alpha}-\delta,E_{+}^{\alpha}+\delta]\AND \supp\mu_{B}\subset[E_{-}^{\beta}-\delta,E_{+}^{\beta}+\delta].
	\eeqs
		
	
	\end{itemize}
\end{assu}




The first assumption is introduced for technical simplicity and it can be removed easily via scaling; see Remark 3.2 of \cite{JHC} for details. The single-cut assumption in (ii) guarantees that there are only two edges so that $m_{\mu_{H}}(z)$ is always monotone outside the spectrum; this is a technicality and one can still prove the same result around the uppermost edge in a multi-cut setting. Moreover, assumption (iii) is introduced to guarantee the square root behavior near the edges of the free multiplicative convolution of $\mu_{\alpha}$ and $\mu_{\beta}.$ When this condition fails, a different behavior of $\mu_{\alpha} \boxtimes \mu_{\beta}$ from our current discussion may arise; for example, see \cite{KLP,Lee2013ExtremalEA}. Assumption (iv) ensures that $\mu_{A}$ and $\mu_B$ converge respectively to $\mu_{\alpha}$ and $\mu_\beta$ sufficiently fast down to a local scale.  Finally, we may localize all of Assumption \ref{assu_limit} to the upper edge and weaken (\ref{eq_epsilondefinition}) as far as eigenvalues and eigenvectors around the upper edge are concerned; see Remark \ref{rem_localize} for details. \normalcolor

The central results of Voiculescu in \cite{Voiculescu1987,Voiculescu1991} implies that $\mu_{H}$ converges weakly to a deterministic measure under Assumption \ref{assu_limit}, denoted as $\mu_{\alpha}\boxtimes\mu_{\beta}$. It is called the \emph{free multiplicative convolution} of $\mu_{\alpha}$ and $\mu_{\beta}$.
In the present paper, we use the $\mathsf{M}$-transform in (\ref{eq_mtrasindenity}) and associated subordination functions to define the free multiplicative convolution.\normalcolor
%
%
\begin{lem}[Proposition 2.5 of \cite{JHC}]\label{lem_subor}
	For Borel probability measures $\mu_{\alpha}$ and $\mu_{\beta}$ on $\R_{+}$, there exist unique analytic functions $\Omega_{\alpha},\Omega_{\beta}:\CR\to\CR$ satisfying the following: \\
\noindent{(1).} {For all $z\in\C_{+}$, we have $\Omega_{\alpha}(z),\Omega_{\beta}(z)\in\C_{+}$, $\Omega_{\alpha}(\ol{z})=\ol{\Omega_{\alpha}(z)}$, $\Omega_{\beta}(\ol{z})=\ol{\Omega_{\beta}(z)}$,}
		\beq\label{eq_subsys3}
		\arg \Omega_{\alpha}(z)\geq \arg z, \AND \arg\Omega_{\beta}(z)\geq \arg z.
		\eeq
\noindent{(2).}We have
		\beq \label{eq_subsys2}
		\lim_{x\searrow-\infty}\Omega_{\alpha}(x)=\lim_{x\searrow-\infty}\Omega_{\beta}(x)=-\infty.
		\eeq
		
\noindent{(3).} For all $z\in\CR$, we have 
		\beq\label{eq_suborsystem}
		zM_{\mu_{\alpha}}(\Omega_{\beta}(z))=zM_{\mu_{\beta}}(\Omega_{\alpha}(z))=\Omega_{\alpha}(z)\Omega_{\beta}(z).
		\eeq
\end{lem}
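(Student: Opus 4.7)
The plan is to produce the pair $(\Omega_\alpha,\Omega_\beta)$ as the (unique) fixed point of a holomorphic self-map of $\mathbb{C}_+\times\mathbb{C}_+$ parametrized by $z\in\mathbb{C}_+$, check joint analyticity in $z$, and then extend to $\mathbb{C}\setminus\mathbb{R}_+$ by Schwarz reflection. The key analytic input that I would establish first is that, for any Borel probability measure $\mu$ on $(0,\infty)$, the transform $M_{\mu}$ defined in \eqref{eq_mtrasindenity} sends $\mathbb{C}_+$ into $\mathbb{C}_+$ with the Pick-type bound $\arg M_{\mu}(w)\geq \arg w$ for $w\in\mathbb{C}_+$. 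This is immediate from the rewriting $1+wm_{\mu}(w)=\int x/(x-w)\,\dd\mu(x)$ combined with positivity of $\mu$ on $(0,\infty)$. This Pick-type structure is what drives both the existence argument and the argument inequalities in item (1).

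Next, solving the first identity in \eqref{eq_suborsystem} for $\Omega_\alpha$ gives $\Omega_\alpha=zM_{\mu_\alpha}(\Omega_\beta)/\Omega_\beta$, and substituting into the second identity reduces the system to a scalar fixed-point equation $\omega=T_z(\omega)$ on $\mathbb{C}_+$, where $T_z(\omega):=zM_{\mu_\beta}\bigl(zM_{\mu_\alpha}(\omega)/\omega\bigr)/M_{\mu_\alpha}(\omega)$. Using the Pick-type bounds on $M_{\mu_\alpha}$ and $M_{\mu_\beta}$, one verifies that $T_z:\mathbb{C}_+\to\mathbb{C}_+$ is holomorphic and, provided that neither $\mu_\alpha$ nor $\mu_\beta$ is a degenerate point mass, not a parabolic/elliptic automorphism. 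The Denjoy--Wolff theorem then produces a unique attracting fixed point $\Omega_\beta(z)\in\mathbb{C}_+$ of $T_z$, and I set $\Omega_\alpha(z):=zM_{\mu_\alpha}(\Omega_\beta(z))/\Omega_\beta(z)$. Joint analyticity of $z\mapsto(\Omega_\alpha(z),\Omega_\beta(z))$ follows from a normal-families/Hurwitz argument applied to iterates of $T_z$ (alternatively via the holomorphic implicit function theorem), and Schwarz reflection extends both functions to $\mathbb{C}\setminus\mathbb{R}_+$ and yields the conjugation identities in (1).

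Finally, I would verify the remaining items. The argument inequalities $\arg\Omega_{\alpha}(z),\arg\Omega_{\beta}(z)\geq\arg z$ in (1) follow by taking arguments in \eqref{eq_suborsystem} and applying the Pick-type bound $\arg M_{\mu}(w)\geq\arg w$ to the right-hand sides. For (2), on $(-\infty,0)$ the function $M_{\mu}(x)$ is real-valued and satisfies $M_{\mu}(x)/|x|\to 0$ as $x\to-\infty$, so combining \eqref{eq_suborsystem} with the sign structure of $M_\mu$ on the negative axis forces $\Omega_{\alpha}(x),\Omega_{\beta}(x)\to-\infty$. Uniqueness is inherited from Denjoy--Wolff: any other pair $(\wt\Omega_\alpha,\wt\Omega_\beta)$ satisfying (1)--(3) gives a fixed point of $T_z$ lying in $\mathbb{C}_+$, which must coincide with the attracting Denjoy--Wolff fixed point, whence $\wt\Omega_\alpha=\Omega_\alpha$ via the defining relation. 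The main obstacle is verifying that $T_z$ is genuinely a non-automorphic self-map of $\mathbb{C}_+$ with the strict hyperbolic-contraction properties needed for Denjoy--Wolff: this requires carefully unpacking the composition of two $M$-transforms and ruling out degenerate cases using the non-degeneracy of $\mu_\alpha,\mu_\beta$ guaranteed by Assumption \ref{assu_limit}.
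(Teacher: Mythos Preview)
The paper does not give its own proof of this lemma: it is quoted as Proposition~2.5 of \cite{JHC}, and the paragraph immediately following it attributes the original result to \cite{Belinschi-Bercovici2007} (in the $\eta$-transform formulation $\eta(z)=1/M(1/z)$) and to \cite{Chistyakov-Gotze2011}. So there is nothing in the present paper to compare your argument against line by line.

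That said, your Denjoy--Wolff fixed-point strategy is exactly the approach of \cite{Belinschi-Bercovici2007}: one builds a holomorphic self-map of $\mathbb{C}_+$ from the composition of the two ``moment'' transforms, invokes Denjoy--Wolff to obtain a unique attracting fixed point depending holomorphically on $z$, and reads off the subordination pair. Two points are worth tightening. First, the Pick-type inequality you state, $\arg M_\mu(w)\geq\arg w$, is the correct ingredient but is not quite ``immediate'' from the rewriting $1+wm_\mu(w)=\int x/(x-w)\,\dd\mu(x)$; that identity only gives $M_\mu(\mathbb{C}_+)\subset\mathbb{C}_+$, and the full argument inequality needs a short additional computation (in \cite{Belinschi-Bercovici2007} this is phrased for $\eta_\mu$ and then transferred). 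Second, the lemma is stated for arbitrary Borel probability measures on $\R_+$, not under Assumption~\ref{assu_limit}, so you should not rely on Assumption~\ref{assu_limit} to exclude point masses; the degenerate cases ($\mu_\alpha$ or $\mu_\beta$ a Dirac mass) are handled directly, since then $M_\mu$ is linear and the subordination functions are explicit. With these two adjustments your outline matches the standard proof in the cited references.
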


{We remark that Lemma \ref{lem_subor} is originally due to \cite{Belinschi-Bercovici2007}, with a slightly different notation using $\eta$-transform defined by $\eta(z)=1/M(1/z)$. Later in \cite{Chistyakov-Gotze2011}, exactly the same result as Lemma \ref{lem_subor} was proved with different notations. The aforementioned papers also considered analytic subordination functions for free additive and multiplicative convolutions on $\R$ and the circle, respectively. }

\begin{defn}\label{def:freeconv}
Denote the analytic function $M: \mathbb{C} \backslash \mathbb{R}_+ \rightarrow \mathbb{C} \backslash \mathbb{R}_+ $ by
\beq\label{eq_defn_eq}
M(z):=M_{\mu_{\alpha}}(\Omega_{\beta}(z))=M_{\mu_{\beta}}(\Omega_{\alpha}(z)).
\eeq
	The {free multiplicative convolution} of $\mu_{\alpha}$ and $\mu_{\beta}$ is defined as the unique probability measure $\mu,$ 
	denoted as $\mu \equiv \mu_{\alpha}\boxtimes\mu_{\beta}$ such that (\ref{eq_defn_eq}) holds
	 for all $z\in\CR$. In this sense, $M(z) \equiv M_{\mu_{\alpha} \boxtimes \mu_{\beta}}(z)$ is the $\mathsf{M}$-transform of $\mu_{\alpha} \boxtimes \mu_{\beta}.$ Furthermore, the analytic functions $\Omega_{\alpha}$ and $\Omega_{\beta}$  are referred to as the  subordination functions. 
Similarly we define $\Omega_{A}$ and $\Omega_{B}$ by replacing $(\alpha,\beta)$ with $(A,B)$ in Lemma \ref{lem_subor}, and define $\mu_{A}\boxtimes\mu_{B}$ so that $M_{\mu_{A}}(\Omega_{B}(z))=M_{\mu_{B}}(\Omega_{A}(z))=M_{\mu_{A}\boxtimes\mu_{B}}(z)$ for all $z\in\CR$.
\end{defn}
{
We conclude this subsection with preliminary facts on the free convolutions $\mu_{\alpha}\boxtimes\mu_{\beta},\mu_{A}\boxtimes\mu_{B}$ and their associated subordination functions, that will be required in the precise statements of our main results. In actual proofs, we also use more detailed description from Lemma \ref{prop:stabN} below. 
\begin{lem}\label{lem_prop_fc}
	Suppose that $\mu_{\alpha},\mu_{\beta},\mu_{A},$ and $\mu_{B}$ satisfy Assumption \ref{assu_limit}. 
	\begin{itemize}
		\item[(i).] The restrictions $\Omega_{\alpha}\vert_{\C_{+}},\Omega_{\beta}\vert_{\C_{+}},\Omega_{A}\vert_{\C_{+}},\Omega_{B}\vert_{\C_{+}}$ extend continuously to $[0,\infty)$, mapping into the Riemann sphere $\C\cup\{\infty\}$; we consistently use the same notations $\Omega_{\alpha},\Omega_{\beta}$, etc., to denote these extensions. Furthermore, $\Omega_{\alpha}$ and $\Omega_{\beta}$ are bounded on each compact subset of $\C_{+}\cup\R$.
		\item[(ii).] The free convolution $\mu_{\alpha}\boxtimes\mu_{\beta}$ has a continuous and bounded density $\rho$. The density $\rho$ is supported on a compact interval $[E_{-},E_{+}]$ in $(0,\infty)$ and satisfies
		\beq\label{eq_edge}
			\rho(x) \sim \sqrt{(E_{+}-x)(x-E_{-})},\qquad x\in[E_{-},E_{+}].
		\eeq
Here $E_+:=\sup \operatorname{supp} \mu_\alpha \boxtimes \mu_\beta.$		
		\item[(iii).] There exist positive constants $C_{\alpha}$ and $C_{\beta}$ such that, for each fixed neighborhood $U$ of $E_{+}$, 
		\begin{equation}\label{eq_localquadtratic}
			\Omega_{\alpha(\beta)}(z)=\Omega_{\alpha(\beta)}(E_+)+C_{\alpha(\beta)} \sqrt{z-E_+}+\rO(|z-E_+|^{3/2}), \qquad z\in U.
		\end{equation}
		
		\item[(iv).] We have
		\beqs
		\dist(\Omega_{\alpha}(\CR),\supp\mu_{\beta})>0,\qquad \dist(\Omega_{\beta}(\CR),\supp\mu_{\alpha})>0.
		\eeqs
		
		\item[(v).] The maps $\Omega_{\alpha},\Omega_{\beta}$ are real analytic and strictly increasing on $(E_{+},\infty)$, and satisfy 
		\beqs
			\lim_{x\to\infty,x\in\R}\Omega_{\alpha}(x)=\infty=\lim_{x\to\infty,x\in\R}\Omega_{\beta}(x).
		\eeqs The same results hold true for $\Omega_{A},\Omega_{B}$ if we replace $E_{+}$ by
		\beqs
			\wh{E}_{+}\deq \sup\supp\mu_{A}\boxtimes\mu_{B}.
		\eeqs
		
		\item[(vi).] For any fixed $\epsilon>0$, we have
			\beqs
				E_{+}-N^{-2/3+\epsilon}\leq\wh{E}_{+}\leq E_{+}+N^{-1+\epsilon}.
			\eeqs
	\end{itemize}
\end{lem}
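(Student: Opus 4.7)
The plan is to treat (i)--(v) as consequences of the authors' earlier analysis of the free multiplicative convolution in \cite{JHC,DJ1} together with classical analytic subordination theory, and to prove (vi) by a quantitative perturbation argument comparing the subordination systems for $(\mu_\alpha,\mu_\beta)$ and $(\mu_A,\mu_B)$.

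For (i), I would use that $\Omega_\alpha,\Omega_\beta$ are Nevanlinna--Pick functions mapping $\C_+\to\C_+$ (Lemma \ref{lem_subor}) and that under Assumption \ref{assu_limit}(iii) their imaginary parts remain bounded as $z\to\R_+$, so they extend continuously to $[0,\infty)$ with values in $\C\cup\{\infty\}$; boundedness on compacta of $\C_+\cup\R$ then follows from the Herglotz representation together with the compactness of $\supp\mu_\alpha,\supp\mu_\beta$. Part (ii), the square-root edge regularity of $\mu_\alpha\boxtimes\mu_\beta$, should be a direct application of the edge analysis in \cite{JHC} under this same assumption set; (iii) then follows from (ii) by implicit differentiation of the subordination identity \eqref{eq_suborsystem} at $z=E_+$: the linearization of that system degenerates with exactly a square-root singularity, forcing the expansion \eqref{eq_localquadtratic} for both $\Omega_\alpha$ and $\Omega_\beta$. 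Parts (iv) and (v) are consequences of Lemma \ref{lem_subor}: for (iv), if $\Omega_\alpha(z_0)\in\supp\mu_\beta$ for some $z_0\in\CR$, then $M_{\mu_\beta}(\Omega_\alpha(z_0))$ would fail to be analytic into $\CR$, contradicting \eqref{eq_suborsystem}, and continuity plus compactness upgrade this to a strictly positive distance; for (v), real analyticity and strict monotonicity on $(E_+,\infty)$ are standard properties of Pick functions restricted outside the support of their spectral measure, and the limit at infinity is read off from the asymptotic $\Omega_{\alpha(\beta)}(z)=z+\rO(1)$. The analogues for $\mu_A\boxtimes\mu_B$ follow in the same way, using Assumption \ref{assu_limit}(iv)--(v) to ensure that $\mu_A,\mu_B$ inherit the necessary regularity uniformly in $N$.

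Part (vi) is the main obstacle and the only step that is genuinely quantitative. The upper bound $\wh E_+\leq E_++N^{-1+\epsilon}$ should come from a direct Stieltjes-transform comparison: Assumption \ref{assu_limit}(iv) gives Lipschitz control of the $\mathtt{M}$-transform in Levy distance away from the support, so evaluating the subordination system for $\mu_A\boxtimes\mu_B$ at $z=E_++N^{-1+\epsilon}$ produces a real solution of the subordination system with the correct sign structure, which forces $\wh E_+$ to lie in an $N^{-1+\epsilon}$-neighborhood of $E_+$ from above. The lower bound $\wh E_+\geq E_+-N^{-2/3+\epsilon}$ reflects the square-root singularity from (ii)--(iii): at a distance $\eta$ above $E_+$ the linearized subordination operator has inverse of order $\eta^{-1/2}$, so a perturbation of the input measures by $N^{-1+\epsilon}$ in Levy distance can in principle shift the edge inward by a distance $\eta$ solving $\sqrt{\eta}\cdot\eta\sim N^{-1+\epsilon}$, yielding $\eta\sim N^{-2/3+\epsilon}$. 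Making this precise amounts to applying the quantitative stability estimates of Lemma \ref{prop:stabN} to the perturbed subordination system near $E_+$ and tracking how the square-root cancellations propagate; the asymmetry between the exponents $-1+\epsilon$ and $-2/3+\epsilon$ is exactly this square-root cancellation at the edge, and I expect this balancing to be the delicate step.
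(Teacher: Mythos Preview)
Your proposal is correct in substance and aligned with the paper's approach: every part of this lemma is a direct citation to prior work, with (i) from Proposition~4.3 and Lemma~5.2 of \cite{JHC}, (ii)--(iv) from Theorem~3.3 and Propositions~5.10, 5.6 of \cite{JHC}, and (v)--(vi) from Lemmas~A.8 and A.7 of \cite{DJ1}. In particular, (vi) is not ``the main obstacle'' here---it is already established as Lemma~A.7 of \cite{DJ1}, so your quantitative perturbation sketch, while giving the right heuristic for the asymmetric exponents, is unnecessary for the present paper; your balancing $\sqrt{\eta}\cdot\eta\sim N^{-1+\epsilon}$ correctly recovers the $N^{-2/3}$ scale and matches the mechanism behind that lemma, but the actual work lives in the cited reference rather than needing to be redone.
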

\begin{proof}
	The first statement is due to Proposition 4.3 and Lemma 5.2 of \cite{JHC}. The second to fourth statements are proved in Theorem 3.3 and Propositions 5.10 and 5.6 of \cite{JHC}, in that order. The fifth and sixth statements are immediate consequences of Lemmas A.8 and A.7 of \cite{DJ1}.
\end{proof}
}

\subsection{The model}

With the above preparation, we introduce the spiked model following the setup  of \cite{DINGRMTA,DYaos}. Throughout the paper, we consider the spikes associated with the upper edge $E_+.$ Similar discussion applies to the lower edge $E_-.$ To add a few spikes,  we assume that there exist some fixed integers $r$ and $s$ with two sequences of  positive numbers $\{d_i^a\}_{i \leq r}$ and $\{d_j^b\}_{j \leq s}$ such that $\wh{A}=\operatorname{diag}\{\wh{a}_1, \cdots, \wh{a}_N\}$ and $\wh{B}=\operatorname{diag}\{\wh{b}_1, \cdots, \wh{b}_N\},$ where 
\begin{equation}\label{eq_spikes}
\wh{a}_k=
\begin{cases}
a_k(1+d^a_k), & 1 \leq k \leq r \\
a_k, & k \geq r+1
\end{cases}, \  
\wh{b}_k=
\begin{cases}
b_k(1+d^b_k), & 1 \leq k \leq s \\
b_k, & k \geq s+1
\end{cases}.
\end{equation}
Without loss of generality, we assume that $\wh{a}_1 \geq \wh{a}_2 \cdots \geq \wh{a}_N$ and $\wh{b}_1 \geq \wh{b}_2 \cdots \geq \wh{b}_N.$ In the current paper, we assume that all the $d_k^a$'s and $d_k^b$'s are bounded. 


Let $\Omega_A(\cdot)$ and $\Omega_B(\cdot)$ be the subordination functions associated with $\mu_A$ and $\mu_B$. We will see that 
a spike $\wh{a}_i, 1 \leq i \leq r$ or $\wh{b}_j, 1 \leq j \leq s,$ causes an outlier eigenvalue, if {
\begin{equation}\label{eq_outlierlocation}
\wh{a}_i>\Omega_{\beta}(E_{+}), \ \text{or} \ \ \wh{b}_j>\Omega_{\alpha}(E_{+}). 
\end{equation} }
More precisely, we will use the following assumption. 
\begin{assu}\label{assum_outlier} We assume that (\ref{eq_outlierlocation}) holds for all $1 \leq i \leq r$ and $1 \leq j \leq s.$ Moreover, we define the integers $0 \leq r^+\leq r$ and $0 \leq s^+\leq s$ by {
\begin{align}\label{eq_rsplusedefn}
	r^{+}\deq \max\{ 1 \leq i \leq r:\wh{a}_{i}\geq \Omega_{\beta}(E_{+})+N^{-1/3}\},  \ \
	s^{+}\deq \max\{1 \leq j \leq s:\wh{b}_{j}\geq \Omega_{\alpha}(E_{+})+N^{-1/3}\},
\end{align} }
and take their value to be zero when the index sets are empty. The lower bound $N^{-1/3}$ is chosen for definiteness, and it can be replaced
with any $N$-dependent parameter that is of the same order.  
\end{assu}

\begin{rem}\label{rem_r+}
A spike $\wh{a}_i$ or $\wh{b}_j$ that does not satisfy the conditions in Assumption \ref{assum_outlier} will cause an outlier eigenvalue that lies within an $\rO(N^{-2/3})$ neighborhood of the edge $E_+.$ In this sense, it will be hard to detect such a spike as $\rO(N^{-2/3})$ exactly matches the scale of eigenvalue spacings around the edge in the non-spiked model as demonstrated in Lemma \ref{thm_rigidity}. In this sense, Assumption \ref{assum_outlier} simply chooses the actual spikes. In the statistical literature, this is referred to as the supercritical regime and a reliable detection of the spikes is only available in this regime. We refer the readers to \cite{BDWW,Bloemendal2016, DYaos,perry2018} for more detailed discussion.    
\end{rem}


Throughout the paper, for the ease of discussion, we will consistently use the following notations
\begin{equation}\label{matrix_defn_defn_null}
\mathcal{Q}_1 \equiv \widetilde{H}:=A^{1/2}UBU^* A^{1/2},
\end{equation}
and for $\widehat{Y}$ is defined in (\ref{eq_defndatamatrixtype}) {
\begin{equation}\label{matrixnotation}
\widehat{\mathcal{Q}}_1:=\widehat{Y} \widehat{Y}^* \equiv \widehat{A}^{1/2} U \widehat{B} U^* \widehat{A}^{1/2},  \ \ \widehat{\mathcal{Q}}_2:=\widehat{Y}^* \widehat{Y} \equiv \widehat{B}^{1/2} U^* \widehat{A} U \widehat{B}^{1/2}. 
\end{equation} }
\begin{rem}
We provide a remark on how to estimate the quantities in (\ref{eq_outlierlocation}). We use $E_+$ and $\Omega_\beta(E_+)$ for examples.  First, according to (\ref{eq_nnnnnnlilililililili}) and (\ref{eq_stickingbound}) below, $E_+$ can be efficiently estimated using the largest eigenvalue of $\mathcal{Q}_1,$ denoted as $\lambda_1(\mathcal{Q}_1),$ or the first non-outlier eigenvalue of $\widehat{\mathcal{Q}}_1.$ Second, according Lemmas A.3 and A.7 and (C.34) of \cite{DJ1}, we find that $|\Omega_{B}^{c}(\wh{z})-\Omega_{\beta}(E_{+})|\prec N^{-1/3},$ where for some sufficiently small constant $\epsilon>0,$ we denote 
	\beqs
	\Omega_{B}^{c}(\wh{z})\deq \frac{\wh{z}\tr A\wt{G}(\wh{z})}{1+\wh{z}\tr \wt{G}(\wh{z})},\qquad \text{where} \ \ \wh{z}=\lambda_{1}(\caQ_{1})+\ii N^{-2/3+\epsilon} \ \ \text{and} \ \ \wt{G}(z)\deq (\wt{H}-z)^{-1}.
	\eeqs
This shows that $\operatorname{Re} \Omega_{B}^{c}(\wh{z})$ can serve as a consistent estimator for $\Omega_{\beta}(E_{+}).$ 
\end{rem}\normalcolor

\section{Main results}\label{sec_mainresultshere}
In this section, we state the main results. Throughout the paper, we will consistently use the notion of \emph{stochastic domination}, which was firstly introduced in \cite{MR3119922}. It simplifies the presentation by systematizing the statements of the form "$X_N$ is bounded by $Y_N$ with high probability up to a small power of $N$."   
\begin{defn} For two sequences of random variables $\{X_N\}_{N \in \mathbb{N}}$ and $\{Y_N\}_{N \in \mathbb{N}},$ we say that $X_N$ is stochastically dominated by $Y_N,$ written as $X_N  \prec Y_N$ or $X_N =\rO_{\prec}(Y_N),$ if for all (small) $\epsilon>0$ and (large) $D>0,$ we have 
\begin{equation*}
\mathbb{P}\left( |X_N| \geq N^{\epsilon} |Y_N| \right) \leq N^{-D},
\end{equation*}
for sufficiently large $N \geq N_0(\epsilon, D).$ If $X_N(\upsilon)$ and $Y_N(\upsilon)$ depend on a common parameter $\upsilon,$ we say $X_N \prec Y_N$ uniformly in $\upsilon$ if the threshold $N_0(\epsilon, D)$ can be chosen independently of the parameter $\upsilon.$ Moreover, we say an event $\Xi$ holds with high probability if for any constant $D>0,$ $\mathbb{P}(\Xi) \geq 1-N^{-D}$ for large enough $N.$ 
\end{defn}

\subsection{Eigenvalue statistics}\label{sec_spikedmodel}
In this subsection, we state our results regarding the eigenvalue statistics. The statements often involve the inverse functions $\Omega_{A}^{-1}$ and $\Omega_{B}^{-1}$; since $\Omega_{A}$ and $\Omega_{B}$ are monotone increasing and real analytic on $(\wh{E}_{+},\infty)$ by Lemma \ref{lem_prop_fc}, we denote by $\Omega_A^{-1}(\cdot)$ and $\Omega_B^{-1}(\cdot)$ as the inverse functions of $\Omega_A$ and $\Omega_B$ on complex neighborhoods of $(\Omega_{A}(\wh{E}_{+}), \infty)$ and $(\Omega_{B}(\wh{E}_{+}), \infty),$ respectively. When $x<\Omega_{A}(\wh{E}_{+})$ and $y<\Omega_{B}(\wh{E}_{+})$, we use the conventions $\Omega_{A}^{-1}(x)=E_{+}$ and $\Omega_{B}^{-1}(y)=E_{+}$. \normalcolor  For the ease of statements, we introduce the following re-labeling for the eigenvalues of $\wh{\mathcal{Q}}_{i}$'s as in \cite[Definition 3.5]{DYaos}. \normalcolor

\begin{defn} We define the labeling functions $\pi_a,\pi_{b}:\llbra1,N\rrbra\to\llbra1,N\rrbra$ as follows. For any $1\leq i \leq r$, we assign to it a label $\pi_a(i)\in \{1,\cdots, r+s\}$ if $\Omega_B^{-1}(\wh{a}_i)$ is the $\pi_a(i)$-th largest element in $\{\Omega_B^{-1}(\wh{a}_i)\}_{i=1}^r \cup  \{\Omega^{-1}_{A}(\wh{b}_j)\}_{j=1}^s$. We also assign to any $1\leq j \leq s$ a label $\pi_b(j)\in \{1,\cdots, r+s\}$ in a similar way. Moreover, we define $\pi_a(i)=\min(i+s,N)$ if $i>r$ and similarly for $\pi_{b}(j)$.
We define the following sets of outlier indices:
\begin{align*}
& \mathcal O:= \{\pi_a(i): 1\le i \le r\}\cup \{\pi_b(j): 1\le j \le s\}, 
\end{align*}
and
\begin{align*}
& \mathcal O^+:= \{\pi_a(i): 1\le i \le r^+\}\cup \{\pi_b(j): 1\le j \le s^+\},
\end{align*}
{where we recall the definitions of $r^+$ and $s^+$ in (\ref{eq_rsplusedefn}). }
\end{defn}

First, we state the results on the convergence limits and rates for the outliers and the first few non-outlier eigenvalues. Recall (\ref{matrixnotation}). Denote the eigenvalues of $\wh{\mathcal{Q}}_1$ and $\wh{\mathcal{Q}}_2$ as $\hlam_1 \geq \hlam_2 \geq \cdots \geq \hlam_N.$ 
\begin{thm}[Outlier and extremal non-outlier eigenvalues]\label{thm_outlier} Suppose Assumptions \ref{assu_limit} and \ref{assum_outlier} hold. Then we have that 
\begin{equation*}
\left| \hlam_{\pi_a(i)}-\Omega_B^{-1}(\wh{a}_i) \right| \prec N^{-1/2}(\wh{a}_i-\Omega_{\beta}(E_{+}))^{1/2}, \ 1 \leq i \leq r^+,
\end{equation*}
and 
\begin{equation*}
\left| \hlam_{\pi_b(j)}-\Omega^{-1}_A(\wh{b}_j) \right| \prec N^{-1/2}(\wh{b}_j-\Omega_{\alpha}(E_{+}))^{1/2}, \ 1 \leq j \leq s^+.
\end{equation*}
Moreover, for any fixed integer $\varpi>r+s,$ we have 
\begin{equation}\label{eq_nnnnnnlilililililili}
\left|\hlam_i-E_+ \right| \prec N^{-2/3}, \ \text{for} \ i \notin \mathcal{O}^+ \ \text{and} \ i \leq \varpi.  
\end{equation} 
\end{thm}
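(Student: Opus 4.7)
The approach is the perturbative strategy of \cite{Bloemendal2016,DYaos,KY13}: reduce the spiked eigenvalue problem to a low-rank determinantal equation via a Sylvester/Schur identity, then apply the entry-wise and anisotropic local laws (Theorem \ref{prop_linearlocallaw}) for the unspiked resolvent $\widetilde{G}(z)=(\widetilde{H}-z)^{-1}$ to pass to a deterministic equation whose roots can be read off from the subordination functions $\Omega_{A},\Omega_{B}$. Concretely, write $\widehat{A}^{1/2}=A^{1/2}+E_{a}$ and $\widehat{B}^{1/2}=B^{1/2}+E_{b}$, where $E_{a},E_{b}$ are diagonal, supported on the first $r$ and $s$ coordinates respectively, hence of rank at most $r$ and $s$. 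Expanding $\widehat{Y}\widehat{Y}\adj$ exhibits $\widehat{\caQ}_{1}-\caQ_{1}$ as a rank-$\rO(r+s)$ matrix; factoring it as $LR\adj$ and applying Sylvester's identity, any $z\notin\mathrm{spec}(\caQ_{1})$ is an eigenvalue of $\widehat{\caQ}_{1}$ iff $\det\caM_{N}(z)=0$, where $\caM_{N}(z)\deq I+R\adj\widetilde{G}(z)L$ has dimension $\rO(r+s)$ and its entries are bilinear forms in columns of $L,R$ supported on (or derived from) the spike indices, with coefficients polynomial in $d^{a}_{k},d^{b}_{\ell}$.

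The entry-wise/anisotropic local laws applied on $\{z:|z-E_{+}|\geq N^{-2/3+\epsilon}\}$ replace each random entry of $\caM_{N}(z)$ by a deterministic counterpart which, by the subordination identity \eqref{eq_suborsystem}, depends on $z$ only through $\Omega_{A}(z),\Omega_{B}(z),M(z)$. A direct computation then shows that the deterministic matrix has determinant proportional to
\begin{equation*}
\prod_{i=1}^{r}\bigl(\widehat{a}_{i}-\Omega_{B}(z)\bigr)\prod_{j=1}^{s}\bigl(\widehat{b}_{j}-\Omega_{A}(z)\bigr),
\end{equation*}
whose real roots outside $[E_{-},\wh{E}_{+}]$ are precisely $\{\Omega_{B}^{-1}(\widehat{a}_{i})\}_{1\leq i\leq r^{+}}\cup\{\Omega_{A}^{-1}(\widehat{b}_{j})\}_{1\leq j\leq s^{+}}$, the classical outlier locations.

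To extract the announced convergence rates, set $x_{i}\deq\Omega_{B}^{-1}(\widehat{a}_{i})$ for $1\leq i\leq r^{+}$. By \eqref{eq_localquadtratic} and monotonicity of $\Omega_{B}$ on $(\wh{E}_{+},\infty)$,
\begin{equation*}
x_{i}-E_{+}\sim(\widehat{a}_{i}-\Omega_{\beta}(E_{+}))^{2},\qquad \Omega_{B}'(x_{i})\sim(\widehat{a}_{i}-\Omega_{\beta}(E_{+}))^{-1}.
\end{equation*}
Choosing $z=x_{i}+\ii\eta$ with $\eta\sim N^{-1/2}(\widehat{a}_{i}-\Omega_{\beta}(E_{+}))^{-1/2}$ balances the local-law fluctuation $(N\eta)^{-1/2}|z-E_{+}|^{-1/4}$ against the deterministic gap; a stability/implicit-function argument then produces a zero of $\det\caM_{N}$ within $\rO_{\prec}(N^{-1/2}(\widehat{a}_{i}-\Omega_{\beta}(E_{+}))^{1/2})$ of $x_{i}$. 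A continuity argument in the spike strength $d^{a}_{i}\mapsto td^{a}_{i}$, $t\in[0,1]$, together with monotonicity of $\Omega_{B}^{-1}$, identifies this zero with $\widehat{\lambda}_{\pi_{a}(i)}$; the symmetric argument handles the $B$-spikes. For the non-outlier part \eqref{eq_nnnnnnlilililililili}, since $\widehat{\caQ}_{1}-\caQ_{1}$ has rank $\rO(r+s)$, Cauchy interlacing sandwiches $\widehat{\lambda}_{i}$ (for $i\leq\varpi$, $i\notin\caO^{+}$) between two eigenvalues $\lambda_{i\pm\rO(r+s)}(\caQ_{1})$ that lie within $\rO_{\prec}(N^{-2/3})$ of $E_{+}$ by edge rigidity (Lemma \ref{thm_rigidity}).

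The principal obstacle is the near-critical regime $\widehat{a}_{i}-\Omega_{\beta}(E_{+})\sim N^{-1/3}$: there the deterministic gap $x_{i}-E_{+}$ shrinks to $N^{-2/3}$, the local-law error blows up as $|z-E_{+}|^{-1/4}$, and the inverse Jacobian $1/\Omega_{B}'(x_{i})$ is of comparable size, so the replacement of $\caM_{N}$ by its deterministic counterpart must be carried out with optimal quantitative precision uniformly down to the edge. Handling degenerate spikes (several coinciding $\widehat{a}_{i}$) additionally requires a block implicit-function argument on the restriction of $\caM_{N}$ to the degenerate subspace, mirroring the treatment in \cite{DYaos}.
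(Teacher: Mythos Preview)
Your plan is essentially the paper's own strategy: a Sylvester/Woodbury identity reduces the spiked eigenvalue problem to a low-dimensional determinant, the local laws replace it by the deterministic product $\prod_i(\widehat a_i-\Omega_B(z))\prod_j(\widehat b_j-\Omega_A(z))$, and a stability plus continuity argument localizes the outliers with the claimed rate. The paper packages the stability step as a ``permissible region'' lemma followed by Rouch\'e's theorem on a special configuration, then interpolates to the general configuration; your implicit-function description is the same mechanism. One small imprecision: outside the spectrum the relevant local-law error is $N^{-1/2}(\kappa+\eta)^{-1/4}$ (Theorem~\ref{prop_linearlocallaw}, \eqref{locallaw_outlierentrywise}) rather than $(N\eta)^{-1/2}|z-E_+|^{-1/4}$, and the argument is run on the real axis, so there is no need to introduce an auxiliary $\eta$.

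There is one genuine gap in your non-outlier argument. Cauchy interlacing gives $\lambda_i\le\widehat\lambda_i\le\lambda_{i-r-s}$, but for $r^++s^+<i\le r+s$ (which occurs whenever some spikes are subcritical, $r^+<r$ or $s^+<s$) the upper index $i-r-s$ is nonpositive and the interlacing bound is vacuous from above. Rigidity of $\caQ_1$ alone cannot close this, since nothing yet excludes a subcritical spike from producing an eigenvalue of $\widehat\caQ_1$ far above $E_+$. The fix is exactly the counting output of your outlier step: once you have shown that \emph{every} eigenvalue outside $[0,E_++N^{-2/3+\epsilon}]$ lies in one of the permissible intervals around the $r^++s^+$ classical outlier locations, and that each such interval contains the correct number of eigenvalues, it follows by exclusion that $\widehat\lambda_{r^++s^++1}\le E_++N^{-2/3+\epsilon}$, which supplies the missing upper bound. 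The paper does this in its Step~4, combining the permissible-region lemma with the counting from Steps~2--3 before invoking interlacing for the lower bound.
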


Theorem \ref{thm_outlier} offers the {concentration} bounds for the locations of the outlier and first few extremal non-outlier eigenvalues. It shows that the convergence rates of the outliers change from the order of $N^{-1/2}(\wh{a}_i-\Omega_{\beta}(E_{+}))^{1/2}$ or  $N^{-1/2}(\wh{b}_j-\Omega_{\alpha}(E_{+}))^{1/2}$ to $N^{-2/3}$ once $\wh{a}_i-\Omega_{\beta}(E_{+})$ or  $\wh{b}_j-\Omega_{\alpha}(E_{+})$ cross the scale $N^{-1/3},$ as opposed to Assumption \ref{assum_outlier}. This indicates the occurrence of  BBP transition \cite{BBP}.  Moreover, we believe that under some spectral gap assumptions as in \cite{MR2451053,MR2887686,BBP,BDWW,Bloemendal2016,9779233}, the  distributions of the outlier eigenvalues can also be studied.  We will pursue this direction in future works. 

\begin{rem}\label{rem_eigenvaluesitkcing}
We remark that the locations, in other words, convergence limits of the outlier eigenvalues have been obtained without Assumption \ref{assu_limit} in \cite[Section 2.2]{outliermodel} for the spiked unitarily invariant model. However, in \cite{outliermodel}, the conditions in Assumption \ref{assum_outlier} are stronger in the sense that
\begin{equation}\label{eq_strongone}
\wh{a}_i \geq \Omega_{\beta}(E_{+})+\varsigma \ \text{if and only if} \ 1 \leq i \leq r,
\end{equation}
and 
\begin{equation}\label{eq_strongtwo}
\wh{b}_j \geq \Omega_{\alpha}(E_{+})+\varsigma \ \text{if and only if} \ 1 \leq j \leq s,
\end{equation} 
where $\varsigma>0$ is some fixed constant. Compared to \cite{outliermodel}, we extend the results on the outlier eigenvalues in the following aspects: We consider spikes on a finer scale in Assumption \ref{assum_outlier}, establish their convergent rates, and provide the results for the extremal non-outlier eigenvalues. We believe that Assumption \ref{assum_outlier} is the most general assumption possible for the existence of the outliers, and the convergent rates obtained here are optimal up to an $N^{\epsilon}$ factor, where $\epsilon>0$ can be any (small) constant. 

\end{rem}

{
\begin{rem}
	On macroscopic level, \cite{outliermodel} covers more general settings assuming (\ref{eq_strongone}) and (\ref{eq_strongtwo}). Firstly, the limiting measures $\mu_{\alpha}$ and $\mu_{\beta}$ are only assumed to be non-trivial and compactly supported. In this general setting the resulting set of outliers is (see \cite[Theorem 2.2]{outliermodel})
	\beq
		\left(\bigcup_{1\leq i\leq r} \Omega_{B}^{-1}(\{\wh{a}_{i}\})
		\cup\bigcup_{1\leq j\leq s} \Omega_{A}^{-1}(\{\wh{b}_{j}\})\right)
		\setminus \supp (\mu_{\alpha}\boxtimes\mu_{\beta}),
	\eeq
	which is consistent with our Theorem \ref{thm_outlier}. It should be noted that, under Assumption \ref{assu_limit}, each set $\Omega_{B}^{-1}(\{a_{i}\})$ necessarily has at most one element. In other words, each $\wh{a}_{i}$ or $\wh{b}_{j}$ gives rise to at most one outlier. To see this, recall the following three facts from Lemma \ref{lem_prop_fc}; (i) $\Omega_{\beta}$ is an increasing real analytic function on $\R\setminus\supp(\mu_{\alpha}\boxtimes\mu_{\beta})$ with $\Omega_{\beta}(\pm\infty)=\pm\infty$, (ii) $\im\Omega_{\beta}>0$ in the interior of $\supp(\mu_{\alpha}\boxtimes\mu_{\beta})$, and (iii) $\Omega_{\beta}(E_{+})> E_{+}^{\beta}$ and $\Omega_{\beta}(E_{-})<E_{-}^{\beta}$. From these facts and that $\mu_{\alpha}\boxtimes\mu_{\beta}$ is supported on a single interval, we easily find that each of $\Omega_{\beta}^{-1}(\wh{a}_{i})$ has at most one element. 
	
	However, for general $\mu_{\alpha}$ and $\mu_{\beta}$, a single spike $\wh{a}_{i}$ or $\wh{b}_{j}$ may result in several outliers; see \cite[Examples 2.3 and 2.4]{outliermodel} for concrete examples. We believe that the three facts above extend to fairly general $\mu_{\alpha}$ and $\mu_{\beta}$, but their free convolution may have disconnected support, in particular if either $\mu_{\alpha}$ or $\mu_{\beta}$ does. In this case, each interval between connected components of $\supp(\mu_{\alpha}\boxtimes\mu_{\beta})$ may contain a solution $x$ of the equation $\Omega_{B}(x)=\wh{a}_{i}$, in contrast to our case where the support is connected. 
	
Finally, we point out that \cite{outliermodel} also studied the additive and circular models, namely $A+UBU\adj$ and $AUBU\adj$ for Hermitian and unitary matrices $A,B$, respectively. We believe the counterpart of our results for the additive model can be proved analogously, since the two major inputs, the square-root decay of $\mu_{\alpha} \boxplus \mu_\beta$ near the edges and the optimal edge local laws, have already been proved in \cite{BEC}. As for the circular model, neither of the two inputs is known to our knowledge; the latest result on the free multiplicative convolution on the circle is \cite{Belinschi-Bercovici-Ho2022}, which shows absolute continuity. We will pursue this direction in the future works. 
\end{rem}
}

Second, we study the non-outlier eigenvalues of $\widehat{\mathcal{Q}}_1$ for $i>r^++s^+.$ These eigenvalues are governed by the \emph{eigenvalue sticking} property, which states that the non-outlier eigenvalues of $\widehat{\mathcal{Q}}_1$ "stick"  with those of $\mathcal{Q}_1$ defined in (\ref{matrixnotation}) with high probability. Recall that we denote the eigenvalues of $\mathcal{Q}_1$ as $\lambda_1 \geq \lambda_2 \geq \cdots \geq \lambda_N.$

\begin{thm}[Eigenvalue sticking]\label{thm_eigenvaluesticking} Suppose Assumptions \ref{assu_limit} and \ref{assum_outlier} hold. Define 
\begin{equation}\label{eq_defngamma}
\gamma:=\min\left\{ \min_i|\widehat{a}_i-\Omega_{\beta}(E_{+})|, \ \min_j|\widehat{b}_j-\Omega_{\alpha}(E_{+})| \right\}.
\end{equation}
Fix any small constant $\tau>0.$ We have that
\begin{equation}\label{eq_stickingbound}
\left| \widehat{\lambda}_{i+r^++s^+}-\lambda_i \right| \prec \frac{1}{N \gamma}, \ 1 \leq i \leq \tau N. 
\end{equation} 
\end{thm}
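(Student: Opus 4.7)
The plan is to reduce the eigenvalue equation for $\widehat{\caQ}_1$ to a low-dimensional master equation involving the resolvent $\widetilde{G}(z)=(\widetilde{H}-z)^{-1}$ of the unspiked model, and then use a Rouch\'e-type counting argument — powered by the local laws for $\widetilde{G}$ (Theorem \ref{prop_linearlocallaw}) and bulk eigenvalue rigidity — to locate each non-outlier eigenvalue of $\widehat{\caQ}_1$ near the corresponding $\lambda_i$.

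\textbf{Step 1 (Master equation).} I would decompose $\widehat{A}=A+P_AD_AP_A^\top$ and $\widehat{B}=B+P_BD_BP_B^\top$, where $P_A\in\R^{N\times r}$ and $P_B\in\R^{N\times s}$ select the spike coordinates and $D_A,D_B$ collect the perturbation strengths. Then $\widehat{Y}=\widehat{A}^{1/2}U\widehat{B}^{1/2}$ differs from $Y=A^{1/2}UB^{1/2}$ by a rank-$(r+s)$ update supported on $\mathrm{span}(P_A,UP_B)$, so $\widehat{\caQ}_1-\widetilde{H}$ has rank at most $2(r+s)$. A standard Sylvester--Schur identity then converts $\det(\widehat{\caQ}_1-z)=0$ (for $z\notin\mathrm{spec}(\widetilde H)$) into $\det\caK(z)=0$, where $\caK(z)$ is a matrix of dimension at most $2(r+s)$ whose entries are polynomial in $D_A,D_B$ and in the finite blocks $P_A^\top\widetilde{G}(z)P_A$, $P_B^\top U^\adj\widetilde{G}(z)UP_B$, $P_A^\top\widetilde{G}(z)UP_B$, together with their natural $A^{1/2}$- or $B^{1/2}$-weighted variants.

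\textbf{Step 2 (Deterministic limit).} By the entrywise, averaged, and anisotropic versions of Theorem \ref{prop_linearlocallaw}, every block of $\caK(z)$ concentrates around a deterministic matrix $\caK^{\det}(z)$ expressible through the subordination functions $\Omega_A,\Omega_B$ of Lemma \ref{lem_prop_fc}. A direct computation then shows that, on $(E_+,\infty)$, $\det\caK^{\det}(z)=0$ precisely at the candidate outlier locations $\{\Omega_B^{-1}(\widehat a_i)\}\cup\{\Omega_A^{-1}(\widehat b_j)\}$, while for $z$ bounded away from these points $|\det\caK^{\det}(z)|$ admits a lower bound controlled from below by the gap $\gamma$ in \eqref{eq_defngamma}.

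\textbf{Step 3 (Rouch\'e and labelling).} At each non-outlier bulk eigenvalue $\lambda_i$, $\widetilde{G}(z)$ has a simple pole, so $\caK(z)$ inherits a $(z-\lambda_i)^{-1}$ singularity with rank-one residue along the spike directions. I would apply Rouch\'e's theorem on a contour $\Gamma_i$ centered at $\lambda_i$ of radius $\asymp N^\epsilon/(N\gamma)$: on $\Gamma_i$ the deterministic part dominates the stochastic fluctuation (the latter being $\rO_\prec(\Psi(z))$ for the usual local-law control parameter $\Psi$, together with the rigidity $|\lambda_i-\gamma_i|\prec N^{-1}$ in the bulk), so exactly one eigenvalue of $\widehat{\caQ}_1$ lies inside $\Gamma_i$. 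Weyl's interlacing $\widehat\lambda_{i+2(r+s)}\leq\lambda_i\leq\widehat\lambda_{\max(i-2(r+s),1)}$, combined with Theorem \ref{thm_outlier} which fixes exactly $r^++s^+$ outliers above $E_++N^{-1/3}$, forces this eigenvalue to be $\widehat\lambda_{i+r^++s^+}$.

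\textbf{Main obstacle.} The hardest step is the explicit computation of the deterministic limits of the conjugated block $P_B^\top U^\adj\widetilde{G}(z)UP_B$ and the cross block $P_A^\top\widetilde{G}(z)UP_B$; unlike in the i.i.d.\ setting of \cite{DYaos}, these limits are dictated by the free-multiplicative subordination structure and must be identified precisely in order to pin down the zero set of $\det\caK^{\det}(z)$ and obtain a sharp lower bound in terms of $\gamma$. A secondary difficulty is ensuring that the Rouch\'e argument runs uniformly in $1\leq i\leq\tau N$, which I would handle by combining the averaged local law on scales $\eta\sim N^{-1+\epsilon}$ with a Lipschitz/net argument on the spectral parameter.
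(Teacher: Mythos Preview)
Your overall framework (master equation plus local-law replacement) matches the paper, but Step~3 contains a genuine gap. The contour $\Gamma_i$ you draw around a single $\lambda_i$ has radius $\asymp N^{\epsilon}/(N\gamma)\geq N^{-1+\epsilon}$, which already exceeds the typical bulk eigenvalue spacing $\sim N^{-1}$; hence $\Gamma_i$ will generically enclose many neighboring $\lambda_j$'s, not just $\lambda_i$, and the ``rank-one residue'' picture collapses. More fundamentally, $\det\caK(z)$ inherits poles at every enclosed $\lambda_j$, whereas your deterministic comparison $\det\caK^{\det}(z)$ is smooth there; Rouch\'e's theorem compares two holomorphic functions and cannot be applied to this pair on $\Gamma_i$. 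The paper states this explicitly (Section~\ref{sec_proofstartegy}): ``Compared to the outlier eigenvalues, the second step is more complicated since Rouch\'e's theorem is not applicable.''

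The paper's fix is to abandon the one-eigenvalue-at-a-time scheme. It first proves a forbidden-region lemma (Lemma~\ref{lem_nonoutlierpermissible}): any $x$ within $[\lambda_{i-r-s-1},E_{+}+c_0N^{-2/3+2\epsilon}]$ but at distance $>N^{-1+\epsilon}\gamma^{-1}$ from $\mathrm{spec}(\caQ_1)$ is \emph{not} an eigenvalue of $\widehat\caQ_1$, because the deterministic diagonal of the master matrix is bounded below by $c\gamma$ while the local-law error is $\ll\gamma$. Then, for a fixed well-separated configuration, it partitions $\{1,\dots,N\}$ into blocks $A_k$ of eigenvalues that are pairwise within $\delta\sim N^{-1+7\epsilon/6}\gamma^{-1}$, bounds $|A_k|\lesssim N^{3\epsilon/4}$ via rigidity, and uses a perturbation argument along a continuous path (not Rouch\'e) to show $\widehat\caQ_1$ has at least $|A_k|$ eigenvalues in each block interval, with interlacing giving the matching upper bound. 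A continuity argument in the spike configuration extends this to the general case.

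A smaller point: your ``main obstacle'' is not one. The paper avoids computing $P_B^\top U^\adj\widetilde G(z)UP_B$ directly by working with the $2N\times 2N$ linearized resolvent $\mathbf G(z)$ of \eqref{eq_schurcomplement}; its deterministic limit $\Theta(z)$ in \eqref{eq_Bigthetadefinition} already gives both diagonal blocks (and the off-diagonal blocks vanish), so the master matrix $\bm{\caD}^{-1}+z\mathbf U^*\Theta(z)\mathbf U$ is diagonal with entries $(d_i^a)^{-1}-a_i/(\Omega_B(z)-a_i)$ and $(d_\mu^b)^{-1}-b_\mu/(\Omega_A(z)-b_\mu)$, from which the lower bound $\gtrsim\gamma$ is immediate.
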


Theorem \ref{thm_eigenvaluesticking} provides the convergence limits and rates for the non-outlier eigenvalues of $\widehat{\mathcal Q}_1$ in terms of those of $\mathcal Q_1.$ On one hand, if $\gamma \sim 1, $ together with the rigidity of $\mathcal{Q}_1$ established in \cite{DJ1} (or see Lemma \ref{thm_rigidity}), we conclude that the non-outlier eigenvalues of $\widehat{\mathcal{Q}}_1$ will converge to the quantiles of $\mu_{A} \boxtimes \mu_{B}$ with optimal rates. On the other hand, once $\gamma \gg N^{-1/3}$ (which is slightly stronger than the condition in Assumption \ref{assum_outlier}), for $i=\rO(1),$ we find that the right-hand side of (\ref{eq_stickingbound}) is much smaller than $N^{-2/3}$ obtained for the edge eigenvalues of $\mathcal{Q}_1$ as in Lemma \ref{thm_rigidity}. In future works, we will show that the edge eigenvalues of $\mathcal{Q}_1$ follow Tracy-Widom distribution, which immediately implies the Tracy-Widom asymptotics of the largest non-outlier eigenvalue of $\widehat{\mathcal{Q}}_1.$


\subsection{Eigenvector statistics}\label{sec_eigenvectorresults}
In this subsection, we introduce the results regarding the singular vectors, that is, the eigenvectors of $\widehat{\mathcal{Q}}_1$ and $\widehat{\mathcal{Q}}_2$ in (\ref{matrixnotation}). In what follows, we consider an index set $S$ such that  
\begin{equation}\label{eq_setscondition}
S \subset \mathcal{O}^+.
\end{equation}
For convenience, we use the following notations. For $1\le i_1 \le r^+$, $1\le  i_2 \le N$ and $1\le j \le N$, we define
 \begin{equation}\label{eq_differencedefinition}
\delta_{\pi_a(i_1), \pi_a(i_2)}^{a}:=|\wh{a}_{i_1}-\wh{a}_{i_2}|, 
\quad \delta_{\pi_a(i_1), \pi_b(j)}^{a}:=
\left| \wh{b}_j - \Omega_{A}(\Omega_B^{-1}( \wh{a}_{i_1}))\right|.
\end{equation}
Similarly, for $1\le j_1 \le s^+$, $1\le  j_2 \le N$ and $1\le i \le N$, we define
 \begin{equation*}
\delta_{\pi_b(j_1), \pi_a(i)}^{b}:=|\wh{a}_i - \Omega_B(\Omega_A^{-1}(\wh{b}_{j_1}))|, \quad \delta_{\pi_b(j_1), \pi_b(j_2)}^{b}:=|\wh{b}_{j_1}-\wh{b}_{j_2}|.
\end{equation*}
Further if $\mathfrak a\in S$, we define
\begin{equation}\label{eq_alphasinside}
\delta_{\mathfrak a}(S):=\begin{cases}\left( \min_{ k:\pi_a(k)\notin S}\delta^a_{\mathfrak a, \pi_a(k)}\right)\wedge \left( \min_{j:\pi_b(j)\notin S}\delta^a_{\mathfrak a, \pi_b(j)}\right), & \ \text{if } \mathfrak a=\pi_a(i) \in S\\
\left( \min_{k:\pi_a(k)\notin S}\delta^b_{\mathfrak a, \pi_a(k)}\right)\wedge \left( \min_{j:\pi_b(j)\notin S}\delta^b_{\mathfrak a,\pi_b(j)}\right), & \ \text{if } \mathfrak a=\pi_b(j) \in S
\end{cases};
\end{equation}
if $\mathfrak a\notin S$, then we define
\begin{equation}\label{eq_alphaoutside}
\delta_{\mathfrak a}(S):=\left( \min_{k:\pi_a(k)\in S}\delta^a_{\pi_a(k), \mathfrak a}\right)\wedge \left( \min_{j:\pi_b(j)\in S}\delta^b_{\pi_b(j), \mathfrak a}\right).
\end{equation}

With the above preparation, we proceed to state our main results on the outlier singular vectors. Denote the projections
\begin{equation*}
\mathcal{P}_S= \sum_{k \in S} \wh{\ub}_k \wh{\ub}_k^*, \ \text{and} \ \mathcal{P}'_{S}= \sum_{k \in S} \wh{\vb}_k \wh{\vb}_k^*,
\end{equation*}
where $\{\wh{\ub}_k\}$ and $\{\wh{\vb}_k\}$ are the eigenvectors of $\widehat{\mathcal{Q}}_1$ and $\widehat{\mathcal{Q}}_2$ in (\ref{matrixnotation}), respectively.

\begin{thm}[Outlier eigenvectors]\label{thm_outlieereigenvector} Suppose that Assumptions \ref{assu_limit} and \ref{assum_outlier} hold. For the set $S$ in (\ref{eq_setscondition}) and any given deterministic vector $\mathbf{v}=(v_1, \cdots, v_N)^* \in \mathbb{C}^N,$ we have that for the left singular vectors, 
\begin{equation*}
\left| \langle \mathbf{v},  \mathcal{P}_S \mathbf{v}\rangle-g_a(\mathbf{v}, S) \right| \prec \sum_{i: \pi_a(i) \in S} \frac{|v_i|^2}{\sqrt{N(\wh{a}_i-\Omega_{\beta}(E_{+}))}}+\sum_{i=1}^N \frac{|v_i|^2}{N \delta_{\pi_a(i)}(S)}+g_a(\mathbf{v},S)^{1/2} \left( \sum_{\pi_a(i) \notin S} \frac{|v_i|^2}{N \delta_{\pi_a(i)}(S)} \right)^{1/2},
\end{equation*}
where $g_a(\mathbf{v}, S)$ is defined as 
\begin{equation}\label{eq_concentrationlimit}
g_a(\mathbf{v}, S):=\sum_{i: \pi_a(i) \in S} \wh{a}_i \frac{(\Omega_B^{-1})'(\wh{a}_i)}{\Omega_B^{-1}(\wh{a}_i)}|v_i|^2. 
\end{equation}
 Similarly, for the right singular vectors, we have 
 \begin{equation*}
 \left| \langle \mathbf{v}, \mathcal{P}_S' \mathbf{v}\rangle -g_b(\mathbf{v}, S) \right| \prec \sum_{i: \pi_b(j) \in S} \frac{|v_i|^2}{\sqrt{N(\wh{b}_j-\Omega_{\alpha}(E_{+}))}}+\sum_{j=1}^N \frac{|v_j|^2}{N \delta_{\pi_b(j)}(S)}+g_b(\mathbf{v},S)^{1/2} \left( \sum_{\pi_b(j) \notin S} \frac{|v_j|^2}{N \delta_{\pi_b(j)}(S)} \right)^{1/2} ,
 \end{equation*}
 where $g_b(\mathbf{v},S)$ is defined as 
 \begin{equation*}
 g_b(\mathbf{v}, S):=\sum_{j: \pi_b(j) \in S} \wh{b}_j \frac{(\Omega_A^{-1})'(\wh{b}_j)}{\Omega_A^{-1}(\wh{b}_j)}|v_j|^2. 
 \end{equation*}
\end{thm}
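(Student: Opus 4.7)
The plan is to follow the Cauchy-integral-plus-local-law strategy used for spiked models in \cite{DYaos,Bloemendal2016,BDWW}, adapted to the multiplicative Haar setting through the local laws in Theorem \ref{prop_linearlocallaw}. I focus on the left singular vectors; the right-hand statement follows by symmetry after swapping the roles of $\widehat{A}$ and $\widehat{B}$ and working with $\widehat{\mathcal{Q}}_{2}=\widehat{Y}\adj\widehat{Y}$.

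The first step is to exploit the finite-rank nature of the spikes. Writing $\widehat{A}^{1/2}=A^{1/2}+\Delta_{a}$ and $\widehat{B}^{1/2}=B^{1/2}+\Delta_{b}$ with $\Delta_{a},\Delta_{b}$ diagonal and supported respectively on the $r$ and $s$ spike indices, one obtains
\begin{equation*}
\widehat{Y}=A^{1/2}UB^{1/2}+\Delta_{a}UB^{1/2}+A^{1/2}U\Delta_{b}+\Delta_{a}U\Delta_{b},
\end{equation*}
so that $\widehat{\mathcal{Q}}_{1}-\mathcal{Q}_{1}$ has rank at most $2(r+s)$. A Woodbury-type identity (or equivalently a small linearization block matrix) then expresses the spiked resolvent $\widehat{\mathcal{G}}(z):=(\widehat{\mathcal{Q}}_{1}-z)^{-1}$ in terms of the unspiked resolvent $\widetilde{G}(z)=(\mathcal{Q}_{1}-z)^{-1}$ through a finite, $(r+s)\times(r+s)$ meromorphic Schur complement $\mathsf{M}(z)$ whose singular locus characterizes the outlier eigenvalues, in the spirit of the master-equation analysis of \cite{DYaos}. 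For an admissible index set $S$, Cauchy's formula then gives
\begin{equation*}
\langle\vb,\mathcal{P}_{S}\vb\rangle=-\frac{1}{2\pi\ii}\oint_{\Gamma_{S}}\langle\vb,\widehat{\mathcal{G}}(z)\vb\rangle\,\dd z,
\end{equation*}
where $\Gamma_{S}$ is a contour enclosing exactly the eigenvalues $\{\widehat{\lambda}_{k}:k\in S\}$; Theorems \ref{thm_outlier} and \ref{thm_eigenvaluesticking} ensure with high probability the existence of such $\Gamma_{S}$ at distance of order $\min_{\mathfrak{a}\in S}\delta_{\mathfrak{a}}(S)$ from the rest of the spectrum.

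Substituting the Woodbury expansion into the integrand and replacing entries of $\widetilde{G}(z)$ along $\Gamma_{S}$ by their deterministic limits from Theorem \ref{prop_linearlocallaw}, which are rational in the subordination functions $\Omega_{A},\Omega_{B}$ and the Stieltjes transform of $\mu_{A}\boxtimes\mu_{B}$, reduces the problem to a deterministic contour integral. Computing the residue at $z=\Omega_{B}^{-1}(\widehat{a}_{i})$ for each spike in $S$ coming from $\widehat{A}$ produces the leading term $g_{a}(\vb,S)$: the factor $(\Omega_{B}^{-1})'(\widehat{a}_{i})/\Omega_{B}^{-1}(\widehat{a}_{i})$ is the Jacobian from the change of variables $z\mapsto\Omega_{B}(z)$ inside the residue, while the prefactor $\widehat{a}_{i}$ is inherited from the outer $\widehat{A}^{1/2}$ factors in $\widehat{\mathcal{G}}(z)$. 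The three error terms in the statement then correspond to three distinct sources: (i) the local-law fluctuation itself, which at distance $\eta\sim\widehat{a}_{i}-\Omega_{\beta}(E_{+})$ from the edge scales like $(N\eta)^{-1/2}$ and integrates to the first term; (ii) the quadratic propagation of those fluctuations through the finite master equation, yielding the $|v_{i}|^{2}/(N\delta_{\pi_{a}(i)}(S))$ contribution; and (iii) a Cauchy--Schwarz cross term between the leading and remainder pieces on the off-diagonal components, producing the factor $g_{a}(\vb,S)^{1/2}$.

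The main obstacle, in my view, is the uniform control of the master equation along $\Gamma_{S}$ when some spike $\widehat{a}_{i}$ sits close to the critical scale $\widehat{a}_{i}-\Omega_{\beta}(E_{+})\sim N^{-1/3}$. In that near-critical regime the edge local laws from Theorem \ref{prop_linearlocallaw} are at their weakest, and the contour cannot be pushed far into the upper half plane without enclosing non-outlier eigenvalues; this is exactly what forces the scaling $N^{-1/2}(\widehat{a}_{i}-\Omega_{\beta}(E_{+}))^{-1/2}$ in the first error term and requires $\Gamma_{S}$ to be chosen adaptively at distance comparable to $\widehat{a}_{i}-\Omega_{\beta}(E_{+})$ around each outlier, combined with the square-root behavior of $\Omega_{\alpha},\Omega_{\beta}$ near $E_{+}$ recorded in Lemma \ref{lem_prop_fc}. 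Degenerate spikes and mixed $\widehat{A}$- and $\widehat{B}$-type spikes inside one contour add bookkeeping, but this is already absorbed by the quantities $\delta_{\mathfrak{a}}^{a},\delta_{\mathfrak{a}}^{b}$ in (\ref{eq_differencedefinition}) and introduces no new conceptual difficulty.
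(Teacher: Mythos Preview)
Your overall approach matches the paper's: the Cauchy integral representation, the Woodbury reduction of $\widehat{\Gb}$ to a finite Schur complement in $\Gb$, the replacement of $\Gb$ by its deterministic approximation $\Theta$ via Theorem \ref{prop_linearlocallaw}, and the residue computation yielding $g_a(\vb,S)$ are exactly what the paper does (see (\ref{eq_intergralrepresentation}), (\ref{eq_keyexpansionev}), (\ref{eq_resolventexpansion}) and the decomposition $s_0+s_1+s_2$). The paper also works with the $2N\times 2N$ linearization $\Gb$ rather than $\widetilde{G}$ directly, but this is cosmetic.

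There is, however, a genuine gap in your handling of the near-critical regime. You propose to absorb the case $\widehat{a}_i-\Omega_\beta(E_+)\sim N^{-1/3}$ by choosing the contour adaptively at distance comparable to $\widehat{a}_i-\Omega_\beta(E_+)$. This does not work: at the exact threshold in Assumption \ref{assum_outlier}, the corresponding outlier sits at distance $\sim N^{-2/3}$ from $E_+$, which is the scale of the non-outlier eigenvalue fluctuations, so no contour can simultaneously enclose the outlier, exclude the bulk, and stay inside a region where (\ref{locallaw_outlierentrywise}) gives useful control. The paper resolves this not by a cleverer contour but by a genuine two-step argument: first prove Proposition \ref{prop_maineigenvector} under the strictly stronger hypothesis (\ref{condition_nonoverone}) (spikes at distance at least $N^{-1/3+\tau_1}$), and then remove (\ref{condition_nonoverone}) by splitting $S=S_0\cup S_1$ with $S_0$ the near-critical indices and bounding $\langle\mathbf{e}_i,\mathcal{P}_{S_0}\mathbf{e}_j\rangle$ directly via the non-outlier eigenvector delocalization estimate (\ref{eq_extrabound}) from Proposition \ref{prop_outliereigenvector}, rather than by any contour integral. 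That input---the delocalization bound for eigenvectors attached to barely-supercritical spikes, proved through the spectral-decomposition inequality (\ref{nonoutlier_keyexpansion})---is the missing ingredient in your proposal. A secondary point: the paper also first imposes and then removes a non-overlapping condition (Assumption \ref{assum_eigenvector}) on $\delta_{\mathfrak a}(S)$ to make the contour construction in (\ref{eq_defndiscs}) well-defined; you should at least flag this step.
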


Theorem \ref{thm_outlieereigenvector} establishes the {concentration} bounds for the generalized components of the outlier singular vectors. It demonstrates that the singular vectors are concentrated on a cone with axis parallel to the true singular vectors with an explicit aperture depending on the spikes and subordination functions.  We consider an example for illustration. For simplicity, we consider the non-degenerate case such that all the outliers are well-separated in the sense that we can simply choose $S=\{\pi_a(i)\}$ or $S=\{\pi_b(j)\}.$ Let $S=\{\pi_a(i)\}$ and $\vb=\mathbf{e}_i.$ Then we obtain from Theorem \ref{thm_outlieereigenvector} that 
\begin{equation}\label{eq_exampleeqone}
\left| \langle \wh{\mathbf{u}}_i, \mathbf{e}_i \rangle \right|^2=\wh{a}_i \frac{(\Omega_B^{-1})'(\wh{a}_i)}{\Omega_B^{-1}(\wh{a}_i)}+\rO_{\prec}\left(\frac{1}{\sqrt{N}(\wh{a}_i-\Omega_{\beta}(E_{+}))^{1/2}}+\frac{1}{N \delta^2_i} \right), \ \delta_i=\delta_{\pi_a(i)}(\pi_a(i)).
\end{equation}
It is easy to see from (\ref{conidition_nonovertwo}),  (\ref{eq_complexedgederterministicbound}) and (\ref{eq_derivativeinversebound}) that the error term is much smaller than the first term of the right-hand side of (\ref{eq_exampleeqone}). In this sense, $\wh{\ub}_i$ is concentrated on a cone with axis parallel to $\mathbf{e}_i.$   

\begin{rem}\label{eq_strongeigenvectorassumption}
We mention that some partial results of Theorem \ref{thm_outlieereigenvector} have been obtained in 
(4) of Theorem 2.5 in \cite{outliermodel} for the spiked unitarily invariant model. More specifically, by assuming that $r=0$ or $s=0,$ (\ref{eq_strongone}) and (\ref{eq_strongtwo}), they obtained the concentration limit (\ref{eq_concentrationlimit}).  We extend the counterparts in \cite{outliermodel} by, on one hand, stating the results in a less restrictive setting, and on the other hand, establishing their convergent rates.

%
\end{rem}

Finally, we state the main results regarding the non-outlier singular vectors. Denote
\begin{equation*}
\kappa_i:=i^{2/3} N^{-2/3}.
\end{equation*}
\begin{thm}[Non-outlier eigenvectors]\label{thm_nonoutliereigenvector} Suppose that Assumptions \ref{assu_limit} and \ref{assum_outlier} hold. Fix any  small constant $\tau>0.$ For $\pi_a(i) \notin \mathcal{O}_+, \ i \leq \tau N$ and any deterministic vector $\vb=(v_1, \cdots, v_N)^* \in \mathbb{R}^N,$ we have that 
\begin{equation}\label{eq_nonoutliereigenvector}
\left| \langle \vb, \widehat{\ub}_{\pi_a(i)} \rangle \right|^2 \prec \sum_{j=1}^N \frac{|v_j|^2}{N(\kappa_i+|\widehat{a}_j-\Omega_{\beta}(E_{+})|^2)}. 
\end{equation}
Similarly, for the right singular vectors, we have 
\begin{equation*}
\left| \langle \vb, \widehat{\vb}_{\pi_b(j)} \rangle \right|^2 \prec \sum_{i=1}^N \frac{|v_i|^2}{N (\kappa_j+|\widehat{b}_i-\Omega_{\alpha}(E_{+})|^2)}. 
\end{equation*}
\end{thm}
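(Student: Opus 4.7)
The plan is to follow the standard delocalization-via-resolvent route, adapted to the invariant multiplicative setting, exploiting the local laws for the unspiked model (Theorem~\ref{prop_linearlocallaw}) together with the finite-rank perturbation structure of $\widehat{\mathcal Q}_1 - \mathcal Q_1$. I will only spell out the argument for the left singular vector; the right one is symmetric, exchanging the roles of $\widehat A,\widehat B$.

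\emph{Step 1 (Reduction to the resolvent).} Let $\widehat G(z)=(\widehat{\mathcal Q}_1-z)^{-1}$. By the spectral theorem applied to the Hermitian matrix $\widehat{\mathcal Q}_1$, for any $\eta>0$,
\begin{equation*}
\bigl|\langle \vb,\widehat\ub_{\pi_a(i)}\rangle\bigr|^2 \;\leq\; \eta\,\Im\,\langle \vb,\widehat G(z)\vb\rangle,\qquad z=\widehat\lambda_{\pi_a(i)}+\mathrm{i}\eta.
\end{equation*}
I will use Theorem~\ref{thm_outlier} together with the bulk rigidity of $\mathcal Q_1$ (Lemma~\ref{thm_rigidity}) and the sticking in Theorem~\ref{thm_eigenvaluesticking} to replace the random $z$ by a deterministic spectral parameter $z_i=E_i+\mathrm{i}\eta_i$ with $E_i=E_+-\kappa_i$ (or the appropriate classical location) and $\eta_i=\kappa_i+N^{-2/3+\epsilon}$ for a small $\epsilon>0$, at a multiplicative cost that is absorbed into $\prec$.

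\emph{Step 2 (From $\widehat G$ to the unspiked resolvent $\widetilde G$).} Write $\widehat A=A+\Delta_A$ and $\widehat B=B+\Delta_B$, where $\Delta_A$ and $\Delta_B$ are diagonal matrices of rank $\leq r$ and $\leq s$, respectively. Factor out the low-rank parts as $\widehat A^{1/2}=A^{1/2}(I+V_AD_AV_A^{*})$ and $\widehat B^{1/2}=B^{1/2}(I+V_BD_BV_B^{*})$ with rectangular $V_A,V_B$ of width $r,s$, so that
\begin{equation*}
\widehat{\mathcal Q}_1-z \;=\; \mathcal Q_1-z+\widetilde V\,\widetilde D\,\widetilde V^{*}
\end{equation*}
for an explicit finite-rank perturbation built from $V_A,V_B,U$ and the functions of $a_i,\widehat a_i,b_j,\widehat b_j$. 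Applying the Woodbury identity (equivalently, the linearization/Schur complement trick used in \cite{DYaos}) yields
\begin{equation*}
\widehat G(z)\;=\;\widetilde G(z)-\widetilde G(z)\widetilde V\bigl(\widetilde D^{-1}+\widetilde V^{*}\widetilde G(z)\widetilde V\bigr)^{-1}\widetilde V^{*}\widetilde G(z),
\end{equation*}
in which the inner $(r+s)\times(r+s)$ matrix is controlled through the local law on its entries. Crucially, because $z_i$ lies within $\rO(N^{-1/3})$ of $E_+$ while the indices involved in $\widetilde V$ are non-outlier-inducing, the analysis of the master equation $\det(\widetilde D^{-1}+\widetilde V^{*}\widetilde G(z)\widetilde V)=0$ that governed the outliers in Theorem~\ref{thm_outlier} now shows this matrix is well-conditioned at $z=z_i$.

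\emph{Step 3 (Local law and edge bound).} Feed the entrywise and isotropic local laws from Theorem~\ref{prop_linearlocallaw} into both $\widetilde G(z_i)$ and its sandwiched version in the Woodbury formula. The deterministic approximation to $\widetilde G_{jj}(z)$ is a rational function of $a_j$ and $\Omega_\beta(z)$, so the standard subordination identities give
\begin{equation*}
\Im \widetilde G_{jj}(z_i) \;\lesssim\; \frac{\Im\Omega_\beta(z_i)+\eta_i}{|a_j-\Omega_\beta(z_i)|^{2}} \;\lesssim\; \frac{\sqrt{\kappa_i}}{\kappa_i+|\widehat a_j-\Omega_\beta(E_+)|^{2}},
\end{equation*}
where the last step uses the square-root behaviour in Lemma~\ref{lem_prop_fc}(iii) together with $|\widehat a_j-a_j|=\rO(1/N)$ for non-spiked indices and the explicit definition of the spikes for $j\le r$. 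Multiplying by $\eta_i\sim\kappa_i$ and summing $|v_j|^2$ produces the target bound~\eqref{eq_nonoutliereigenvector}; the off-diagonal contributions from $\widetilde G_{jk}$ and the Woodbury correction are shown to be negligible by the same subordination estimates.

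\emph{Anticipated main obstacle.} The routine part is Step~1; the delicate part is Step~2, namely producing a usable Woodbury representation in which both the $\widehat A$- and $\widehat B$-spike perturbations appear simultaneously, and then proving that its $(r+s)\times(r+s)$ correction matrix is uniformly invertible at the spectral scale $z_i$. This cannot be done by brute force, because the naive factorization $\widehat A^{1/2}-A^{1/2}$ is only Hölder in the spike strengths and would mix the two sides of $U$. The cleanest route, which I intend to follow, is the block linearization of $\widehat{\mathcal Q}_1$ (parallel to the one employed in \cite{DYaos} for spiked separable covariance) so that all spikes enter through a single low-rank additive perturbation; then the invertibility of the master matrix at $z_i$ is precisely the statement that $z_i$ is \emph{not} an outlier location, which follows from the already-established Theorem~\ref{thm_outlier}.
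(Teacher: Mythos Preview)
Your overall strategy---spectral projection bound via $\eta\,\Im\langle\vb,\widehat G\vb\rangle$, then reduce $\widehat G$ to the unspiked resolvent through a block-linearized Woodbury identity, then feed in the entrywise local law---is exactly the route the paper takes. The linearization you mention at the end of your ``main obstacle'' paragraph is precisely the paper's formula \eqref{eq_keyexpansionev}, and the resulting deterministic approximation $z\langle\bm e_j,\widehat\Gb(z)\bm e_j\rangle\approx\Omega_B(z)/(\widehat a_j-\Omega_B(z))$ is the correct intermediate target.

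There is, however, a genuine gap in Step~1: your choice $\eta_i=\kappa_i+N^{-2/3+\epsilon}$ is too large and the resulting bound is off by a factor of $i$. With $\eta_i\sim\kappa_i$ and $z_i$ in the bulk one has $\Im\Omega_B(z_i)\sim\sqrt{\kappa_i}$, so the main term becomes
\[
\eta_i\,\Im\widetilde G_{jj}(z_i)\;\sim\;\frac{\eta_i\,\Im\Omega_B(z_i)}{|\widehat a_j-\Omega_B(z_i)|^{2}}\;\sim\;\frac{\kappa_i^{3/2}}{\kappa_i+|\widehat a_j-\Omega_\beta(E_+)|^2}\;=\;\frac{i\,N^{-1}}{\kappa_i+|\widehat a_j-\Omega_\beta(E_+)|^2},
\]
which matches the target \eqref{eq_nonoutliereigenvector} only when $i=\rO(1)$ and fails for all $i$ growing with $N$, let alone $i\le\tau N$. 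The paper avoids this by choosing $\eta_i$ \emph{implicitly} through the equation $\Im\Omega_B(\widehat\lambda_i+\ii\eta_i)=N^{6\epsilon}/(N\eta_i)$ (cf.\ \eqref{eq_etaidefinition}); this forces $\eta_i\,\Im\Omega_B(z_i)\sim N^{-1+6\epsilon}$ by construction, which is exactly the scaling needed. In the bulk this gives $\eta_i\sim N^{-1+6\epsilon}/\sqrt{\kappa_i}$, much smaller than your $\kappa_i$. The same mismatch propagates to your Woodbury error term: with $\eta_i\sim\kappa_i$ the correction $\eta_i\|\caE(z_i)\|/|\widehat a_j-\Omega_B(z_i)|^2$ is of order $N^{-1/2}\kappa_i^{3/4}$, again acceptable only for $i=\rO(1)$.

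A secondary point: your justification that the $(r+s)\times(r+s)$ master matrix is ``well-conditioned because $z_i$ is not an outlier location'' is qualitatively right but not sufficient on its own. One needs the quantitative lower bound $|\widehat a_j-\Omega_B(z_i)|\gtrsim |\widehat a_j-\Omega_\beta(E_+)|+\Im\Omega_B(z_i)$ uniformly in $j$ (the paper's Lemma~\ref{lem_keyestimate}), and this in turn relies on the specific scale of $\eta_i$ just described. Once you fix $\eta_i$, the remainder of your outline goes through essentially as in the paper.
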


Theorem \ref{thm_nonoutliereigenvector} establishes the results for the non-outlier eigenvectors. Especially, if the spikes are well separated from the critical values at a distance of constant order, (\ref{eq_nonoutliereigenvector}) implies that the non-outlier eigenvectors are completely delocalized in the direction of the associated spiked eigenvectors. {More specifically, assuming that for the spike $\widehat{a}_l \geq \Omega_{\beta}(E_{+})+\varsigma,$ for some fixed constant $\varsigma>0$ and $1 \leq l \leq r,$  we consider the direction of the associated spiked eigenvector, i.e., $\mathbf{v}=\mathbf{e}_l.$ In such a setting, for $\pi_a(i) \notin \mathcal{O}_+,$ the result of (\ref{eq_nonoutliereigenvector}) reads 
\begin{equation*}
\left| \langle \mathbf{e}_l, \widehat{\mathbf{u}}_{\pi_a(i)} \rangle \right|^2 \prec N^{-1},   
\end{equation*}  
which indicates that the eigenvector $\widehat{\mathbf{u}}_{\pi_a(i)}$ is completely delocalized when projected on the eigenvector associated with $\widehat{a}_l.$ Similar results hold for the right singular vectors. 
}

\section{Preliminary results and proof strategy}\label{sec_toolsandproofs}
In this section, we collect and prove some preliminary results which will be used in our technical proof. For any spectral parameter $z=E+\ii\eta\in\C_{+}$, we define
		\beq\label{eq_defnkappa}
		\kappa\equiv\kappa(z):=\absv{E-E_{+}},
	\eeq
	where  $E_{+}$ is the  rightmost edge of $\mu_{\alpha}\boxtimes\mu_{\beta}$ given in (\ref{eq_edge}). For each $0\leq a< b$ and $0<\tau<\min\{\frac{E_{+}-E_{-}}{2},1\}$, we define the following set of spectral parameter $z$ by
	\beq \label{eq_fundementalset}
	{\caT_{\tau}(a,b)}\deq \{z=E+\ii\eta\in\C_{+}:E_+-\tau \leq E \leq \tau^{-1}, a<\eta< b\}.
	\eeq
	Furthermore, for $0<\xi<1$, we let 
	\beq\label{eq_eltalgamma}
		\eta_{L}\equiv\eta_{L}(\xi)\deq N^{-1+\xi},
	\eeq and let $\eta_{U}>1$ be a large $N$-independent constant. Finally, we define another  domain of spectral parameters
\beq \label{eq_fundementalsetoutlier}
	{ \caT_\tau(\eta_U)} \deq \{z=E+\ii\eta\in\C : E_++N^{-2/3+\tau} \leq E \leq \tau^{-1}, \ |\eta|<\eta_U \}.
	\eeq

\subsection{Properties of free multiplicative convolution and subordination functions}\label{sec_subordinationpre}

In this subsection, we collect some important properties regarding $\mu_{\alpha} \boxtimes \mu_{\beta}, \mu_{A} \boxtimes \mu_{B}$ and their associated subordination functions. In the following lemma, we summarize the results regarding the properties of $\mu_{A} \boxtimes \mu_B$ on the spectral domains as in (\ref{eq_fundementalset}) and (\ref{eq_fundementalsetoutlier}).

\begin{lem}\label{prop:stabN}
	Suppose Assumption \ref{assu_limit} holds. Fix some small constant $\tau>0,$ denote
	\begin{equation}\label{Dnot}
	{\mathcal{T} \equiv \mathcal{T}(\tau, \eta_L, \eta_U):=\mathcal{T}_{\tau}(\eta_L, \eta_U) \cup \mathcal{T}_{\tau}(\eta_U). }
    \end{equation}	 
Then for sufficiently large $N$, the following hold uniformly over $z\in {\caT}$:
	\begin{itemize}
		\item[(1)] We have
		\begin{align*}
		\min_i\absv{a_{i}-\Omega_{B}(z)}&\gtrsim 1, &\min_i \absv{b_{i}-\Omega_{A}(z)}&\gtrsim 1,&
		 \absv{\Omega_{A}(z)}&\sim 1,& \absv{\Omega_{B}(z)}&\sim 1.
		\end{align*}
		
		\item[(2)] For $\kappa$ defined in (\ref{eq_defnkappa}), we have 
		\beqs
		\im m_{\mu_{A}\boxtimes\mu_{B}}(z)\sim\left\{
		\begin{array}{lcl}
			\sqrt{\kappa+\eta}, &\text{if } \ E\in\supp\mu_{A}\boxtimes\mu_{B},\\
			\dfrac{\eta}{\sqrt{\kappa+\eta}}, &\text{if } \ E\notin\supp\mu_{A}\boxtimes\mu_{B}.
		\end{array}
		\right.
		\eeqs
		
		\item[(3)] For the derivatives of $\Omega_{A}$ and $\Omega_{B}$, we have
		\begin{align}\label{eq_derivativecontrol}
		\absv{\Omega_{A}'(z)}&\sim \frac{1}{\sqrt{|z-E_+|}}\sim
		\absv{\Omega_{B}'(z)} , &
		\absv{\Omega_{A}^{''}(z)}&\sim \frac{1}{|z-E_+|^{3/2}}\sim
		\absv{\Omega_{B}^{''}(z)}.
		\end{align}
		
		\item[(4)] For all fixed $\epsilon>0$, we have 
		\begin{equation}\label{eq_closenessofomega}
		|\Omega_A(z)-\Omega_\alpha(z)|+|\Omega_B(z)-\Omega_\beta(z)| \lesssim \frac{N^{-1+\epsilon}}{\sqrt{\kappa+\eta}}.
        \end{equation}	 
	\end{itemize}
\end{lem}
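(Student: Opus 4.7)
The plan is to prove (1)--(3) by first establishing the analogous statements for the limiting objects $\Omega_\alpha, \Omega_\beta, \mu_\alpha \boxtimes \mu_\beta$, and then transferring them to the $N$-dependent versions using part (4). Part (4) itself will be proved by a stability analysis of the subordination system.

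For the limiting versions: the bounds $|\Omega_\alpha(z)| \sim 1 \sim |\Omega_\beta(z)|$ on $\caT$ follow from Lemma \ref{lem_prop_fc}(i) (boundedness on compact subsets) together with the fact that neither can vanish on $\C \setminus \R_+$ (otherwise the subordination equation (\ref{eq_suborsystem}) would force a contradiction with Lemma \ref{lem_prop_fc}(iv)). The separation estimate $\min_i |\alpha_i - \Omega_\beta(z)| \gtrsim 1$ for any $\alpha_i \in \supp \mu_\alpha$ is exactly Lemma \ref{lem_prop_fc}(iv). The derivative estimate (\ref{eq_derivativecontrol}) for the limiting $\Omega_\alpha', \Omega_\beta'$ follows by differentiating the local expansion (\ref{eq_localquadtratic}) and extending via analyticity/Cauchy integral formulas on the bulk region $\{z : |z-E_+| \gtrsim 1\}$. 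For part (2), the square-root decay of $\rho$ in (\ref{eq_edge}) yields the stated behavior of $\im m_{\mu_\alpha \boxtimes \mu_\beta}(z)$ by a direct integral computation (inside the support one extracts $\sqrt{\kappa + \eta}$ from the pole part; outside it, the numerator of the $\eta$-term is the dominant contribution).

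For part (4), I would write $\Delta_A(z) := \Omega_A(z) - \Omega_\alpha(z)$ and $\Delta_B(z) := \Omega_B(z) - \Omega_\beta(z)$, and subtract the two subordination systems
\begin{equation*}
z M_{\mu_A}(\Omega_B(z)) = \Omega_A(z) \Omega_B(z), \qquad z M_{\mu_\alpha}(\Omega_\beta(z)) = \Omega_\alpha(z) \Omega_\beta(z),
\end{equation*}
together with their $B/\beta$ counterparts. A first-order Taylor expansion around $(\Omega_\alpha(z), \Omega_\beta(z))$ yields a linear system $J(z)(\Delta_A, \Delta_B)^T = R(z)$, whose Jacobian $J(z)$ is the same (up to lower-order terms) as the one controlling the stability of the limiting subordination system. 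The inhomogeneity $R(z)$ is controlled by $|M_{\mu_A} - M_{\mu_\alpha}|$ and $|M_{\mu_B} - M_{\mu_\beta}|$ evaluated at points that Lemma \ref{lem_prop_fc}(iv) guarantees are separated from the respective supports by an $\rO(1)$ distance. The Lévy distance bound (\ref{eq_epsilondefinition}) then gives $|R(z)| \lesssim N^{-1+\epsilon}$. Inverting $J(z)$ costs a factor of $(\kappa+\eta)^{-1/2}$ because $\det J(z)$ vanishes like $\sqrt{z-E_+}$ at the edge (this is the square-root singularity of $\Omega_\alpha', \Omega_\beta'$ from (\ref{eq_localquadtratic})), yielding the claimed bound (\ref{eq_closenessofomega}).

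Once (4) is in hand, (1) and (3) for the $A, B$ versions follow by perturbation: $|\Omega_A(z) - \Omega_\alpha(z)| \ll 1$ on $\caT$, so boundedness and separation transfer directly, and derivatives transfer via Cauchy's formula on a shrinking disk of radius comparable to $\sqrt{\kappa+\eta}$ (which is still much larger than the error in (4)). Part (2) for $\mu_A \boxtimes \mu_B$ follows from its limiting counterpart and (\ref{eq_closenessofomega}) via the identity $m_{\mu_A \boxtimes \mu_B}(z) = \frac{1}{z}\bigl(M_{\mu_A \boxtimes \mu_B}(z)/(1 - M_{\mu_A \boxtimes \mu_B}(z))\bigr)$ from (\ref{eq_mtrasindenity}), after observing that $M_{\mu_A \boxtimes \mu_B}(z) = M_{\mu_A}(\Omega_B(z))$ and that $\Omega_B(z)$ stays at $\rO(1)$ distance from $\supp \mu_A$ with a correction of order $N^{-1+\epsilon}(\kappa+\eta)^{-1/2}$, which is negligible against $\sqrt{\kappa+\eta}$ on $\caT$.

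The main obstacle is the stability analysis at Step (4) in the regime $\eta \approx \eta_L$ and $\kappa \approx N^{-2/3+\tau}$, where the Jacobian $J(z)$ is nearly singular and a naive Neumann inversion fails. This is handled by exploiting the specific structure of $J$ coming from the quadratic expansion (\ref{eq_localquadtratic}): the leading singular direction of $J^{-1}$ is explicit, allowing the $(\kappa+\eta)^{-1/2}$ blow-up but no worse. This type of analysis is carried out in \cite{JHC, DJ1}, and I would appeal to those arguments (in particular Lemmas~A.3 and A.7 of \cite{DJ1}) to avoid rewriting the detailed edge-stability calculations.
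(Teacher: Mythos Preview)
Your proposal is correct and matches the approach taken in the cited reference: the paper's own proof is simply ``See Proposition 2.11 and its proof in \cite{DJ1}'', and the argument you sketch---first proving the statements for the limiting subordination functions via Lemma~\ref{lem_prop_fc}, then transferring to the $N$-dependent versions through the stability estimate (4), which itself comes from linearizing the subordination system and inverting a Jacobian that degenerates like $\sqrt{z-E_+}$ at the edge---is precisely what that reference carries out. Your closing appeal to Lemmas~A.3 and~A.7 of \cite{DJ1} for the delicate edge-stability step is exactly the right pointer.
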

\begin{proof}
See Proposition 2.11 and its proof in \cite{DJ1}. 
\end{proof}


%
%

\subsection{Local laws for free multiplication of random matrices}
In this subsection, we prove the local laws  which are the key ingredients for our technical proof. We start with introducing some notations. Let $U\equiv U_{N}$ be a random unitary or orthogonal matrix, Haar distributed on the $N$-dimensional unitary group $U(N)$ or orthogonal group $O(N)$. Define $\wt{A}\deq U\adj AU$, $\wt{B}\deq UBU\adj$, and 
\beq\label{defn_eq_matrices}
	H\deq AUBU\adj,	\quad	\caH\deq U\adj AU B,	\quad	
	\wt{H}\deq A^{1/2}\wt{B}A^{1/2}, \AND \wt{\caH}\deq B^{1/2}\wt{A}B^{1/2}. 
\eeq 
Note that we only need to consider diagonal 	matrices $A$ and $B$ since $U$ is a Haar random unitary or orthogonal  matrix. Moreover, $\wt{H}$ and $\wt{\caH}$ are Hermitian random matrices. 
		
Since $H$, $\caH$, $\wt{H}$ and $\wt{\caH}$ have the same eigenvalues, we denote the common eigenvalues as $\lambda_1 \geq \lambda_2 \geq \cdots \geq\lambda_N$ in what follows. 
For $z\in\C_{+}:=\{z \in \mathbb{C}: \im z >0\}$, we define the \emph{resolvents} of the above random matrices as follows
	\beq\label{defn_greenfunctions} 
		G(z)\deq (H-zI)^{-1},\quad \caG(z)\deq (\caH-zI)^{-1},\quad \wt{G}(z)\deq (\wt{H}-zI)^{-1},\quad \wt{\caG}(z)\deq (\wt{\caH}-zI)^{-1}.
	\eeq
In the rest of the paper, we usually omit the dependence of $z$ and simply write $G, \caG, \wt{G}$ and $\wt{\caG}.$	
Let $\mu_{H}$ be the ESD of $H$ and $m_H(z)$ be the associated Stieltjes transform.  Since $H,\caH,\wt{H}$ and $\wt{\caH}$ are similar to each other, we have that $m_H(z)=\tr G=\tr \caG =\tr \wt{G}=\tr \wt{\caG}.$

 {Our proof relies on the following linearization crucially.}  
For $z \in \C_{+}$, denote $Y=A^{1/2}U B^{1/2}$ and $\Hb \equiv \Hb(z)$  as 
\begin{equation}\label{eq_BIGH}
\Hb(z) :=
\begin{pmatrix}
0 & z^{1/2} Y \\
z^{1/2}Y^* & 0
\end{pmatrix},
\end{equation}
and $\Gb(z)=(\Hb-z)^{-1}.$
By Schur's complement, it is easy to see that 
\begin{equation}\label{eq_schurcomplement}
\Gb(z)=
\begin{pmatrix}
\wt{G}(z) & z^{-1/2} \wt{G}(z) Y \\
z^{-1/2}Y^* \wt{G}(z) &  \wt{\mathcal{G}}(z)
\end{pmatrix},
\end{equation}
where we recall the definitions in (\ref{defn_greenfunctions}).
To simplify notations, we define the index sets 
\begin{equation*}
\mathcal{I}_1:=\llbra1, N\rrbra,  \  \mathcal{I}_2:=\llbra N+1, 2N\rrbra, \ \mathcal{I}:= \mathcal{I}_1 \cup \mathcal{I}_2.
\end{equation*}
Then we relabel the indices of the matrices according to 
\begin{equation*}
U=(U_{i \mu}: i \in \mathcal{I}_1, \mu \in \mathcal{I}_2), \  A=(A_{ij}: i,j \in \mathcal{I}_1), \  B=(B_{\mu \nu}: \mu, \nu \in \mathcal{I}_2). 
\end{equation*}
In the proof of the paper, we will consistently use the latin letters $i,j \in \mathcal{I}_1$ and greek letters $\mu, \nu \in \mathcal{I}_2$.  We denote the $2N \times 2N$ diagonal matrix $\Theta \equiv \Theta(z)$ by letting 
\begin{equation}\label{eq_Bigthetadefinition}
\Theta_{ii}=\frac{1}{z} \frac{\Omega_B(z)}{a_i-\Omega_B(z)}, \ \Theta_{\mu \mu}=\frac{1}{z} \frac{\Omega_A(z)}{b_\mu-\Omega_A(z)}. 
\end{equation} 
 We have the following controls for the resolvent $\Gb(z)$ uniformly in $z \in {\mathcal{T}_{\tau}(\eta_L, \eta_U)}.$  {Denote $\gamma_j$ as the $j$-th $N$-quantile (or typical location) of $\mu_{\alpha} \boxtimes \mu_{\beta}$ such that 
\begin{equation}\label{eq_defngammajj}
\int_{\gamma_j}^\infty d \mu_{\alpha} \boxtimes \mu_{\beta}(x)=\frac{j}{N}.
\end{equation}
Similarly, we denote $\gamma_j^*$ to be the $j$-th $N$-quantile of $\mu_A \boxtimes \mu_B.$ }
\begin{thm}[Local laws]\label{prop_linearlocallaw}
Suppose Assumption \ref{assu_limit} holds. Let $\tau$ and $\xi$ be fixed small positive constants. We have that 
\begin{equation}\label{locallaw_edgeentrywise}
\sup_{1 \leq k ,l \leq 2N}|(\Gb(z)-\Theta(z))_{kl}| \prec \sqrt{\frac{\im m_{\mu_A \boxtimes \mu_B}}{N \eta}}+\frac{1}{N \eta},
\end{equation}
and 
\begin{equation}\label{locallaw_edgeaveraged}
|m_H(z)-m_{\mu_A \boxtimes \mu_B}(z)| \prec \frac{1}{N \eta},
\end{equation}
hold uniformly in $z \in {\mathcal{T}_{\tau}(\eta_L, \eta_U)}.$ Moreover, far away from the spectrum, for  $z \in {\mathcal{T}_{\tau}(\eta_U)}$ uniformly,  we have that 
\begin{equation}\label{locallaw_outlierentrywise}
\sup_{1 \leq k ,l \leq 2N}|(\Gb(z)-\Theta(z))_{kl}| \prec N^{-1/2}(\kappa+\eta)^{-1/4},
\end{equation}
and 
\begin{equation}\label{locallaw_outlieraveraged}
|m_H(z)-m_{\mu_A \boxtimes \mu_B}(z)| \prec \frac{1}{N(\kappa+\eta)}. 
\end{equation}
\end{thm}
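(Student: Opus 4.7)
The plan is to follow a resolvent-based approach anchored on the linearization $\Hb$ from (\ref{eq_BIGH}) and the Schur identity (\ref{eq_schurcomplement}). The deterministic equivalent $\Theta$ in (\ref{eq_Bigthetadefinition}) is precisely the diagonal profile that the subordination system (\ref{eq_suborsystem}) predicts for $\Gb$, so the argument reduces to showing that $\Gb$ solves an approximate matrix Dyson-type equation whose exact solution is $\Theta$. I would therefore split the proof into three steps: (i) derive an approximate master equation for $\Gb$ via Haar integration by parts; (ii) invoke a quantitative stability lemma for the subordination system that transfers errors in the master equation to entrywise errors on $\Gb-\Theta$; (iii) close the estimates by a bootstrap in $\eta$ starting from $\eta=\eta_U$ and descending to $\eta=\eta_L$.

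For step (i), I would apply the Weingarten/Haar integration by parts formula to expressions of the form $\E[f(U)U_{i\mu}]$ arising from expanding $\wt{H}\wt{G}$ and $\wt{\caH}\wt{\caG}$. This produces approximate identities of the shape
\beqs
\wt{G}_{ij}\approx -\frac{\delta_{ij}}{z}\frac{\wt{\Omega}_{B}(z)}{a_{i}-\wt{\Omega}_{B}(z)}+(\text{fluctuation}),
\eeqs
where the random subordination profile $\wt{\Omega}_{B}(z):=z\tr(A\wt{G})/(1+z\tr\wt{G})$ is what (\ref{eq_defn_eq}) converges to, with an analogous identity for $\wt{\caG}_{\mu\nu}$ producing $\wt{\Omega}_{A}$. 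Concentration of linear and quadratic functionals of $U$ under the Haar measure, using logarithmic-Sobolev or Gromov--Milman inequalities as in \cite{outliermodel}, controls the fluctuation remainders with Gaussian tails.

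For step (ii), I would exploit Lemma \ref{prop:stabN}, particularly the separation $|a_{i}-\Omega_{B}(z)|,|b_{\mu}-\Omega_{A}(z)|\gtrsim 1$ and the derivative estimate (\ref{eq_derivativecontrol}). The system (\ref{eq_suborsystem}) linearized around $(\Omega_{A}(z),\Omega_{B}(z))$ has stability constant of order $|z-E_{+}|^{-1/2}$ near the edge; this is exactly the factor that must appear when one inverts the subordination system to convert the master equation error into a bound on $\Gb-\Theta$, and it matches the square-root decay (\ref{eq_edge}) that produces the $\sqrt{\im m/(N\eta)}$ scaling in (\ref{locallaw_edgeentrywise}). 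The closeness (\ref{eq_closenessofomega}) then allows one to substitute $\Omega_{A,B}$ for their limiting versions without degrading the rate.

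The main obstacle will be obtaining the optimal averaged rate $1/(N\eta)$ in (\ref{locallaw_edgeaveraged}), since a naive trace of the entrywise bound gives only $\sqrt{\im m/(N\eta)}+1/(N\eta)$. The improvement requires a fluctuation-averaging mechanism: after normalizing the trace of the master equation, the linear fluctuations cancel at leading order and only quadratic combinations of entrywise errors survive, which combined with the self-consistent structure of (\ref{eq_suborsystem}) yields the $1/(N\eta)$ scaling. I would adapt the polynomialization/iterated-cumulant scheme developed in \cite{DJ1,JHC} for the trace equation of the non-spiked model to the linearized block resolvent here; the block structure is the genuinely new technical difficulty, as it forces one to track off-diagonal couplings $\wt{G}Y$ that do not appear in the scalar setting of \cite{DJ1,JHC}. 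Finally, the outlier-regime bounds (\ref{locallaw_outlierentrywise}) and (\ref{locallaw_outlieraveraged}) follow from the edge versions by a standard deformation argument: since $\kappa+\eta\gtrsim N^{-2/3+\tau}$ on $\caT_{\tau}(\eta_{U})$, one uses $\im m_{\mu_{A}\boxtimes\mu_{B}}\sim\eta/\sqrt{\kappa+\eta}$ from Lemma \ref{prop:stabN}(2) together with analyticity of $\Gb$ to transport bounds from the boundary curve $\eta=N^{-2/3+\tau}$ into the interior region of $\caT_{\tau}(\eta_{U})$.
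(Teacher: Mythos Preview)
Your proposal outlines a valid from-scratch derivation of the local laws, but this is not what the paper does, and it is considerably more work than necessary. In the paper, the edge bounds (\ref{locallaw_edgeentrywise}) and (\ref{locallaw_edgeaveraged}) are not proved here at all: they are quoted directly from \cite[Theorem 2.13 and Remark 2.14]{DJ1}, where the full subordination-stability/bootstrap/fluctuation-averaging machinery you describe in steps (i)--(iii) has already been carried out for exactly this model. So your steps (i)--(iii) are correct in spirit but amount to reproducing \cite{DJ1}; in the context of this paper the ``proof'' of the edge local laws is a citation. Likewise, (\ref{locallaw_outlierentrywise}) is obtained by simply inserting the asymptotic $\im m_{\mu_{A}\boxtimes\mu_{B}}\sim\eta/\sqrt{\kappa+\eta}$ from Lemma \ref{prop:stabN}(2) into (\ref{locallaw_edgeentrywise}), together with a standard extension down to the real axis (the paper points to \cite[Theorem 3.12]{alex2014}); this matches what you say.

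The one place where the paper gives an actual argument is (\ref{locallaw_outlieraveraged}), and here your proposal is too vague and misses an ingredient. You say you would ``transport bounds from the boundary curve $\eta=N^{-2/3+\tau}$ into the interior of $\caT_{\tau}(\eta_{U})$'' by analyticity. The paper makes this precise via Cauchy's integral formula: for $z_{0}\in\caT_{\tau}(\eta_{U})$ one writes $m_{H}(z_{0})-m_{\mu_{A}\boxtimes\mu_{B}}(z_{0})$ as a contour integral over a square $\caC(z_{0})$ of side $\kappa_{0}$ centered at $z_{0}$, and splits $\caC(z_{0})$ into the piece with $|\im z|>\eta_{L}$, where (\ref{locallaw_edgeaveraged}) applies and the $1/\im z$ factor integrates to $\rO(\log N/\kappa_{0})$, and the piece with $|\im z|\leq\eta_{L}$. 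On this second piece (\ref{locallaw_edgeaveraged}) is unavailable, and the paper instead uses the rigidity estimate of Lemma \ref{thm_rigidity} to bound $m_{H}$ and $m_{\mu_{A}\boxtimes\mu_{B}}$ deterministically by $\rO(1)$, so that the short segment of length $2\eta_{L}$ contributes $\rO(\eta_{L}/\kappa_{0})$. Your sketch does not mention rigidity, and without it there is no way to control the contour near the real axis; this is the missing step you should add.
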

\begin{proof}
When {$z \in \mathcal{T}_{\tau}(\eta_L, \eta_U),$} (\ref{locallaw_edgeaveraged}) has been proved in Theorem 2.13 of \cite{DJ1}. Moreover, (\ref{locallaw_edgeentrywise}) has also been established therein with a slightly different form and it can be proved following the lines of the proof of \cite[Theorem 2.13]{DJ1}; see Remark 2.14 therein for more details.

When {$z \in \mathcal{T}_{\tau}(\eta_U),$} (\ref{locallaw_outlierentrywise}) follows from \eqref{eq_localquadtratic}, Lemma \ref{prop:stabN}, and (\ref{locallaw_edgeentrywise}). The calculation is standard in the random matrix theory literature; for instance, see Theorem 3.12 and its proof in \cite{alex2014}. We omit the details here.  Finally, we prove (\ref{locallaw_outlieraveraged}). The discussion is similar to that of equation (2.19) of \cite{BEC} and we only sketch the key points.  We fix an arbitrary chosen $\epsilon\in(0,\tau/100)$ and consider the event $\Xi$ on which we have
	\begin{align}\label{eq:locallawout_Xi}
		&\sup_{z\in\caD_{\tau}(\eta_{L},\eta_{U})}\eta\absv{m_{H}(z)-m_{\mu_{A}\boxtimes\mu_{B}}(z)}\leq N^{-1+\epsilon/2},&
		&\max_{1\leq i\leq N/3}i^{1/3}\absv{\lambda_{i}-\gamma_{i}}\leq N^{-2/3+\epsilon},
	\end{align}
	where in the proof we choose $\eta_{L}=N^{-1+\epsilon}$. By  (\ref{locallaw_edgeentrywise}), (\ref{locallaw_edgeaveraged}) and Lemma \ref{thm_rigidity} below, we have $\prob{\Xi}\geq 1- N^{-D}$ for any large $D>0$. For all $z_{0}=E_{0}+\ii\eta_{0}\in {\caT_{\tau}(\eta_{U})}$ with $4\eta_{0}\leq\kappa_{0}=\absv{E_{0}-E_{+}}$, we consider a counter-clockwise square contour $\caC(z_{0})$ with side length $\kappa_{0}$ and (bary)center $z_{0}$. Then, on the event $\Xi,$ by Cauchy's theorem, we have
	\beqs
		m_{H}(z_{0})-m_{\mu_{A}\boxtimes\mu_{B}}(z_{0})
		=\left(\int_{\caC_{>}(z_{0})}+\int_{\caC_{\leq}(z_{0})}\right)\frac{m_{H}(z)-m_{\mu_{A}\boxtimes\mu_{B}}(z)}{z-z_{0}}\dd z,
	\eeqs
	where $\caC_{>}(z_{0})=\caC(z_{0})\cap\{z:\absv{\im z}>\eta_{L}\}$ and $\caC_{\leq}(z_{0})=\caC(z_{0})\cap\{z:\absv{\im z}\leq \eta_{L}\}$. On the contour $\caC_{>}(z_{0}),$ we use the first bound in \eqref{eq:locallawout_Xi} to get that for some constant $C>0$
	\begin{align}\label{eq:contour_in}
		&\Absv{\int_{\caC_{>}(z_{0})}\frac{m_{H}(z)-m_{\mu_{A}\boxtimes\mu_{B}}(z)}{z-z_{0}}\dd z}
		\leq N^{-1+\epsilon/2}\frac{2}{\kappa_{0}}\Absv{\int_{\caC_{>}(z_{0})}\frac{1}{\im z}\dd z}\\
		& \leq N^{-1+\epsilon/2}\frac{4}{\kappa_{0}}\left(\frac{\kappa_{0}}{\eta_{0}+\kappa_{0}/2}+\log\left(\frac{\eta_{0}+\kappa_{0}/2}{\eta_{L}}\right)\right)\leq \frac{C}{\kappa_{0}+\eta_{0}}N^{-1+\epsilon/2}\log N\leq \frac{C}{\kappa_{0}+\eta_{0}}N^{-1+\epsilon}. \nonumber
	\end{align}	
On the other hand, for $z$ on the other contour $\caC_{\leq}(z_{0}),$ we use
	\beqs
		\absv{m_{H}(z)}\leq \frac{1}{N}\sum_{i}\frac{1}{E-\lambda_{i}}\leq \frac{1}{N}\sum_{i\leq N/3} \frac{1}{E-\gamma_{i}-i^{-1/3}N^{-2/3+\epsilon}}+\frac{1}{N}\sum_{i>N/3}\frac{1}{E-\gamma_{N/3}-3N^{-1+\epsilon}}\leq C,
	\eeqs
	where in the second step we used the second bound in \eqref{eq:locallawout_Xi} and in the third step we used the fact that $E_+-\gamma_i \sim i^{2/3} N^{-2/3}$. Following the same argument and  using Lemma \ref{thm_rigidity} below, we get $\absv{m_{\mu_{A}\boxtimes\mu_{B}}(z)}\leq C$. Using the above bounds, we get
	\beq\label{eq:contour_out}
		\Absv{\int_{\caC_{\leq}(z_{0})}\frac{m_{H}(z)-m_{\mu_{A}\boxtimes\mu_{B}}(z)}{z-z_{0}}\dd z}\leq \frac{C}{\kappa_{0}}\eta_{L}=C\frac{N^{\epsilon}}{N\kappa_{0}}\leq \frac{C}{\kappa_{0}+\eta_{0}}N^{-1+\epsilon}.
	\eeq
	Combining \eqref{eq:contour_in} and \eqref{eq:contour_out}, we conclude our proof.

\end{proof}

Finally, we collect two important consequences of (\ref{locallaw_edgeentrywise}) and (\ref{locallaw_edgeaveraged}).   Recall (\ref{eq_defngammajj}) and $\lambda_1 \geq \lambda_2 \geq \cdots \geq  \lambda_N$ are the eigenvalues of $AUBU^*.$
\begin{lem}[Spectral rigidity near the upper edge] \label{thm_rigidity} Suppose Assumption \ref{assu_limit} holds true. For any small constant $0<c<1/2,$ we have that for all $1 \leq i \leq cN,$
\begin{equation*}
|\lambda_i-\gamma_i^*| \prec i^{-1/3}N^{-2/3}. 
\end{equation*}  
Moreover, the same conclusion holds if $\gamma_i^*$ is replaced with $\gamma_i.$
\end{lem}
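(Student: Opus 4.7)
The plan is to derive the edge rigidity from the averaged local laws \eqref{locallaw_edgeaveraged} and \eqref{locallaw_outlieraveraged} via the standard counting-function approach using the Helffer--Sj\"{o}strand formula, and then to transfer the rigidity statement from $\gamma_i^*$ to $\gamma_i$ using the closeness of $\mu_A\boxtimes\mu_B$ and $\mu_\alpha\boxtimes\mu_\beta$.

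First, I would record the square-root edge behavior needed to invert a counting-function estimate into an eigenvalue rigidity estimate. By Lemma \ref{lem_prop_fc}(ii)--(vi), the densities of $\mu_\alpha\boxtimes\mu_\beta$ and $\mu_A\boxtimes\mu_B$ both behave like $\sqrt{E_+-x}$ near their respective upper edges, and $|\hat{E}_+ - E_+|\leq N^{-2/3+\epsilon}$. From the square-root behavior one obtains $E_+ - \gamma_j \sim (j/N)^{2/3}$ and $\hat{E}_+ - \gamma_j^* \sim (j/N)^{2/3}$ uniformly in $1\leq j\leq cN$. Moreover, Lemma \ref{prop:stabN}(4) and (ii) imply that the Stieltjes transforms $m_{\mu_A\boxtimes\mu_B}$ and $m_{\mu_\alpha\boxtimes\mu_\beta}$ differ by $O(N^{-1+\epsilon}(\kappa+\eta)^{-1/2})$, which combined with the edge scaling yields $|\gamma_j - \gamma_j^*| = o(j^{-1/3}N^{-2/3})$. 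Hence it suffices to prove the statement with $\gamma_j^*$.

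Second, I would control the empirical counting function. Define
\[
\mathfrak{n}(E)\deq \frac{1}{N}\#\{i:\lambda_i\geq E\},\qquad \mathfrak{n}_{sc}(E)\deq \mu_A\boxtimes\mu_B([E,\infty)).
\]
Following the now-standard Helffer--Sj\"{o}strand argument (see e.g.\ Theorem 3.12 of \cite{alex2014} and the analogous additive-case argument in \cite{BEC}), one writes $\mathfrak{n}(E)-\mathfrak{n}_{sc}(E)$ as a contour-type integral of $m_H(z)-m_{\mu_A\boxtimes\mu_B}(z)$ against $\partial_{\bar z}\tilde\chi_E$ for a smooth cutoff $\tilde\chi_E$ approximating $\mathbf{1}_{[E,\infty)}$. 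Splitting this integral into a piece with $|\im z|\geq \eta_L$, where \eqref{locallaw_edgeaveraged} and \eqref{locallaw_outlieraveraged} are directly applicable, and a remainder with $|\im z|<\eta_L$, handled via the a priori bound on $|m_H|$ that follows from the input of \eqref{locallaw_edgeentrywise} at scale $\eta_L$, one obtains $|\mathfrak{n}(E)-\mathfrak{n}_{sc}(E)|\prec N^{-1}$ uniformly for $E$ in a compact neighborhood of $\hat{E}_+$. Inverting this bound through the square-root behavior $\hat{E}_+-\gamma_j^*\sim (j/N)^{2/3}$ produces $|\lambda_j - \gamma_j^*|\prec j^{-1/3}N^{-2/3}$ for $1\leq j\leq cN$.

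The main technical obstacle is the counting-function bound near the edge; one must carefully handle the transition of $\im m_{\mu_A\boxtimes\mu_B}$ from the bulk regime $\sim\sqrt{\kappa+\eta}$ to the outside-spectrum regime $\sim \eta/\sqrt{\kappa+\eta}$ described in Lemma \ref{prop:stabN}(2), and choose the cutoff $\tilde\chi_E$ so that the resulting Helffer--Sj\"{o}strand integrals remain of order $N^{-1+\epsilon}$ when centered at $E$ with $|\hat{E}_+-E|$ ranging from $N^{-2/3+\epsilon}$ up to a constant. Once this edge accounting is done, the rest is routine. The replacement of $\gamma_j^*$ by $\gamma_j$ in the conclusion is immediate from the first paragraph, completing the proof.
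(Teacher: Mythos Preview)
Your strategy is the standard one and matches what the paper defers to: the paper's proof of Lemma~\ref{thm_rigidity} is simply a citation to \cite[Theorem~2.15]{DJ1}, where rigidity is derived from the averaged local law via precisely the Helffer--Sj\"ostrand counting-function argument you sketch, and the passage from $\gamma_j^*$ to $\gamma_j$ via the closeness \eqref{eq_closenessofomega} (cf.\ \eqref{eq_squarrootextendsion}) is correct.

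There is, however, a circularity to flag in your write-up. You list both \eqref{locallaw_edgeaveraged} and \eqref{locallaw_outlieraveraged} as inputs, but in this paper \eqref{locallaw_outlieraveraged} is \emph{proved using} Lemma~\ref{thm_rigidity}: the high-probability event $\Xi$ in \eqref{eq:locallawout_Xi} includes the rigidity bound, and the contour estimate on $\mathcal{C}_{\leq}(z_0)$ invokes it explicitly. So appealing to \eqref{locallaw_outlieraveraged} here would be circular. The fix is painless: the domain $\mathcal{T}_\tau(\eta_L,\eta_U)$ of \eqref{locallaw_edgeaveraged} already covers the full real range $E_+-\tau\le E\le\tau^{-1}$, so the Helffer--Sj\"ostrand integral over $\{|\im z|\ge\eta_L\}$ is controlled by \eqref{locallaw_edgeaveraged} alone, and \eqref{locallaw_outlieraveraged} is not needed. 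With that adjustment your argument goes through.
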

\begin{proof}
See Theorem 2.15 of \cite{DJ1}.
\end{proof}
Denote the singular value decomposition (SVD) of $Y=A^{1/2} UB^{1/2}$ as 
\begin{equation*}
Y=\sum_{k=1}^N \sqrt{\lambda_i} \ub_k \vb_k^*,
\end{equation*}
where $\{\ub\}_k$ and $\{\vb_k\}$ are the left and right singular vectors of $Y,$ respectively. 
\begin{lem}[Delocalization of the singular vectors] \label{thm_delocalization}
Suppose Assumption \ref{assu_limit} holds true. For any small constant $0<c<1/2,$ we have that for all $1 \leq k \leq cN,$
\begin{equation*}
\max_i|\ub_k(i)|^2+\max_\mu|\vb_k(\mu)|^2 \prec \frac{1}{N}. 
\end{equation*}
\end{lem}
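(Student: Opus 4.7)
The approach is the standard resolvent-based delocalization argument, applied to the two symmetrized matrices $\wt H=YY\adj$ and $\wt \caH=Y\adj Y$ for which Theorem \ref{prop_linearlocallaw} furnishes entrywise local laws. Since $\wt H$ and $\wt \caH$ have spectral decompositions $\sum_{k}\lambda_{k}\ub_{k}\ub_{k}\adj$ and $\sum_{k}\lambda_{k}\vb_{k}\vb_{k}\adj$ respectively, the standard spectral identities give, for any $E+\ii\eta\in\C_{+}$,
\beqs
	\eta\,\im \wt G_{ii}(E+\ii\eta) = \sum_{k=1}^{N}\frac{\eta^{2}\,\absv{\ub_{k}(i)}^{2}}{(\lambda_{k}-E)^{2}+\eta^{2}},\qquad \eta\,\im \wt\caG_{\mu\mu}(E+\ii\eta) = \sum_{k=1}^{N}\frac{\eta^{2}\,\absv{\vb_{k}(\mu)}^{2}}{(\lambda_{k}-E)^{2}+\eta^{2}}.
\eeqs
Dropping all but the $k$-th summand and evaluating at $E=\lambda_{k}$ yields
\beqs
	\absv{\ub_{k}(i)}^{2}\leq \eta\,\im \wt G_{ii}(\lambda_{k}+\ii\eta),\qquad 	\absv{\vb_{k}(\mu)}^{2}\leq \eta\,\im \wt\caG_{\mu\mu}(\lambda_{k}+\ii\eta),
\eeqs
so the task reduces to controlling $\wt G_{ii}$ and $\wt\caG_{\mu\mu}$ at the spectral scale $\eta=N^{-1+\xi}$.

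The next step is to fix an arbitrarily small $\xi>0$, set $\eta = N^{-1+\xi}$, and invoke the entrywise local law (\ref{locallaw_edgeentrywise}) at $z=\lambda_{k}+\ii\eta$, which gives $\absv{\wt G_{ii}(z)}\leq \absv{\Theta_{ii}(z)}+\rO_{\prec}\bigl(\sqrt{\im m_{\mu_{A}\boxtimes\mu_{B}}(z)/(N\eta)}+(N\eta)^{-1}\bigr)$. Parts (1) and (2) of Lemma \ref{prop:stabN} provide $\absv{\Theta_{ii}(z)}\lesssim 1$ (using that $a_{i}$ is bounded, $\absv{\Omega_{B}(z)}\sim 1$ and $\min_{i}\absv{a_{i}-\Omega_{B}(z)}\gtrsim 1$) and $\im m_{\mu_{A}\boxtimes\mu_{B}}(z)\lesssim 1$, so the right-hand side is $\rO_{\prec}(1)$. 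Therefore $\absv{\ub_{k}(i)}^{2}\prec \eta=N^{-1+\xi}$, and since $\xi>0$ was arbitrary this upgrades to $\absv{\ub_{k}(i)}^{2}\prec 1/N$. The stochastic domination is preserved under a union bound over the $\rO(N^{2})$ pairs $(k,i)$, yielding $\max_{i}\absv{\ub_{k}(i)}^{2}\prec 1/N$ uniformly in $k\leq cN$. The argument for $\max_{\mu}\absv{\vb_{k}(\mu)}^{2}$ is identical with $\wt\caG_{\mu\mu}$ in place of $\wt G_{ii}$.

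The only point requiring a bit of care is verifying that the spectral parameter $z=\lambda_{k}+\ii\eta$ actually lies inside the domain $\caT_{\tau}(\eta_{L},\eta_{U})$ on which the local law (\ref{locallaw_edgeentrywise}) is stated. Rigidity (Lemma \ref{thm_rigidity}) locates $\lambda_{k}$ within $\rO_{\prec}(k^{-1/3}N^{-2/3})$ of the quantile $\gamma_{k}^{*}$, which for $1\leq k\leq cN$ with $c<1/2$ lies in the upper portion of $\supp(\mu_{A}\boxtimes\mu_{B})$; combined with the absolute continuity and strict positivity of the density from Lemma \ref{lem_prop_fc}, one can choose $\tau$ depending only on $c$ so that $\lambda_{k}\geq E_{+}-\tau$ with high probability, placing $z$ in $\caT_{\tau}(\eta_{L},\eta_{U})$. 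This bookkeeping is essentially the only non-routine aspect of the proof; the rest follows from a direct application of Theorem \ref{prop_linearlocallaw}.
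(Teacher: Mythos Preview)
Your argument is correct and is precisely the standard resolvent-based delocalization proof; the paper itself does not reproduce it but simply cites Theorem~2.16 of \cite{DJ1}, where the same argument is carried out. The one technicality you raise about the domain is real but minor: the constraint $\tau<(E_{+}-E_{-})/2$ in \eqref{eq_fundementalset} limits how far into the bulk $\caT_{\tau}(\eta_{L},\eta_{U})$ reaches, so for $c$ close to $1/2$ one may need the local law on a slightly larger spectral window than literally stated here, but this is already available in the cited reference.
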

\begin{proof}
See Theorem 2.16 of \cite{DJ1}. 
\end{proof}

\subsection{Proof strategy}\label{sec_proofstartegy}
In this subsection, we summarize the proof strategies. We focus on explaining how to adapt the techniques of the spiked covariance matrix model \cite{Bloemendal2016} and separable  
covariance matrix model \cite{DYaos} to obtain the results for our spiked multiplicative model. Analogous strategies have also been used to study the deformed Wigner matrix in \cite{KY13,MR3262497}, general spiked sample covariance matrix in \cite{DINGRMTA} and signal-plus-noise matrix in \cite{MR4036038}.  

First, we discuss how to prove Theorem \ref{thm_outlier}. For the outlier eigenvalues, their locations satisfy a master equation in terms of the resolvent as summarized in Lemma \ref{lem_evmaster} below. {Recall (\ref{eq_spikes}). } Denote 
\begin{equation}\label{eq_defnudefnd}
\mathbf{U}=
\begin{pmatrix}
\mathbf{E}_r & 0 \\
0 & \mathbf{E}_s
\end{pmatrix}, \ \bm{\mathcal{D}}=
\begin{pmatrix}
D^a (D^a+1)^{-1} & 0 \\
0 & D^b (D^b+1)^{-1}
\end{pmatrix},
\end{equation}
where $\mathbf{E}_r=(\mathbf{e}_1, \cdots, \mathbf{e}_r)$, \ $\mathbf{E}_s=(\mathbf{e}_1, \cdots, \mathbf{e}_s)$, $D^a=\operatorname{diag}(d_1^a, \cdots, d_r^a)$ and $D^b=\operatorname{diag}(d_1^b, \cdots, d_s^b).$ Recall (\ref{matrixnotation}) and (\ref{eq_schurcomplement}). { Recall $\Hb(z)$ in (\ref{eq_BIGH}). Define $\wh \Hb \equiv \wh \Hb(z)$  as 
\begin{equation}\label{eq_wthdecomposition}
\wh \Hb(z):=\mathbf{P} \Hb(z) \mathbf{P} =
\begin{pmatrix}
0 & z^{1/2} \wh Y \\
z^{1/2}\wh Y^* & 0
\end{pmatrix}, \
\mathbf{P}=
\begin{pmatrix}
(1+D^a)^{1/2} & 0\\
0 & (1+D^b)^{1/2}
\end{pmatrix}.
\end{equation}
Correspondingly, we denote $\wh{\Gb}(z)=(\wh{\Hb}(z)-z)^{-1}.$ }

\begin{lem}\label{lem_evmaster}
If $x \neq 0$ is not an eigenvalue of $\mathcal{Q}_1,$ then it is an eigenvalue of $\wh{\mathcal{Q}}_1$ if and only if 
\begin{equation*}
\det (\bm{\mathcal{D}}^{-1}+x\mathbf{U}^* \mathbf{G}(x) \mathbf{U} )=0. 
\end{equation*} 
\end{lem}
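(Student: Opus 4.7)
The plan is to reduce the eigenvalue equation for $\widehat{\mathcal{Q}}_1$ to a finite-rank determinantal equation via the linearization $\widehat{\Hb}(x)$ and a Woodbury-type identity, analogously to the arguments in \cite{Bloemendal2016, DYaos}.

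First I would recall that for $x \neq 0$ the Schur complement computation
\begin{equation*}
\det(\Hb(x) - xI) = (-x)^{N}\det(\mathcal{Q}_1 - xI)
\end{equation*}
shows that $x$ is an eigenvalue of $\mathcal{Q}_1$ iff $\det(\Hb(x) - xI) = 0$, and similarly $x$ is an eigenvalue of $\widehat{\mathcal{Q}}_1$ iff $\det(\widehat{\Hb}(x) - xI) = 0$. So the task is to relate the latter to $\Gb(x) = (\Hb(x)-xI)^{-1}$.

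Next, using $\widehat{\Hb}(x) = \mathbf{P}\Hb(x)\mathbf{P}$ from \eqref{eq_wthdecomposition} and the invertibility of $\mathbf{P}$, I would write
\begin{equation*}
\det(\widehat{\Hb}(x)-xI) = \det(\mathbf{P})^{2}\det\bigl(\Hb(x) - x\mathbf{P}^{-2}\bigr).
\end{equation*}
The key structural observation is that $\mathbf{P}^{2} - I$ has rank $r+s$ and factors through $\mathbf{U}$: with $D = \mathrm{diag}(D^{a},D^{b})$ one has $\mathbf{P}^{2} = I + \mathbf{U}D\mathbf{U}^{*}$, using that $\mathbf{E}_{r}D^{a}\mathbf{E}_{r}^{*}$ reproduces the top-left block of $\mathbf{P}^{2}-I$ (and likewise for $\mathbf{E}_{s}$). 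Since $\mathbf{U}^{*}\mathbf{U} = I_{r+s}$, the Sherman--Morrison--Woodbury identity yields
\begin{equation*}
\mathbf{P}^{-2} = I - \mathbf{U}\,D(I+D)^{-1}\mathbf{U}^{*} = I - \mathbf{U}\bm{\mathcal{D}}\mathbf{U}^{*},
\end{equation*}
so that $\Hb(x) - x\mathbf{P}^{-2} = (\Hb(x) - xI) + x\mathbf{U}\bm{\mathcal{D}}\mathbf{U}^{*}$.

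Finally, under the assumption that $x$ is not an eigenvalue of $\mathcal{Q}_1$, $\Hb(x)-xI$ is invertible and I can factor it out and apply Sylvester's determinant identity $\det(I + AB) = \det(I + BA)$ with $A = x\mathbf{U}\bm{\mathcal{D}}$ and $B = \mathbf{U}^{*}\Gb(x)$ to get
\begin{equation*}
\det\bigl(\Hb(x) - x\mathbf{P}^{-2}\bigr) = \det(\Hb(x)-xI)\,\det\bigl(I_{r+s} + x\mathbf{U}^{*}\Gb(x)\mathbf{U}\bm{\mathcal{D}}\bigr),
\end{equation*}
and a factor of $\det(\bm{\mathcal{D}})$ can be pulled out to rewrite the last determinant as $\det(\bm{\mathcal{D}}^{-1} + x\mathbf{U}^{*}\Gb(x)\mathbf{U})\det(\bm{\mathcal{D}})$. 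Since $\det(\mathbf{P})^{2}$, $\det(\bm{\mathcal{D}})$, and $\det(\Hb(x) - xI)$ are all nonzero by the standing hypothesis and the fact that all spikes satisfy $d^{a}_{k},d^{b}_{j}\neq 0$, one concludes that $x$ is an eigenvalue of $\widehat{\mathcal{Q}}_1$ iff $\det(\bm{\mathcal{D}}^{-1} + x\mathbf{U}^{*}\Gb(x)\mathbf{U}) = 0$.

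The proof is largely formal once the linearization and the Woodbury step are set up; the only delicate point, and the main thing to verify carefully, is the algebraic identity $\mathbf{P}^{2} = I + \mathbf{U}D\mathbf{U}^{*}$ and the bookkeeping that turns the $2N$-dimensional determinant into an $(r+s)$-dimensional one via Sylvester's identity. No analytic input (local laws, rigidity, etc.) is needed for this lemma.
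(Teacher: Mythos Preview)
Your proof is correct and follows essentially the same route as the paper: linearize via $\widehat{\Hb}(x)=\mathbf{P}\Hb(x)\mathbf{P}$, use $I-\mathbf{P}^{-2}=\mathbf{U}\bm{\mathcal{D}}\mathbf{U}^{*}$, factor out the invertible $\Hb(x)-xI$, and apply Sylvester's identity $\det(I+AB)=\det(I+BA)$ to collapse to the $(r+s)\times(r+s)$ determinant. The only cosmetic difference is that you spell out the Schur complement for the linearization and the Woodbury computation of $\mathbf{P}^{-2}$ explicitly, whereas the paper writes $1-\mathbf{P}^{-2}$ directly and cites the eigenvalue structure of $z^{-1/2}\widehat{\Hb}$.
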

\begin{proof}
{By the linearization construction in (\ref{eq_wthdecomposition}), we find that the eigenvalues of $z^{-1/2} \widehat{\mathbf{H}}$ are given by 
\begin{equation*}
\pm \sqrt{\lambda_1(\widehat{\mathcal{Q}}_1)}, \  \pm \sqrt{\lambda_2(\widehat{\mathcal{Q}}_1)}, \cdots, \pm \sqrt{\lambda_N(\widehat{\mathcal{Q}}_1)}. 
\end{equation*}
Hence, it is clear that $x>0$ is an eigenvalue of $\widehat{\mathcal{Q}}_1$ if and only if 
\begin{equation*}
\det(\widehat{\mathbf{H}}(x)-x)=0. 
\end{equation*}
Since $\mathbf{P}$ and  $\bm{\mathcal{D}}$ are always invertible, $x \neq 0$ is an eigenvalue of $\widehat{\mathbf{H}}=\mathbf{P} \mathbf{H} \mathbf{P}$ if and only if    
\begin{align*}
0&=\det(\mathbf{PHP}-x)=\det\Big(\mathbf{P}(\mathbf{H}-\mathbf{P}^{-2}x)\mathbf{P}\Big)=\det(\mathbf{P}^2) \det(\mathbf{G}(x))\det \left(1+x\mathbf{G}(x)(1-\mathbf{P}^{-2}) \right) \\
&=\det(\mathbf{P}^2) \det(\mathbf{G}(x))\det \left(1+x\mathbf{G}(x) \mathbf{U} \bm{\mathcal{D}} \mathbf{U}^* \right)=\det(\mathbf{P}^2) \det(\mathbf{G}(x)) \det \Big(1+x \mathbf{U}^* \mathbf{G}(x) \mathbf{U} \bm{\mathcal{D}}  \Big) \nonumber \\
& =\det(\mathbf{P}^2) \det(\mathbf{G}(x))\det(\bm{\mathcal{D}})\det(\bm{\mathcal{D}}^{-1}+x \mathbf{U}^* \mathbf{G}(x) \mathbf{U}),
\end{align*}
where in the second step we used $\det(1+AB)=\det(1+BA)$. We can then conclude that proof as $x$ is not an eigenvalue of $\mathcal{Q}_1.$ 
}
\end{proof} 
Heuristically, by Theorem \ref{prop_linearlocallaw} and Lemma \ref{lem_evmaster}, an outlier location $x > E_+$ should satisfy the condition that 
\begin{equation}\label{eq_determinantexplicitform}
\prod_{i=1}^r \left( \frac{d_i^a+1}{d_i^a}+\frac{\Omega_B(x)}{a_i-\Omega_B(x)} \right)  \prod_{j=1}^s \left( \frac{d_j^b+1}{d_j^b}+\frac{\Omega_A(x)}{b_j-\Omega_A(x)} \right)=0.
\end{equation} 
Therefore, solving (\ref{eq_determinantexplicitform}) will yield the locations of the outlier eigenvalues. Moreover,
according to \eqref{eq_localquadtratic} and (\ref{eq_closenessofomega}), we find that
\begin{equation*}
\frac{d_i^a+1}{d_i^a}+\frac{\Omega_B(x)}{a_i-\Omega_B(x)}=0\quad\text{for some $x>E_{+}$}\quad\Longleftrightarrow\quad \frac{d_i^a+1}{d_i^a}+\frac{\Omega_{\beta}(E_{+})}{a_i-\Omega_{\beta}(E_{+})}<0,
\end{equation*}
which in turn is equivalent to $\wh{a}_i>\Omega_{\beta}(E_{+}).$ Similar calculation holds for $\wh{b}_j, 1 \leq j \leq s. $ Furthermore,  to obtain the convergence rates for the outlier eigenvalues, it suffices to apply the strategy developed in \cite{Bloemendal2016}. The proof consists of the following three steps: (1). Construct the permissible regions which contain all the eigenvalues of $\widehat{\mathcal{Q}}_1$ with high probability (c.f. Lemma \ref{lem_permissibleregion}). This step enables us to find the a union of sets in which all the eigenvalues including the outliers lie. (2). Apply a counting argument using Rouch{\' e}'s theorem to a special case where the algebraic multiplicities of the spikes are all unity to show that each connected component of the permissible region contains the correct number of the eigenvalues of $\widehat{\mathcal{Q}}_1.$ (3). Use a continuous interpolation argument to extend Step (2) to the general case with a careful discussion on the gaps in the permissible region for the general setting. We point out that Steps (2) and (3) are quite standard, for example see \cite{KY13, Bloemendal2016,DYaos}, and we will focus on explaining the differences lying in Step (1) in the proof. For the extremal non-outlier eigenvalues, the proof makes use of the results in Steps (2) and (3). The details will be provided in Section \ref{sec_thmoutlier}. 

Second, we explain how to justify Theorem \ref{thm_eigenvaluesticking}. The proof again consists of three steps similar to those described earlier. In the first step, we construct the permissible regions for the eigenvalues; see Lemma \ref{lem_nonoutlierpermissible} below for more details. In the second step, we prove our results to a special case (c.f. Lemma \ref{lem_nonoutcounting}) and in the third step, we prove Theorem \ref{thm_eigenvaluesticking} using a continuity argument. Compared to the outlier eigenvalues, the second step is more complicated since Rouch{\' e}'s theorem is not applicable. Instead, we need to employ a perturbation argument. The details can be found in Section \ref{sec_proofsticking}.

Third, we discuss how to prove Theorem \ref{thm_outlieereigenvector}. Recall (\ref{eq_wthdecomposition}) and $\widehat{\mathbf{G}}(z)$ defined around it. We first provide some useful expressions.  By a discussion similar to (\ref{eq_schurcomplement}) and the singular value decomposition (SVD) of $Y$, we have that 
\begin{align}\label{eq_linearizationspectral}
\wh{\Gb}_{ij}=\sum_{k=1}^N \frac{\wh{\mathbf{u}}_k(i) \wh{\ub}_k^*(j)}{\hlam_k-z}, \  \wh{\Gb}_{\mu \nu}=\sum_{k=1}^N \frac{\wh{\vb}_k(\mu) \wh{\vb}_k^*(\nu)}{\hlam_k-z}, 
\end{align}
\begin{align*}
\wh{\Gb}_{i \mu}=\frac{1}{\sqrt{z}} \sum_{k=1}^N \frac{\sqrt{\hlam_k} \wh{\ub}_k(i)\wh{\vb}_k^*(\mu)  }{\hlam_k-z}, \ \wh{\Gb}_{\mu i}=\frac{1}{\sqrt{z}} \sum_{k=1}^N \frac{\sqrt{\hlam_k} \wh{\vb}_k(\mu)\wh{\ub}_k^*(i)  }{\hlam_k-z}.
\end{align*}
Following the strategy of \cite{Bloemendal2016, DYaos}, the starting point is an integral representation. Specifically, by (\ref{eq_linearizationspectral}), Lemma \ref{lem_contour} below and Cauchy's integral formula, for some properly chosen contour $\Gamma$ around the outliers,  we have that 
\begin{equation}\label{eq_intergralrepresentation}
\langle \mathbf{e}_i, \mathcal{P}_S \mathbf{e}_j \rangle=-\frac{1}{2 \pi \ii} \oint_{\Omega_B^{-1}(\Gamma)} \langle \bm{e}_i, \wh{\Gb}(z) \bm{e}_j \rangle \dd z, 
\end{equation}
where $\bm{e}_i$ and $\bm{e}_j$ are the natural embeddings of $\mathbf{e}_i$ and $\mathbf{e}_j$ in $\mathbb{C}^{2N}.$ Next, we provide an identity for $\Ub^* \wh{\Gb}(z) \Ub$ in terms of $\Ub^* \Gb(z) \Ub$ (recall (\ref{eq_defnudefnd})). For the matrices $\mathcal{A}, \mathcal{S}, \mathcal{B}$, and $\mathcal{E}$ of conformable dimensions, by Woodbury matrix identity, we have 
\begin{equation}\label{eq_woodbury}
(\mathcal{A}+\mathcal{S} \mathcal{B} \mathcal{E})^{-1}=\mathcal{A}^{-1}-\mathcal{A}^{-1} \mathcal{S} (\mathcal{B}^{-1}+\mathcal{E} \mathcal{A}^{-1} \mathcal{S})^{-1} \mathcal{E} \mathcal{A}^{-1},
\end{equation} 
as long as all the operations are legitimate. Moreover, when $\mathcal{A}+\mathcal{B}$ is non-singular, we have that 
\begin{equation}\label{eq_hua}
\mathcal{A}-\mathcal{A}(\mathcal{A}+\mathcal{B})^{-1} \mathcal{A}=\mathcal{B}-\mathcal{B}(\mathcal{A}+\mathcal{B})^{-1} \mathcal{B}.
\end{equation}
By (\ref{eq_wthdecomposition}), (\ref{eq_linearizationspectral}) and the matrix identities (\ref{eq_woodbury}) and (\ref{eq_hua}), we have that 
\begin{align}\label{eq_keyexpansionev}
\Ub^* \wh{\Gb}(z) \Ub&=\Ub^* \mathbf{P}^{-1} \left( \Hb-z+z(I-\mathbf{P}^{-2}) \right)^{-1} \mathbf{P}^{-1} \Ub=\mathbf{U}^*\mathbf{P}^{-1}(\Gb^{-1}(z)+z\Ub \bm{\mathcal{D}} \Ub^*)^{-1} \mathbf{P}^{-1} \Ub \nonumber \\
&= \Ub^* \mathbf{P}^{-1} \left[ \Gb(z)-z\Gb(z) \Ub \frac{1}{\bm{\mathcal{D}}^{-1}+z \Ub^* \Gb(z) \Ub} \Ub^* \Gb(z) \right] \mathbf{P}^{-1} \Ub \nonumber \\
&=(I-\bm{\caD})^{1/2} \left[ \Ub^* \Gb(z) \Ub-z \Ub^* \Gb(z) \Ub \frac{1}{\bm{\mathcal{D}}^{-1}+z\Ub^* \Gb(z) \Ub} \Ub^* \Gb(z) \Ub \right] (I-\bm{\caD})^{1/2} \nonumber \\
&= \frac{1}{z}(I-\bm{\caD})^{1/2}  \left[{\bm{\mathcal{D}}^{-1}}-\bm{\mathcal{D}}^{-1} \frac{1}{\bm{\mathcal{D}}^{-1}+z\Ub^* \Gb(z) \Ub} \bm{\mathcal{D}}^{-1} \right] (I-\bm{\caD})^{1/2},
\end{align}
where we used $(I-\bm\caD)=\mathbf{P}^{-2}$ and $\mathbf{P}$ is defined in (\ref{eq_wthdecomposition}). On one hand, we can expand (\ref{eq_intergralrepresentation}) using (\ref{eq_keyexpansionev}). On the other hand, since (\ref{eq_keyexpansionev}) can be well-estimated  using the local laws Theorem \ref{prop_linearlocallaw}, the limit of (\ref{eq_intergralrepresentation}) can be calculated using the residue theorem.  To obtain the convergence rates, we need to apply a high order resolvent expansion for (\ref{eq_keyexpansionev}) as in (\ref{eq_resolventexpansion}). In the actual proof, we split it into two steps due to some technicalities. In the first step, we prove the results under a slightly stronger assumption than Assumption \ref{assum_outlier} that 
\begin{equation}\label{condition_nonoverone}
	\wh{a}_i-\Omega_{\beta}(E_{+}) \geq N^{-1/3+\tau_1}, \  \wh{b}_j-\Omega_{\alpha}(E_{+}) \geq N^{-1/3+\tau_1}, \ i, j \in \mathcal{O}^+,
\end{equation}
where $\tau_1>0$ is some small positive constant. The details are provided in Proposition \ref{prop_maineigenvector} and proved in  Section \ref{sec_prove36withconstrain}. Then in the second step, we remove (\ref{condition_nonoverone}) using a finer estimate to conclude the proof of Theorem \ref{thm_outlieereigenvector}. The details are provided in Section \ref{sec_prove36withoutconstrain}.

Finally, we explain how to handle the non-outlier eigenvectors in Theorem \ref{thm_nonoutliereigenvector}. In this setting, the residual calculus fails since the contour representation in (\ref{eq_intergralrepresentation}) is invalid. Instead, we utilize the inequality
\begin{equation}\label{nonoutlier_keyexpansion}
\left| \langle \vb, \widehat{\ub}_k  \rangle \right|^2 \leq \eta \left( \vb^* \sum_{k=1}^N \frac{\eta \widehat{\ub}_k \widehat{\ub}_k^*}{|\widehat{\lambda}_k-z_k|^2} \vb \right)=\eta \im \vb^* \widehat{G}(z_k) \vb,
\end{equation}
where $\widehat{G}(z)=(\widehat{\mathcal{Q}}_1-z)^{-1}$ and $z_k=\widehat{\lambda}_k+\ii \eta.$ Consequently, it suffices to obtain an accurate estimate for $\im \vb^* \widehat{G}(z_k) \vb.$  We again use the local law Theorem \ref{prop_linearlocallaw} combining with a resolvent expansion to establish the delocalization bounds. The details can be found in Section \ref{sec_proofofnonoutliereigenvector}.

\begin{rem}\label{rem_localize}
We remark that the results regarding outlier eigenvalues and eigenvectors in  Theorems \ref{thm_outlier} and \ref{thm_outlieereigenvector} remain true even if we loosen Assumption \ref{assu_limit} in the following sense: (1). 
(iii) and (v) can be replaced by 
	\beqs
		\rho_{\alpha(\beta)}(x)\sim (E^{\alpha(\beta)}_{+}-x)^{t_{+}^{\alpha(\beta)}},\,\, x\in[E_{+}^{\alpha(\beta)}-c,E_{+}^{\alpha(\beta)}]\AND \sup\supp\mu_{A(B)}\to E^{\alpha(\beta)}_{+},
	\eeqs 
	for a small constant $c>0$, and (2). the error in (\ref{eq_epsilondefinition}) can be replaced by $N^{-2/3-\epsilon}.$ However, under such weaker assumptions, the results in Theorems \ref{thm_eigenvaluesticking} and \ref{thm_nonoutliereigenvector} will be weakened, especially (\ref{eq_stickingbound}) and (\ref{eq_nonoutliereigenvector}) only hold up to $i \leq N^{\tau}$ for some small $0<\tau<1.$ Technically, this is because the proof of Theorems \ref{thm_outlier} and \ref{thm_outlieereigenvector} only rely on the square root behavior of $\mu_A \boxtimes \mu_B$ and the local laws near the edges and outside the bulk of the spectrum, i.e., Theorem \ref{prop_linearlocallaw} holds for $\mathcal{T}_\tau(n^{-2/3+\epsilon}, \eta_U)$ and $\mathcal{T}_{\tau}(\eta_U).$ For the square root decay behavior, according to \cite[Equation (A.17)]{DJ1}, 
\begin{equation}\label{eq_squarrootextendsion}
	\operatorname{Im} m_{\mu_{A}\boxtimes\mu_{B}}(z)=\operatorname{Im} m_{\mu_{\alpha}\boxtimes\mu_{\beta}}(z)+\mathrm{O}\left(\frac{\boldsymbol{d}}{\sqrt{|z-E_{+}|}}\right).
\end{equation}
In order to ensure that the error is dominated near the edge by $\operatorname{Im} m_{\mu_{\alpha}\boxtimes\mu_{\beta}}(z)\sim \sqrt{|z-E_{+}|}$ for all $\operatorname{Im}z\gg N^{-2/3}$, we need $\boldsymbol{d}\ll N^{-2/3}$. Armed with the square root decay behavior, the proof only needs the weakened conditions mentioned above. 

In contrast, in order to fully prove Theorems \ref{thm_eigenvaluesticking} and \ref{thm_nonoutliereigenvector}, we need (\ref{eq_epsilondefinition}) so that (\ref{eq_squarrootextendsion}) implies $\operatorname{Im} m_{\mu_{\alpha}\boxtimes\mu_{\beta}}(z)$ is the main term  for all $\operatorname{Im} z \gg \eta_L$.
We also need  the current Theorem \ref{prop_linearlocallaw} which relies on Assumption \ref{assu_limit}; see \cite{Bloemendal2016,KY13} for more details.   
\end{rem}\normalcolor

\section{Proof of Theorems \ref{thm_outlier} and \ref{thm_eigenvaluesticking}: the eigenvalue statistics} \label{sec_proofofspikedmodel}
In this section, we prove the main results of Section \ref{sec_spikedmodel} following the strategies outlined in Section \ref{sec_proofstartegy}. Due to similarity, we focus on explaining the main differences from the counterparts in \cite{Bloemendal2016, DYaos}  and how to adapt their proof strategies.


\subsection{Proof of Theorem \ref{thm_outlier}}\label{sec_thmoutlier}

In this subsection, we prove the results for the outlier and extremal non-outlier eigenvalues. By Theorem \ref{prop_linearlocallaw} and Lemma \ref{thm_rigidity}, for any fixed small constant $\epsilon>0,$ we can choose a high probability event $\Xi \equiv \Xi(\epsilon)$ where the following estimates hold:
\begin{equation}\label{eq_proofinsidelocallaw}
\mathbf{1}(\Xi) \norm{\Ub^*(\Gb(z)-\Theta(z))\Ub} \leq N^{\epsilon/2} \left( \sqrt{\frac{\im m_{\mu_A \boxtimes \mu_B}}{N \eta}}+\frac{1}{N \eta} \right), \ z \in {\mathcal{T}_\tau(\eta_L, \eta_U)} ;
\end{equation}
\begin{equation}\label{eq_proofoutlierlocallaw}
\mathbf{1}(\Xi) \norm{\Ub^* (\Gb(z)-\Theta(z))\Ub} \leq N^{-1/2+\epsilon/2} (\kappa+\eta)^{-1/4}, \ z \in {\mathcal{T}_{\tau}(\eta_U)};
\end{equation}
\begin{equation}\label{eq_proofoutlier}
\mathbf{1}(\Xi)|\lambda_i(\mathcal{Q}_1)-E_+| \leq N^{-2/3+\epsilon}, \ 1 \leq i \leq \varpi. 
\end{equation}
We will restrict our proof to $\Xi$ in what follows and hence the discussion below will be entirely deterministic. We first prepare some notations following the proof of \cite[Theorem 3.6]{DYaos}.
For any fixed constant $\epsilon>0,$ we denote in index sets
\begin{equation}\label{eq_defnsetsab}
\mathcal{O}_{\epsilon}^{(a)}:=\left\{{1 \leq i \leq N}: \wh{a}_i-\Omega_{\beta}(E_{+}) \geq N^{-1/3+\epsilon} \right\}, \ \mathcal{O}_{\epsilon}^{(b)}:=\left\{N+1 \leq \mu \leq 2N, \wh{b}_\mu-\Omega_{\alpha}(E_{+}) \geq N^{-1/3+\epsilon} \right\}.
\end{equation} 
Here and after, we use $\wh{b}_{\mu}:=\wh{b}_{\mu-N}$ for $\mu \in \mathcal{I}_2$. 
Recall that the eigenvalues of $\wh{A}$ and $\wh{B}$ are ordered in the decreasing fashion. By definition, it is easy to see that 
\begin{equation*}
\sup_{\mu \notin \mathcal{O}_{\epsilon}^{(b)}} (\wh{b}_{\mu}-\Omega_{\alpha}(E_{+}))  \lesssim N^{-1/3+\epsilon}, \ \inf_{\mu \in \mathcal{O}_{\epsilon}^{(b)}} (\wh{b}_{\mu}-\Omega_{\alpha}(E_{+})) \gtrsim N^{-1/3+\epsilon}.
\end{equation*} 
%
Moreover, since we are mainly interested in the outlier and extremal non-outlier eigenvalues, we use the convention that $\Omega_{B}^{-1}(\wh{a}_i)=E_+, i \geq r$ and $\Omega_A^{-1}(\wh{b}_\mu)=E_+, \mu \geq N+s.$  Throughout the proof, we will need the following estimate following from  (\ref{eq_localquadtratic}) and (\ref{eq_closenessofomega}), for $i \in \caO_{\epsilon}^{(a)},$
\begin{equation}\label{eq_edgeestimationrough}
\Omega_B^{-1}(\wh{a}_i)-E_+ \sim (\wh{a}_i-\Omega_{\beta}(E_{+}))^2.
\end{equation}
Indeed, when $\Omega_B^{-1}(\wh{a}_i)-E_+ \leq \varsigma_1$ for some sufficiently small constant $0<\varsigma_1<1,$ using (\ref{eq_localquadtratic}), (\ref{eq_closenessofomega}) and the fact $\Omega_B(\cdot)$ is monotone increasing, we readily see that
\begin{equation}\label{eq_smallerregion}
\wh{a}_i \sim \Omega_{\beta}(E_{+})+\gamma \sqrt{\Omega_B^{-1}(\wh{a}_i)-E_+}+N^{-1/2+\epsilon},
\end{equation}
where $\gamma>0$ is some constant. This immediately implies (\ref{eq_edgeestimationrough}) using Assumption \ref{assum_outlier}. On the other hand, when $\Omega^{-1}_B(\wh{a}_i)-E_+ \geq \varsigma_1,$ since $\Omega_B(\cdot)$ is increasing, we obtain that 
\begin{equation*}
\wh{a}_i \geq \Omega_B(E_++\varsigma_1) \sim \Omega_{\beta}(E_{+})+\gamma' \sqrt{\varsigma_1}+N^{-1/2+\epsilon}.
\end{equation*} 
This proves the claim (\ref{eq_edgeestimationrough}). A consequence of \eqref{eq_edgeestimationrough} is that
\begin{equation*}
	\sup_{i \notin \mathcal{O}_{\epsilon}^{(a)}} \Omega_B^{-1}(\wh{a}_i)  \leq  \inf_{\mu \in \mathcal{O}_{\epsilon}^{(b)}} \Omega_A^{-1}(\wh{b}_{\mu})+N^{-2/3+2\epsilon}.
\end{equation*}
Similarly, we have that
\begin{equation*}
\ \sup_{\mu \notin \mathcal{O}_{\epsilon}^{(b)}} \Omega_A^{-1}(\wh{b}_\mu) \leq \inf_{i \in \mathcal{O}_{\epsilon}^{(a)}} \Omega_B^{-1}(\wh{a}_i)+N^{-2/3+2\epsilon}. 
\end{equation*}

An advantage of the above labeling is that the largest outliers of $\wh{\mathcal{Q}}_1$ can be labelled according to $i \in \mathcal{O}_\epsilon^{(a)}$ and $\mu \in \mathcal{O}_{\epsilon}^{(b)}.$ Analogously to (D.9) and (D.10) of \cite{DYaos}, we find that to prove Theorem \ref{thm_outlier}, it suffices to prove that for arbitrarily small constant $\epsilon>0,$ there exists some constant $C>0$ so that 
\begin{equation}\label{eq_reduceproofoutlier}
\mathbf{1}(\Xi) \left| \wh{\lambda}_{\pi_a(i)}-\Omega_B^{-1}(\wh{a}_i) \right| \leq C N^{-1/2+2\epsilon} \Delta_1(\wh{a}_i), \ \ \mathbf{1}(\Xi) \left| \wh{\lambda}_{\pi_b(\mu)}-\Omega_A^{-1}(\wh{b}_\mu) \right| \leq C N^{-1/2+2\epsilon} \Delta_2(\wh{b}_\mu),
\end{equation}
for all $i \in \mathcal{O}_{4 \epsilon}^{(a)}$ and $\mu \in \mathcal{O}_{4\epsilon}^{(b)},$ and \begin{equation}\label{eq_reducebulkvalue}
	\mathbf{1}(\Xi)\left| \wh{\lambda}_{\pi_a(i)}-E_+  \right| \leq CN^{-2/3+12\epsilon}, \  \mathbf{1}(\Xi)\left| \wh{\lambda}_{\pi_b(\mu)}-E_+  \right| \leq CN^{-2/3+12\epsilon},
\end{equation}   
for all $i \in \left\{1,2,\cdots,r \right\} \backslash \mathcal{O}_{4 \epsilon}^{(a)}$ and $\mu \in \left\{N+1,\cdots, N+s \right\} \backslash \mathcal{O}_{4 \epsilon}^{(b)}.$ Here we used the short-hand notations 
\begin{equation}\label{eq_shorthandnotationsdelta}
\Delta_1(\wh{a}_i):=(\wh{a}_i-\Omega_{\beta}(E_{+}))^{1/2}, \ \Delta_2(\wh{b}_\mu):=(\wh{b}_\mu-\Omega_{\alpha}(E_{+}))^{1/2} , 
\end{equation}
and $\pi_b(\mu)\equiv \pi_{b}(\mu-N)$ for $\mu\in \caI_{2}$. 


We now conclude the proof following the four steps outlined in Section \ref{sec_proofstartegy}. We will focus on explaining the first part, Step 1,  since it differs the most from its counterpart of the proof of \cite[Theorem 3.6]{DYaos}, and briefly sketch Steps 2--4.  

\noindent{\bf Step 1:} For each $1 \leq i \leq r^+,$ we define the permissible intervals
\begin{equation*}
\mathrm{I}_i^{(a)}:=\left[\Omega_B^{-1}(\wh{a}_i)-N^{-1/2+\epsilon} \Delta_1(\wh{a}_i), \ \Omega_B^{-1}(\wh{a}_i)+N^{-1/2+\epsilon} \Delta_1(\wh{a}_i) \right].
\end{equation*}
Similarly, for each $1 \leq \mu-N \leq s^+,$ we denote 
\begin{equation*}
\mathrm{I}_\mu^{(b)}:=\left[\Omega_A^{-1}(\wh{b}_\mu)-N^{-1/2+\epsilon} \Delta_2(\wh{b}_\mu), \ \Omega_A^{-1}(\wh{b}_\mu)+N^{-1/2+\epsilon} \Delta_2(\wh{b}_\mu) \right].
\end{equation*}
Then we define
\begin{equation}\label{I0}
\mathrm{I}:=\mathrm{I}_0 \cup \Big(\bigcup_{i \in \mathcal{O}^{(a)}_{\epsilon}}\mathrm{I}_i^{(a)}\Big) \cup  \Big(\bigcup_{\mu \in \mathcal{O}^{(b)}_{\epsilon}}\mathrm{I}_\mu^{(b)}\Big) ,\quad  \mathrm{I}_0:=\left[0, E_+ + N^{-2/3+3\epsilon}\right].
\end{equation}
The main task of this step is to prove the following lemma. 
\begin{lem}\label{lem_permissibleregion} On the event $\Xi$, the complement of $\mathrm{I}$ contains no eigenvalues of $\wh{\mathcal{Q}}_1.$ 
\end{lem}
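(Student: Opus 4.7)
My plan is to combine the master equation in Lemma~\ref{lem_evmaster} with the edge/outlier local law (\ref{eq_proofoutlierlocallaw}) and reduce the non-vanishing of a determinant to an explicit scalar calculation. First, I note that on the event $\Xi$, the rigidity bound (\ref{eq_proofoutlier}) places every eigenvalue of $\mathcal{Q}_1$ in $\mathrm{I}_0$, so for any $x \notin \mathrm{I}_0$ (in particular any $x \notin \mathrm{I}$), Lemma~\ref{lem_evmaster} applies and $x$ is an eigenvalue of $\wh{\mathcal{Q}}_1$ if and only if
\begin{equation*}
F(x) \deq \det\bigl(\bm{\mathcal{D}}^{-1} + x\,\Ub\adj \Gb(x) \Ub\bigr) = 0.
\end{equation*}
Since $\mathbf{U}$ selects the top $r$ rows of $\mathcal{I}_1$ and top $s$ rows of $\mathcal{I}_2$, and $\Theta(x)$ is diagonal, a direct computation shows
\begin{equation*}
F_0(x) \deq \det\bigl(\bm{\mathcal{D}}^{-1} + x\,\Ub\adj \Theta(x) \Ub\bigr)
= \prod_{i=1}^{r} \frac{\wh{a}_i - \Omega_B(x)}{d_i^a\,(a_i - \Omega_B(x))} \prod_{j=1}^{s} \frac{\wh{b}_j - \Omega_A(x)}{d_j^b\,(b_j - \Omega_A(x))}.
\end{equation*}
So the task is to show $|F_0(x)| > |F(x) - F_0(x)|$ whenever $x \notin \mathrm{I}$, which by Lemma~\ref{prop:stabN}(1) (the denominators are of order one) reduces to lower bounding each scalar factor $|\wh{a}_i - \Omega_B(x)|$ and $|\wh{b}_j - \Omega_A(x)|$.

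The heart of the argument is this lower bound, done by case analysis. For indices $i \notin \mathcal{O}^{(a)}_{\epsilon}$ (and similarly for $\mu \notin \mathcal{O}^{(b)}_{\epsilon}$), I use the quadratic expansion (\ref{eq_localquadtratic}) together with the closeness estimate (\ref{eq_closenessofomega}) to conclude that for $x > E_+ + N^{-2/3+3\epsilon}$,
\begin{equation*}
\Omega_B(x) \geq \Omega_\beta(E_+) + c\sqrt{x - E_+} - \rO\bigl(N^{-1/3+\epsilon/2}\bigr) \geq \Omega_\beta(E_+) + c' N^{-1/3+3\epsilon/2},
\end{equation*}
while $\wh{a}_i - \Omega_\beta(E_+) < N^{-1/3+\epsilon}$, giving $|\wh{a}_i - \Omega_B(x)| \gtrsim N^{-1/3+3\epsilon/2}$. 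For indices $i \in \mathcal{O}^{(a)}_{\epsilon}$ but $x \notin \mathrm{I}_i^{(a)}$, I write $\Omega_B(x) - \wh{a}_i = \Omega_B(x) - \Omega_B(\Omega_B^{-1}(\wh{a}_i))$ and use the derivative bound $|\Omega_B'(\tilde x)| \sim |\tilde x - E_+|^{-1/2}$ from (\ref{eq_derivativecontrol}) together with $\Omega_B^{-1}(\wh{a}_i) - E_+ \sim \Delta_1(\wh{a}_i)^4$ from (\ref{eq_edgeestimationrough}). This gives $|\wh{a}_i - \Omega_B(x)| \gtrsim N^{-1/2+\epsilon}/\Delta_1(\wh{a}_i)$, which is the natural matching scale.

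Finally, to bound $|F(x) - F_0(x)|$, I write $F(x) = F_0(x)\det\bigl(I + M(x)^{-1} \Delta M(x)\bigr)$ where $M = \bm{\mathcal{D}}^{-1} + x\Ub\adj \Theta \Ub$ and $\Delta M = x \Ub\adj(\Gb - \Theta) \Ub$; by (\ref{eq_proofoutlierlocallaw}), $\|\Delta M\| \leq C N^{-1/2+\epsilon/2}(\kappa+\eta)^{-1/4}$, which at $\kappa \sim \Delta_1(\wh{a}_i)^4$ is of order $N^{-1/2+\epsilon/2}/\Delta_1(\wh{a}_i)$ --- exactly one factor of $N^{\epsilon/2}$ smaller than the lower bound on the corresponding factor of $F_0$. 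Expanding the perturbation determinant and using the lower bounds on all factors simultaneously shows $F(x) \neq 0$ outside $\mathrm{I}$. The main obstacle I anticipate is bookkeeping the product of factor bounds when several $\wh{a}_i$'s have comparable magnitudes (near-degenerate spikes) so that the combined error in $\det(I + M^{-1} \Delta M)$ remains subdominant; this is handled by multiplying the individual factor bounds and using that $r + s$ is fixed.
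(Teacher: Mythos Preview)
Your overall strategy matches the paper's: reduce via Lemma~\ref{lem_evmaster} to showing that the $(r+s)\times(r+s)$ matrix $M(x)=\bm{\mathcal{D}}^{-1}+x\Ub\adj\Theta(x)\Ub$ (which is diagonal) dominates the perturbation $\Delta M(x)=x\Ub\adj(\Gb-\Theta)\Ub$ in operator norm, so that the determinant cannot vanish. The paper phrases this as the scalar condition \eqref{eq_sufficientconverse}, i.e.\ $\min_{i}\absv{\wh a_i-\Omega_B(x)}\gg N^{-1/2+\epsilon/2}\kappa^{-1/4}$, which is exactly your $\norm{M^{-1}}\norm{\Delta M}\ll 1$ rewritten.

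There is, however, a genuine gap in your comparison step. You prove $\absv{\wh a_i-\Omega_B(x)}\gtrsim N^{-1/2+\epsilon}/\Delta_1(\wh a_i)$ for $i\in\mathcal{O}^{(a)}_\epsilon$, and then compare this against the error ``at $\kappa\sim\Delta_1(\wh a_i)^4$''. But $\kappa=\kappa(x)$ is fixed by $x$, not by $i$: for a single point $x\notin\mathrm{I}$ you must beat the \emph{same} error $N^{-1/2+\epsilon/2}\kappa(x)^{-1/4}$ with \emph{every} diagonal entry simultaneously. If $x$ sits just above $\mathrm{I}_0$ (so $\kappa(x)\sim N^{-2/3+3\epsilon}$) while some spike $\wh a_i$ is of order one (so $\Delta_1(\wh a_i)\sim 1$), your lower bound gives only $\sim N^{-1/2+\epsilon}$, whereas the error is $\sim N^{-1/3-\epsilon/4}$ --- the wrong direction. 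Your lower bound is correct but far too pessimistic in this regime; the actual value of $\absv{\wh a_i-\Omega_B(x)}$ there is $\sim 1$.

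The paper repairs exactly this point by splitting, for each $i$, according to whether $\Omega_B^{-1}(\wh a_i)\in[x-c\kappa,x+c\kappa]$. In the ``close'' case your MVT argument applies verbatim and moreover $\kappa\sim\Delta_1(\wh a_i)^4$ is justified, so your comparison goes through. In the ``far'' case one instead uses monotonicity of $\Omega_B$ and the square-root expansion to get $\absv{\Omega_B(x)-\wh a_i}\geq\absv{\Omega_B(x)-\Omega_B(x\pm c\kappa)}\sim\kappa^{1/2}$, and then $\kappa^{1/2}\gg N^{-1/2+\epsilon/2}\kappa^{-1/4}$ follows from $\kappa\geq N^{-2/3+3\epsilon}$. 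Adding this two-subcase split to your Case~2 closes the gap; the rest of your outline (including the $i\notin\mathcal{O}^{(a)}_\epsilon$ case and the determinant perturbation) is fine. The ``near-degenerate spikes'' bookkeeping you flag as the main obstacle is in fact a non-issue, since $M$ is diagonal and only $\min_i\absv{M_{ii}}$ matters.
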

\begin{proof}
By Lemma \ref{lem_evmaster}, (\ref{eq_proofoutlier}) and (\ref{eq_proofoutlierlocallaw}), we find that $x \notin \mathrm{I}_0$ is an eigenvalue of $\wh{\mathcal{Q}}_1$ if and only if 
\begin{equation}\label{eq_lefthandsidenonsingular}
\mathbf{1}(\Xi)(\bm{\mathcal{D}}^{-1}+x \mathbf{U}^* \Gb(x) \mathbf{U})=\mathbf{1}(\Xi)\left(\bm{\mathcal{D}}^{-1}+x \mathbf{U}^* \Theta(x) \mathbf{U}+\mathrm{O}(\kappa^{-1/4} N^{-1/2+\epsilon/2}) \right),
\end{equation} 
is singular. In light of (\ref{eq_determinantexplicitform}), it suffices to show that if $x \notin \mathrm{I},$ then 
\begin{equation}\label{eq_sufficientconverse}
\min\left\{ \min_{1 \leq i \leq r}\left| \frac{d_i^a+1}{d_i^a}+\frac{\Omega_B(x)}{a_i-\Omega_B(x)} \right|, \min_{1 \leq \mu-N \leq s } \left| \frac{d_\mu^b+1}{d_\mu^b}+\frac{\Omega_A(x)}{b_\mu-\Omega_A(x)} \right| \right\} \gg \kappa^{-1/4} N^{-1/2+\epsilon/2}.
\end{equation}
Indeed, when (\ref{eq_sufficientconverse}) holds, then the matrices on the left-hand side of (\ref{eq_lefthandsidenonsingular}) is non-singular. Note that 
\begin{align}
\frac{d_i^a+1}{d_i^a}+\frac{\Omega_B(x)}{a_i-\Omega_B(x)}=\frac{1}{d_i^a}-\frac{a_i}{\Omega_B(x)-a_i}& =\frac{a_i}{\Omega_B(\Omega_B^{-1}(\wh{a}_i))-a_i}-\frac{a_i}{\Omega_B(x)-a_i} \nonumber \\
&=\mathrm{O} \left( \left| \Omega_B(x)-\Omega_B(\Omega_B^{-1}(\wh{a}_i)) \right| \right), \label{eq_mvtcontrol}
\end{align}
where in the last equality we used (i) of Lemma \ref{prop:stabN}. The rest of the proof is devoted to controlling (\ref{eq_mvtcontrol}) via mean value theorem. 

First, we have that
\begin{equation} \label{eq_mvtdifference}
|x-\Omega_B^{-1}(\wh{a}_i)| \geq N^{-1/2+\epsilon} \Delta_1(\wt{a}_i), \ \text{for all} \ x \notin \mathrm{I}. 
\end{equation}
In fact, when $i \in \mathcal{O}_{\epsilon}^{(a)},$ (\ref{eq_mvtdifference}) holds by definition.  When $i \notin \mathcal{O}_{\epsilon}^{(a)},$ by (\ref{eq_edgeestimationrough}) and the fact that $\Omega_B(x)$ is monotone increasing when $x>E_+$, we have
\begin{equation*}
\Omega_B^{-1}(\wh{a}_i)-E_+ \lesssim  N^{-2/3+2\epsilon} \ll N^{-2/3+3 \epsilon}. 
\end{equation*}  

Now we return to the proof of (\ref{eq_sufficientconverse}). We divide our proof into two cases. If there exists a constant $c>0$ such that $\Omega_B^{-1}(\wh{a}_i) \notin [x-c\kappa, x+c\kappa].$ Since $\Omega_B(\cdot)$ is monotonically increasing on $(E_+, \infty),$ we have that 
\begin{equation*}
|\Omega_B(x)-\Omega_B(\Omega_B^{-1}(\wh{a}_i))| \geq |\Omega_B(x)-\Omega_B(x \pm \kappa)| \sim \kappa^{1/2} \gg N^{-1/2+\epsilon/2} \kappa^{-1/4},
\end{equation*} 
where in the second step we used (\ref{eq_derivativecontrol}) and (\ref{eq_closenessofomega}) with Cauchy's  integral formula when $x>E_+.$ On the other hand, if $\Omega_B^{-1}(\wh{a}_i) \in [x-c\kappa, x+c\kappa]$ such that $\Omega_B^{-1}(\wt{a}_i)-E_+ \sim \kappa,$ here $c<1$ is some small constant.  By (\ref{eq_edgeestimationrough}) and the fact $\wh{a}_i-\Omega_{\beta}(E_{+}) \geq N^{-1/3+\epsilon},$ we have that 
\begin{equation*}
\Omega_B^{-1}(\wh{a}_i)-E_+ \sim \Delta_1(\wh{a}_i)^4  \gg N^{-1/2+\epsilon} \Delta_1(\wh{a}_i).
\end{equation*}
Moreover, by (\ref{eq_derivativecontrol}) and (\ref{eq_closenessofomega}), we conclude that 
\begin{equation*}
|\Omega'_B(\xi)| \sim |\Omega_B'(\Omega_B^{-1}(\wh{a}_i))| \sim \Delta_1(\wh{a}_i)^{-2}, \ \xi \in \mathrm{I}_i^{(a)},
\end{equation*}
where we used (\ref{eq_edgeestimationrough}) in the second step. Since $\Omega_B$ is monotonically increasing on $(E_+, \infty),$ for $x \notin \mathrm{I}_i^{(a)},$ by (\ref{eq_mvtdifference}) and (\ref{eq_edgeestimationrough}), we conclude that 
\begin{align*}
|\Omega_B(x)-\Omega_B(\Omega_B^{-1}(\wh{a}_i))| &\geq |\Omega_B(\Omega_B^{-1}(\wh{a}_i) \pm N^{-1/2+\epsilon} \Delta_1(\wh{a}_i))-\Omega_B(\Omega_B^{-1}(\wh{a}_i))| 
\nonumber \\
& \sim N^{-1/2+\epsilon} \Delta_1(\wh{a}_i)^{-1} \gg N^{-1/2+\epsilon/2} \kappa^{-1/4}.  
\end{align*}
The $d_{\mu}^b$ term can be dealt with in the same way, and this completes our proof. 
\end{proof}

{
\noindent{\bf Step 2:} In this step we will show that each $\mathrm{I}_i^{(a)}$, $i \in  \mathcal{O}_\epsilon^{(a)}$, 
or $\mathrm{I}_\mu^{(b)}$, $\mu\in \mathcal{O}_\epsilon^{(b)}$, contains the right number of eigenvalues of $\widehat{\mathcal{Q}}_1$, under a {\it special case}; see \eqref{eq_multione} below. 
For simplicity, we relabel the indices in $\mathcal{O}_\epsilon^{(a)}\cup \mathcal{O}_\epsilon^{(b)}$ as $\wt\sigma_1, \cdots, \wt\sigma_{r_\epsilon}$, and call them $\epsilon$-{\it{spikes}}. Moreover, we assume that they correspond to classical locations of outliers as $x_1,\cdots, x_{r_\epsilon}$ (some of them are determined by $\Omega_B^{-1}(\widehat{a}_i)$, while others are given by $\Omega_A^{-1}(\widehat{b}_\mu)$), such that  
\begin{equation*}
x_1 \ge x_2 \ge \cdots \ge x_{r_\epsilon}.
\end{equation*}
The corresponding permissible intervals $\mathrm{I}_i^{(a)}$ and $\mathrm{I}_\mu^{(b)}$ are relabelled as $\mathrm{I}_i$, $1\le i \le r_\epsilon$. In this step, we consider a special configuration $\mathbf x\equiv \mathbf x(0):=(x_1, x_2 , \cdots , x_{r_\epsilon})$ of the outliers that is {\it independent of $N$} and satisfies 
\begin{equation}\label{eq_multione}
x_1>x_2 > \cdots > x_{r_\epsilon}>E_+.
\end{equation}
In this step, we claim that each $\mathbf{\mathrm{I}}_i(\mathbf x)$, $1\le  i \le r_\epsilon$, contains precisely one eigenvalue of $\widehat{\mathcal{Q}}_1$. Fix any $1\le i \le r_\epsilon$ and pick up a small $n$-independent positively oriented closed contour $\mathcal{C} \subset \mathbb{C}/[0, E_+]$ that encloses $x_i$ but no other point of the set $\{x_i\}_{i=1}^{r_\epsilon}.$ Define two functions
\begin{equation*}
h(z):=\det(\bm{\mathcal{D}}^{-1}+z \mathbf{U}^* \mathbf{G}(z) \mathbf{U}), \quad  l(z)=\det(\bm{\mathcal{D}}^{-1}+z \mathbf{U}^*\Theta(z) \mathbf{U}).
\end{equation*}  
The functions $h,l$ are holomorphic on and inside $\mathcal{C}$ when $n$ is sufficiently large by \eqref{eq_proofoutlier}. Moreover, by the construction of $\mathcal{C},$ the function $l$ has precisely one zero inside $\mathcal{C}$ at $x_i.$ By \eqref{eq_proofoutlierlocallaw}, we have  
\begin{equation*}
\min_{z \in \mathcal{C}}|l(z)| \gtrsim 1, \quad |h(z)-l(z)|=\mathrm{O}(N^{-1/2+ \epsilon/2}).
\end{equation*}
The claim then follows from Rouch{\' e}'s theorem.

\vspace{3pt}

\noindent{\bf Step 3:} In order to extend the results in Step 2 to arbitrary $N$-dependent configuration $\mathbf x_N$, we shall employ a continuity argument as in \cite[Section 6.5]{KY13}. We first choose an $N$-independent $\mathbf{x}(0)$ that satisfies \eqref{eq_multione}. We then choose a continuous ($N$-dependent) path of the eigenvalues of $D^a$ and $D^b$, which gives a continuous path of the configurations $(\mathbf{x}(t):0\le t \le 1)$ that connects $\mathbf{x}(0)$ and $\mathbf{x}(1)=\mathbf{x}_N$. Correspondingly, we have a continuous path of eigenvalues $\{\widehat \lambda_i(t)\}_{i=1}^N$. We require that $\mathbf{x}(t)$ satisfies the following properties.
\begin{itemize}
\item[(i)] For all $t\in [0,1]$, the eigenvalues of $D^a(t)$ and $D^b(t)$ are all non-negative.
\item[(ii)] For all $t\in [0,1]$, the number $r_\epsilon$ of $\epsilon$-spikes is unchanged and we denote them by $\widehat \sigma_1(t),\cdots, \widehat \sigma_{r_\epsilon}(t)$. Moreover, we always have the following order of the outliers: $x_1(t) \ge x_2(t) \ge \cdots \ge x_{r_\epsilon}(t)$.
\item[(iii)] For all $t\in [0,1]$, we denote the permissible intervals as $\mathrm{I}_i(t)$. If $\mathrm{I}_i(1)\cap \mathrm{I}_j(1)=\emptyset$ for $1\le i < j\le r_\epsilon$, then $\mathrm{I}_i(t)\cap \mathrm{I}_j(t)=\emptyset$ for all $t\in [0,1]$. The interval $\mathrm{I}_0$ in \eqref{I0} is unchanged along the path. 
\end{itemize}
It is easy to see that such a path $\mathbf{x}(t)$ exists. With a bootstrap argument along the path $\mathbf{x}(t)$, we can prove the following lemma and complete Step 3. 
\begin{lem}\label{lem_cont} 
On the event $\Xi$, the estimate \eqref{eq_reduceproofoutlier} holds for the configuration $\mathbf{x}(1)$.
\end{lem}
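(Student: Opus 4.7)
The plan is to deploy a standard bootstrap/continuity argument along the path $\mathbf{x}(t)$, $t \in [0,1]$, constructed in Step 3. The crucial observation is that the event $\Xi$, defined via the local laws for the unspiked matrix $\mathcal{Q}_1$, is independent of the spike parameters $D^a$ and $D^b$; hence all the estimates from Steps 1 and 2 that hold on $\Xi$ hold uniformly in $t$. In particular, Lemma \ref{lem_permissibleregion} applies to each $\widehat{\mathcal{Q}}_1(t)$ with its own permissible region $\mathrm{I}(t)$ and restricts the locations of all eigenvalues at every $t$.

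Introduce the set
\[
T := \bigl\{ t \in [0,1] : \text{each connected component of } {\textstyle\bigcup}_{i=1}^{r_\epsilon}\mathrm{I}_i(t) \text{ contains the correct number of eigenvalues of } \widehat{\mathcal{Q}}_1(t) \bigr\},
\]
where the "correct number" is the number of classical outlier locations $x_j(t)$ that lie in that component. Step 2 yields $0 \in T$, since at $t=0$ the intervals $\mathrm{I}_i(0)$ are pairwise disjoint and each contains exactly one eigenvalue. First I would show that $T$ is both open and closed in $[0,1]$, which forces $T=[0,1]$. Applied at $t=1$ this pins $\widehat{\lambda}_{\pi_a(i)}$ inside the connected component of $\bigcup_j\mathrm{I}_j(1)$ containing $\Omega_B^{-1}(\widehat{a}_i)$; since such a component has diameter at most $r_\epsilon$ times the largest width $N^{-1/2+\epsilon}\Delta_1(\widehat{a}_i)$ of its constituent intervals (after comparing via the monotonicity of $\Omega_B$ on $(E_+,\infty)$ and \eqref{eq_derivativecontrol}), this delivers \eqref{eq_reduceproofoutlier} up to the extra $N^{\epsilon}$ factor already built into the target bound.

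Closedness of $T$ follows from continuity of the $\widehat{\lambda}_i(t)$ in $t$ and the fact that the counting function is upper semi-continuous and bounded above by the permissible count supplied by Lemma \ref{lem_permissibleregion}. For openness, I would fix $t_0\in T$ and observe that eigenvalues of $\widehat{\mathcal{Q}}_1(t)$ cannot cross the gaps between distinct connected components of $\bigcup_i\mathrm{I}_i(t)$: by property (iii) of the path together with \eqref{eq_edgeestimationrough} and \eqref{eq_derivativecontrol}, gaps that are present at $t=1$ remain bounded below, uniformly on a small neighborhood of $t_0$, on a scale much larger than the interval widths $N^{-1/2+\epsilon}\Delta_1(\widehat{a}_i)$, and the continuous motion of $\widehat{\lambda}_i(t)$ then forbids any reshuffling between components. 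The complementary bound \eqref{eq_reducebulkvalue} for indices corresponding to sub-threshold spikes requires no extra work: such indices generate no $\mathrm{I}_i(t)$ at all, so by Lemma \ref{lem_permissibleregion} their associated eigenvalues are automatically confined to $\mathrm{I}_0=[0,E_++N^{-2/3+3\epsilon}]$.

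The main obstacle will be configurations where several $\mathrm{I}_i(t)$ coalesce into a single connected component as $t$ varies, for instance when some of the $d^a_i$'s become degenerate. There the individual eigenvalue-to-spike correspondence breaks down, but the bootstrap argument only requires preservation of the total count inside each connected component, and this is robust under merging as long as the outer boundary of each component evolves continuously, which is exactly what property (iii) of the path guarantees. The quantitative bound \eqref{eq_reduceproofoutlier} is still extracted from the final count at $t=1$, because a merged component still has diameter $\lesssim N^{-1/2+2\epsilon}\Delta_1(\widehat{a}_i)$ for each target index it contains, so every eigenvalue it captures satisfies the required estimate.
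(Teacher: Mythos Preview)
Your proposal is correct and takes essentially the same approach as the paper: the paper simply cites \cite[Lemma D.3]{DYaos}, which carries out precisely this bootstrap/continuity argument along the path $\mathbf{x}(t)$, using that the event $\Xi$ is independent of the spike configuration so that Lemma \ref{lem_permissibleregion} and the Step~2 counting hold uniformly in $t$. One minor imprecision: Lemma \ref{lem_permissibleregion} does not by itself supply an upper bound on the eigenvalue count in a component (that comes from the total count and the fact that eigenvalues cannot cross gaps), so your justification of closedness should instead appeal directly to continuity of the eigenvalues together with property (iii) of the path, which ensures components can only coalesce and never split as $t$ increases toward $1$.
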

\begin{proof}
See \cite[Lemma D.3]{DYaos}. 
\end{proof}
}

\vspace{3pt}

\noindent{\bf Step 4:} In this step, we consider the extremal non-outlier eigenvalues when $i \notin \left( \mathcal{O}_{\epsilon}^{(a)} \cup \mathcal{O}_{\epsilon}^{(b)} \right)$ and prove (\ref{eq_reducebulkvalue}). The discussion will use the following eigenvalue interlacing result. 
\begin{lem}\label{lem_interlacing} Recall the eigenvalues of $\wh{\mathcal{Q}}_1$ and $\mathcal{Q}_1$ are denoted as $\{\wh{\lambda}_i\}$ and $\{\lambda_i\},$ respectively. Then we have that
\begin{equation*}
\wh{\lambda}_i \in [\lambda_i, \lambda_{i-r-s}],
\end{equation*}
where we adopt the convention that $\lambda_i=\infty$ if $i<1$ and $\lambda_i=0$ if $i>N.$
\end{lem}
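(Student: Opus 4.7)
The plan is to exploit the multiplicative structure of the spiked perturbation, reducing the lemma to two elementary facts about singular values. Since $A$ and $B$ are positive definite diagonal matrices, I define
\begin{equation*}
D_A := \widehat{A}^{1/2} A^{-1/2}, \qquad D_B := B^{-1/2} \widehat{B}^{1/2},
\end{equation*}
which are diagonal matrices with $k$-th entries $\sqrt{1+d_k^a}$ (resp.\ $\sqrt{1+d_k^b}$) at the spike indices and $1$ elsewhere. Then
\begin{equation*}
\widehat{Y} = \widehat{A}^{1/2} U \widehat{B}^{1/2} = D_A \cdot A^{1/2} U B^{1/2} \cdot D_B = D_A\, Y\, D_B.
\end{equation*}
Under the setup where the spikes are positioned above the upper edge (Assumption \ref{assum_outlier}), we have $d_k^a, d_k^b \geq 0$, so $D_A \geq I$ and $D_B \geq I$ in the PSD order; moreover $D_A - I$ has rank at most $r$ and $D_B - I$ has rank at most $s$.

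For the lower bound $\widehat{\lambda}_i \geq \lambda_i$, I apply the Courant--Fischer min-max formula. For every vector $x$, the bound $D_A^2 \geq I$ gives $\|D_A Y D_B x\| \geq \|Y D_B x\|$, so that minimizing over unit vectors in a subspace and maximizing over subspaces of dimension $i$ yields $\sigma_i(D_A Y D_B) \geq \sigma_i(Y D_B)$. Using $\sigma_i(M) = \sigma_i(M\adj)$, the analogous argument applied to $(Y D_B)\adj = D_B Y\adj$ gives $\sigma_i(Y D_B) \geq \sigma_i(Y)$. Squaring then yields $\widehat{\lambda}_i = \sigma_i(\widehat{Y})^2 \geq \sigma_i(Y)^2 = \lambda_i$.

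For the upper bound $\widehat{\lambda}_i \leq \lambda_{i-r-s}$, I decompose
\begin{equation*}
\widehat{Y} - Y = (D_A - I)\, Y\, D_B + Y\, (D_B - I),
\end{equation*}
whose rank is at most $\operatorname{rank}(D_A - I) + \operatorname{rank}(D_B - I) \leq r + s$. Weyl's singular-value inequality for rank-$(r+s)$ perturbations gives $\sigma_i(\widehat{Y}) \leq \sigma_{i-r-s}(Y)$, hence $\widehat{\lambda}_i \leq \lambda_{i-r-s}$. The boundary conventions $\lambda_i = \infty$ for $i<1$ and $\lambda_i = 0$ for $i>N$ handle the edge indices automatically.

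The proof is purely linear-algebraic; the Haar structure of $U$ plays no role beyond the invertibility of $A, \widehat{A}, B, \widehat{B}$. There is no real technical obstacle here---the two substantive inputs, namely the positivity $D_A, D_B \geq I$ and the rank bound on $\widehat{Y} - Y$, both follow directly from the definition \eqref{eq_spikes} of the spiked matrices. The only point worth noting is that one \emph{cannot} derive the lower bound by writing $\widehat{\mathcal{Q}}_1 - \mathcal{Q}_1$ as a single PSD perturbation, because the map $M \mapsto N^{1/2} M N^{1/2}$ does not preserve the PSD order in $N$ when $M$ and $N$ do not commute; the multiplicative decomposition $\widehat{Y} = D_A Y D_B$ sidesteps this issue cleanly.
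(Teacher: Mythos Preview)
Your proof is correct. The paper itself does not give an argument and merely cites \cite[Lemma C.3]{DYaos}, so there is no approach to compare against; your self-contained argument via the factorization $\wh{Y}=D_{A}YD_{B}$ together with Courant--Fischer for the lower bound and Weyl's rank inequality for the upper bound is exactly the elementary route one expects. One minor remark: the positivity $d_{k}^{a},d_{k}^{b}>0$ that you use for $D_{A},D_{B}\geq I$ is part of the definition of the spikes in \eqref{eq_spikes} (the paper explicitly takes the $d_{k}$'s to be positive), not a consequence of Assumption~\ref{assum_outlier}, so your justification can be sharpened there.
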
 
\begin{proof}
See \cite[Lemma C.3]{DYaos}.
\end{proof}

We first fix a configuration $\mathbf{x}(0)$ as mentioned earlier. Then by the discussion of Step 2, (\ref{eq_proofoutlier}) and Lemma \ref{lem_interlacing}, we can prove (\ref{eq_reducebulkvalue}) under the configuration $\mathbf{x}(0).$  For arbitrary $N$-dependent configuration, we again use a continuity argument as mentioned in Step 3. We refer the readers to Step 4 of the proof of Theorem 3.6 of \cite{DYaos}. This finishes the proof of Theorem \ref{thm_outlier}.

\subsection{Proof of Theorem \ref{thm_eigenvaluesticking}}\label{sec_proofsticking}

In this subsection, we prove the results for the bulk eigenvalues.
According to Theorems \ref{thm_outlier} and \ref{prop_linearlocallaw} and Lemma \ref{thm_rigidity}, for any fixed $\epsilon>0,$ we can choose a high probability $\Xi_1$ in which (\ref{eq_proofinsidelocallaw})--(\ref{eq_proofoutlier}) and the following estimates hold;
\begin{equation}\label{eq_stickingone}
\mathbf{1}(\Xi_1)|\widehat{\lambda}_i-E_+| \leq N^{-2/3+\epsilon/2}, \ \text{for} \ r_++s_++1 \leq i \leq \varpi,
\end{equation}
for some integer $\varpi \geq r+s;$ and for $i \leq \tau N,$
\begin{equation}\label{eq_stickingtwo}
\mathbf{1}(\Xi_1)|\lambda_i-\gamma_i^*| \leq i^{-1/3} N^{-2/3+\epsilon/2}. 
\end{equation}
In what follows, we focus on our discussion on $\Xi_1$ and hence it will be purely deterministic. We follow the three steps discussed in Section \ref{sec_proofstartegy} to conclude Theorem \ref{thm_eigenvaluesticking}. The proof closely follows that of \cite[Theorem 3.7]{DYaos}, and we only focus on pointing out the differences due to similarity. In the first step, we find the permissible region for the eigenvalues and record the results in Lemma \ref{lem_nonoutlierpermissible} below.  For any $i$ and { and $\gamma$ defined in (\ref{eq_defngamma})}, we define the set
\begin{equation*}
\Omega_i\equiv\Omega_{i}(c_{0}):=\left\{ x \in [\lambda_{i-r-s-1}, E_++c_0 N^{-2/3+2 \epsilon}]: \mathrm{dist}(x, \mathrm{spec}(\mathcal{Q}_1))>N^{-1+\epsilon} \gamma^{-1} \right\},
\end{equation*}
where $\mathrm{spec}(\mathcal{Q}_1)$ stands for the spectrum of $\mathcal{Q}_1$.
\begin{lem}\label{lem_nonoutlierpermissible} For $\gamma \geq N^{-1/3+\epsilon}$ and $i \leq N^{1-2 \epsilon} \gamma^3,$ there exits a constant $c_0>0$ so that the set $\Omega_i$ contains no eigenvalue of $\widehat{\mathcal{Q}}_1.$
\end{lem}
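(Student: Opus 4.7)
The plan is to mimic Step 1 in the proof of Theorem \ref{thm_outlier}, transported to the non-outlier regime. Fix $x\in\Omega_{i}$. The condition $\delta(x):=\dist(x,\mathrm{spec}(\mathcal{Q}_{1}))>N^{-1+\epsilon}\gamma^{-1}>0$ guarantees that $x$ is not an eigenvalue of $\mathcal{Q}_{1}$, so by Lemma \ref{lem_evmaster} it suffices to show that the $(r+s)\times(r+s)$ master matrix
\begin{equation*}
M(x):=\bm{\mathcal{D}}^{-1}+x\mathbf{U}^{*}\mathbf{G}(x)\mathbf{U}
\end{equation*}
is non-singular for every $x\in\Omega_{i}$. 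I would decompose $M(x)=M_{0}(z_{*})+\mathcal{E}(x)$ with complex regularization $z_{*}:=x+\ii\delta(x)$, deterministic part $M_{0}(z):=\bm{\mathcal{D}}^{-1}+x\mathbf{U}^{*}\Theta(z)\mathbf{U}$, and remainder $\mathcal{E}(x)$, and separately establish (i) $\sigma_{\min}(M_{0}(z_{*}))\gtrsim\gamma$ and (ii) $\|\mathcal{E}(x)\|=o(\gamma)$ with high probability, which together force non-singularity.

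For (i), the explicit formula \eqref{eq_mvtcontrol} combined with $|a_{i}-\Omega_{B}(z_{*})|\sim 1$ from Lemma \ref{prop:stabN}(1) reduces the singular-value bound to showing $\min_{i}|\widehat{a}_{i}-\Omega_{B}(z_{*})|\gtrsim\gamma$ and the analogous statement for $\widehat{b}_{j}-\Omega_{A}(z_{*})$. The constraint $i\leq N^{1-2\epsilon}\gamma^{3}$ together with Lemma \ref{thm_rigidity} confines $x$ to the edge neighborhood $[E_{+}-C\gamma^{2},E_{+}+c_{0}N^{-2/3+2\epsilon}]$; on this neighborhood, the square-root expansion \eqref{eq_localquadtratic} and the closeness estimate \eqref{eq_closenessofomega} force $\Omega_{B}(z_{*})=u+\ii v$ with $|u-\Omega_{\beta}(E_{+})|\leq\sqrt{c_{0}}\,\gamma$ and $v\geq 0$. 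A dichotomy on whether $v\geq\gamma/2$ or $v<\gamma/2$, combined with $\widehat{a}_{i}\in\R$ and the definitional lower bound $|\widehat{a}_{i}-\Omega_{\beta}(E_{+})|\geq\gamma$, furnishes the desired bound after shrinking $c_{0}$ if necessary (in the former subcase directly from $|\widehat{a}_{i}-\Omega_{B}(z_{*})|\geq v$; in the latter from $|\operatorname{Re}(\widehat{a}_{i}-\Omega_{B}(z_{*}))|\geq\gamma-\sqrt{c_{0}}\gamma$).

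For (ii), I would apply the local law of Theorem \ref{prop_linearlocallaw} at $z_{*}$, which is admissible since $\delta(x)\geq\eta_{L}$, to obtain control on $\mathbf{U}^{*}(\mathbf{G}(z_{*})-\Theta(z_{*}))\mathbf{U}$, and then pass to the real axis using the resolvent identity $\mathbf{G}(x)-\mathbf{G}(z_{*})=\ii\delta(x)\mathbf{G}(x)\mathbf{G}(z_{*})$ together with the delocalization bound of Lemma \ref{thm_delocalization}. The delocalization turns each $|\mathbf{U}^{*}\mathbf{w}_{k}|^{2}$ into a factor $N^{-1}$, and the resulting sum $N^{-1}\sum_{k}|\mu_{k}-x|^{-1}|\mu_{k}-z_{*}|^{-1}$ is dominated by $\delta(x)^{-1}\im m_{H}(z_{*})$ via the spectral identity $\sum_{k}((\mu_{k}-x)^{2}+\delta(x)^{2})^{-1}=N\delta(x)^{-1}\im m_{H}(z_{*})$ and $|\mu_{k}-x|\geq\delta(x)$. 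Combining both bounds, a short calculation using $|x-E_{+}|\lesssim\gamma^{2}$ and $\gamma\geq N^{-1/3+\epsilon}$ shows $\|\mathcal{E}(x)\|=o(\gamma)$. The main technical obstacle is to secure these estimates \emph{uniformly} over $x\in\Omega_{i}$; this I would handle by covering $\Omega_{i}$ with an $N^{O(1)}$-grid and exploiting Lipschitz continuity of $x\mapsto\mathbf{G}(x)$, with derivative control supplied by the quantitative separation $\delta(x)>N^{-1+\epsilon}\gamma^{-1}$ between grid points.
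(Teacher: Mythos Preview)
Your approach is essentially the paper's: reduce via Lemma \ref{lem_evmaster} to non-singularity of $\bm{\mathcal{D}}^{-1}+x\mathbf{U}^{*}\mathbf{G}(x)\mathbf{U}$, regularize at a complex point near the real axis, and separately show the deterministic diagonal is $\gtrsim\gamma$ while the fluctuation is $\ll\gamma$. The paper uses the fixed regularization $z_{x}=x+\ii N^{-1+\epsilon}\gamma^{-1}$ rather than your variable $z_{*}=x+\ii\delta(x)$, which simplifies the bookkeeping slightly (no need to control $\delta(x)$ from above), but the substance is identical.

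Two small technical remarks. First, your dichotomy on $v=\im\Omega_{B}(z_{*})$ in part (i) is unnecessary: the constraint $i\leq N^{1-2\epsilon}\gamma^{3}$ actually forces $|x-E_{+}|=o(\gamma^{2})$ (not just $O(\gamma^{2})$), and together with $\delta(x)\lesssim\gamma^{2}$ and the square-root expansion this gives $|\Omega_{B}(z_{*})-\Omega_{\beta}(E_{+})|=o(\gamma)$ outright, so $|\widehat{a}_{i}-\Omega_{B}(z_{*})|\geq\gamma-o(\gamma)$ without case analysis. Second, in part (ii) you invoke Lemma \ref{thm_delocalization} to turn $|\mathbf{U}^{*}\mathbf{w}_{k}|^{2}$ into $N^{-1}$, but that lemma only covers $k\leq cN$; you should either handle the tail $k>cN$ trivially (those $\lambda_{k}$ are a fixed distance from $x$, so the contribution is $O(\delta(x))=o(\gamma)$ by $\sum_{k}|\mathbf{u}_{k}(i)|^{2}=1$), or---cleaner and closer to what the paper implicitly does---replace the delocalization step by the Ward identity $\sum_{k}|\mathbf{u}_{k}(i)|^{2}/|\lambda_{k}-z_{*}|^{2}=\eta^{-1}\im\widetilde{G}_{ii}(z_{*})$ combined with Cauchy--Schwarz, which yields $\|\mathbf{U}^{*}(\mathbf{G}(x)-\mathbf{G}(z_{*}))\mathbf{U}\|\lesssim\im\Omega_{B}(z_{*})$ without any restriction on $k$.
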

\begin{proof}
The proof is similar to that of Lemma D.4 of \cite{DYaos} and we only sketch the key points here. Denote 
\begin{equation*}
\eta_x:=N^{-1+\epsilon} \gamma^{-1}, \ z_x=x+\ii \eta_x. 
\end{equation*}
 Using a discussion similar to (\ref{eq_sufficientconverse}), we find that $x$ is not an eigenvalues of $\widehat{\mathcal{Q}}_1$ if 
 \begin{equation}\label{eq_newcondition}
 \min\left\{ \min_{1 \leq i \leq r}\left| \frac{d_i^a+1}{d_i^a}+\frac{\Omega_B(x)}{a_i-\Omega_B(x)} \right|, \min_{1 \leq \mu-N \leq s } \left| \frac{d_\mu^b+1}{d_\mu^b}+\frac{\Omega_A(x)}{b_\mu-\Omega_A(x)} \right| \right\} \gg N^{\epsilon/2} \im \Omega_B(z_x)+\frac{N^{\epsilon/2}}{N \eta_x}.
 \end{equation}
On one hand, using the definition of $\gamma,$ it is easy to see that for $x \in \Omega_i$  and some constant $C>0$
\begin{equation*}
 \min\left\{ \min_{1 \leq i \leq r}\left| \frac{d_i^a+1}{d_i^a}+\frac{\Omega_B(x)}{a_i-\Omega_B(x)} \right|, \min_{1 \leq \mu-N \leq s } \left| \frac{d_\mu^b+1}{d_\mu^b}+\frac{\Omega_A(x)}{b_\mu-\Omega_A(x)} \right| \right\} \geq C \gamma.
\end{equation*}
On the other hand, using \eqref{eq_localquadtratic} and Lemma \ref{prop:stabN}, we readily obtain that for $x \in \Omega_i$
\begin{equation*}
N^{\epsilon/2} \im \Omega_B(z_x)+\frac{N^{\epsilon/2}}{N \eta_x} \ll \gamma. 
\end{equation*}
This completes the proof using (\ref{eq_newcondition}). 
\end{proof}
In the second step, we perform the counting argument for a special case as in Lemma \ref{lem_nonoutcounting} below. 
\begin{lem}\label{lem_nonoutcounting} We fix a configuration $\mathbf{x} \equiv \mathbf{x}(0):=(x_1, x_2, \cdots, x_{r+s})$ of the outliers that is independent of $N$ and satisfies 
\begin{equation*}
x_1>x_2>\cdots>x_{r+s}>E_+.
\end{equation*} 
For $\gamma \geq N^{-1/3+2 \epsilon}$ and $i \leq N^{1-4 \epsilon} \gamma^3,$ we have that for some constant $C>0$ 
\begin{equation*}
|\widehat{\lambda}_{i+r+s}-\lambda_i| \leq C N^{-1+2 \epsilon} \gamma^{-1}. 
\end{equation*}
\end{lem}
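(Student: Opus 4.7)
The plan is to work on the high-probability event $\Xi_{1}$ already introduced in the proof of Theorem \ref{thm_eigenvaluesticking}, on which \eqref{eq_proofoutlier}, \eqref{eq_stickingone}, and \eqref{eq_stickingtwo} hold simultaneously; from that point on the argument becomes entirely deterministic. Since $\mathbf{x}(0)$ is $N$-independent with $x_{r+s}>E_{+}$, Step 2 of the proof of Theorem \ref{thm_outlier} locates each outlier $\widehat\lambda_{j}$, $1\le j\le r+s$, within $N^{-1/2+\epsilon}$ of $x_{j}$, and in particular strictly above $E_{+}+c_{0}N^{-2/3+2\epsilon}$. Fix $i\le N^{1-4\epsilon}\gamma^{3}$. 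Choosing the index $I=i+2(r+s)+1$ in Lemma \ref{lem_nonoutlierpermissible} (whose hypothesis $I\le N^{1-2\epsilon}\gamma^{3}$ is satisfied for large $N$), every non-outlier eigenvalue of $\widehat{\mathcal Q}_{1}$ lying in $[\lambda_{i+r+s},E_{+}+c_{0}N^{-2/3+2\epsilon}]$ must belong to some strip
$\mathcal S_{k}:=[\lambda_{k}-N^{-1+\epsilon}\gamma^{-1},\,\lambda_{k}+N^{-1+\epsilon}\gamma^{-1}]$. The interlacing Lemma \ref{lem_interlacing} places $\widehat\lambda_{i+r+s}\in[\lambda_{i+r+s},\lambda_{i}]$, so $\widehat\lambda_{i+r+s}$ sits in $\mathcal S_{K}$ for some integer $K\ge 1$, and the proof reduces to showing $K\in[i,\,i+r+s]$.

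The next step is a counting argument. Rigidity \eqref{eq_stickingtwo} yields $|\lambda_{k}-\lambda_{k+1}|\gtrsim k^{-1/3}N^{-2/3}$, and the hypothesis $i\le N^{1-4\epsilon}\gamma^{3}$ makes this spacing dominate the strip width $2N^{-1+\epsilon}\gamma^{-1}$ for every $k\le i+r+s$; hence the strips $\{\mathcal S_{k}\}_{k\le i+r+s}$ are pairwise disjoint. Let $n_{k}$ be the number of eigenvalues of $\widehat{\mathcal Q}_{1}$ in $\mathcal S_{k}$, and set $\widehat N(x):=\#\{j:\widehat\lambda_{j}>x\}$, $M(x):=\#\{j:\lambda_{j}>x\}$. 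Lemma \ref{lem_interlacing} gives $M(x)\le\widehat N(x)\le M(x)+r+s$. Evaluating this at $x=\lambda_{K}\pm N^{-1+\epsilon}\gamma^{-1}$, together with the fact that all $r+s$ outliers lie above every strip while every other large $\widehat\lambda_{j}$ is pinned to some $\mathcal S_{k}$ by Lemma \ref{lem_nonoutlierpermissible}, produces the cumulative bound
\begin{equation*}
K-r-s\;\le\;\sum_{k=1}^{K}n_{k}\;\le\;K.
\end{equation*}
On the other hand, exactly $i+r+s-1$ eigenvalues $\widehat\lambda_{j}$ are strictly greater than $\widehat\lambda_{i+r+s}$; $r+s$ of them are outliers and the remaining $i-1$ non-outliers fill $\mathcal S_{1},\dots,\mathcal S_{K-1}$ and part of $\mathcal S_{K}$, so $\sum_{k\le K-1}n_{k}\le i-1\le \sum_{k\le K}n_{k}$. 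Combining this with the cumulative bound above pins $K$ into $[i,\,i+r+s]$ exactly.

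Finally, rigidity \eqref{eq_stickingtwo} yields $|\lambda_{i}-\lambda_{K}|\le|\lambda_{i}-\lambda_{i+r+s}|\lesssim i^{-1/3}N^{-2/3+\epsilon/2}$, and the hypothesis $i\le N^{1-4\epsilon}\gamma^{3}$ turns this into $|\lambda_{i}-\lambda_{K}|\lesssim N^{-1+2\epsilon}\gamma^{-1}$. Combined with $|\widehat\lambda_{i+r+s}-\lambda_{K}|\le N^{-1+\epsilon}\gamma^{-1}$ from strip membership, the triangle inequality yields the desired estimate. The hard part of the argument is the counting step in the second paragraph: it requires both the permissibility with the correct index $I$ and the disjointness of the strips (which is precisely what sets the threshold $i\le N^{1-4\epsilon}\gamma^{3}$), followed by a careful bookkeeping of the $r+s$ slack in the interlacing inequalities in order to localize $K$ in a window of width only $r+s$ rather than a much larger one. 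Once Lemma \ref{lem_nonoutcounting} is proved, the bootstrap/continuity step will promote it from the fixed base configuration $\mathbf{x}(0)$ to the actual configuration $\mathbf{x}(1)=\mathbf x_{N}$, in the same spirit as Lemma \ref{lem_cont}.
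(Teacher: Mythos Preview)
Your argument has a genuine gap in the second paragraph. You claim that rigidity \eqref{eq_stickingtwo} yields the lower bound $|\lambda_{k}-\lambda_{k+1}|\gtrsim k^{-1/3}N^{-2/3}$, but this is false: rigidity only gives the \emph{upper} bound $|\lambda_{k}-\gamma_{k}^{*}|\le k^{-1/3}N^{-2/3+\epsilon/2}$, and since the quantile spacing $\gamma_{k}^{*}-\gamma_{k+1}^{*}\sim k^{-1/3}N^{-2/3}$ is \emph{smaller} than the rigidity error by a factor $N^{\epsilon/2}$, no lower bound on consecutive eigenvalue gaps follows. Consecutive $\lambda_{k}$'s can and do cluster on scales much finer than $N^{-1+\epsilon}\gamma^{-1}$, so your strips $\caS_{k}$ are not in general disjoint, and the entire counting bookkeeping built on that disjointness (the definition of $n_{k}$, the cumulative bound, and the localization of $K$) collapses.

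The paper's proof is designed precisely to avoid this issue. Rather than working with individual $\lambda_{k}$'s, it takes the finest partition $\{A_{k}\}$ of $\{1,\dots,N\}$ in which indices whose eigenvalues are within $\delta=N^{-1+7\epsilon/6}\gamma^{-1}$ of each other are merged into the same block; rigidity then bounds each block size by $|A_{k}|\le CN^{3\epsilon/4}$, and the intervals $[x_{0}^{k},x_{1}^{k}]$ around each block are separated by construction. The crucial step that replaces your disjoint-strip counting is a \emph{perturbation argument} (see \cite[Proposition D.6]{DYaos}) showing that $\widehat{\caQ}_{1}$ has at least $|A_{k}|$ eigenvalues in $[x_{0}^{k},x_{1}^{k}]$; combined with the interlacing upper bound this pins down the eigenvalue count exactly. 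Your interlacing-only approach cannot substitute for this perturbation input, because interlacing alone leaves $r+s$ units of slack which is not enough to resolve blocks of size up to $N^{3\epsilon/4}$.
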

\begin{proof}
Since the proof is similar to the counterpart Lemma D.5 of \cite{DYaos}, we only sketch it. The key idea is to group together the eigenvalues that are close to each other. Let $\mathcal{A}=\{A_k\}$ be the finest partition of $\{1,2,\cdots, N\}$ such that $i<j$ belong to the same block of $\mathcal{A}$ if 
\begin{equation*}
|\lambda_i-\lambda_j| \leq \delta:=N^{-1+7 \epsilon/6} \gamma^{-1}. 
\end{equation*}
Note that each block $A_k$ of $\mathcal{A}$ consists of a sequence of consecutive integers. Denote $k^*$ such that $N^{1-4 \epsilon} \gamma^3 \in A_{k^*}.$ Following a discussion similar to (D.47) and (D.48) of \cite{DYaos}, we can conclude that 
\begin{equation}\label{eq_sizeeigen}
|A_k| \leq CN^{3\epsilon/4}  \quad \text{for } \ k=1,\cdots, k^*,
\end{equation}
and for any given $i_k \in A_k$,
\begin{equation}\label{rigidity_size}
|\lambda_{i}-\gamma^*_{i_k}|\leq i^{-1/3} N^{-2/3+ \epsilon}  \quad \text{for all }\ i \in A_k .
\end{equation}
For any $1 \leq k \leq k^*,$ we denote 
\begin{equation*}
a^k:=\min_{i \in A_k} \lambda_{i} = \lambda_{m_k}, \quad b^k:=\max_{i \in A_k} \lambda_{i}=\lambda_{l_k}. 
\end{equation*}
With the above notations, we introduce the continuous path as
\begin{equation*}
x_t^k=(1-t)\left(a^k - \delta/3\right)+t\left(b^k  + \delta/3\right)t, \quad t \in [0,1].
\end{equation*}
Note that $x_0^k= a^k - \delta/3$ and $x_1^k= b^k  + \delta/3$. The interval $[x_0^k, x_1^k]$ contains precisely the eigenvalues of $\mathcal Q_1$ that are in $A_k$, and the endpoint $x_0^k$ (or $x_1^k$) is at a distance at least of the orders $\delta/3$  from any eigenvalue of $\mathcal Q_1$. 

Moreover, on one hand, using a standard perturbation approach as in Proposition D.6 of \cite{DYaos}, we can show that $\widehat{\mathcal Q}_1$ has at least $|A_k|$ eigenvalues in $[x_0^k, x_1^k]$ for $1\le k \le k^*$. On the other hand, by (\ref{eq_sizeeigen}), (\ref{rigidity_size}) and Lemma \ref{lem_interlacing}, we conclude that $\widehat{\mathcal Q}_1$ has at most $|A_k|$ eigenvalues in $[x_0^k, x_1^k].$ This concludes the proof.  
\end{proof}
In the third step, we generalize Lemma \ref{lem_nonoutcounting} using a continuity argument as in the proof of Theorem \ref{thm_outlier}. We omit the details and conclude the proof of Theorem \ref{thm_eigenvaluesticking}.

\section{Proofs of Theorems \ref{thm_outlieereigenvector} and \ref{thm_nonoutliereigenvector}: the eigenvector statistics}\label{sec_proofeigenvectors}

In this section, we prove the main results of Section \ref{sec_eigenvectorresults} following the proof strategy outlined in Section \ref{sec_proofstartegy}.  We again focus on explaining the main differences from the counterparts in \cite{Bloemendal2016, DYaos}  and how to adapt their proof strategies. As mentioned before, the proof of Theorem \ref{thm_outlieereigenvector} contains two parts. In Section \ref{sec_prove36withconstrain} we prove the results under the assumption of (\ref{condition_nonoverone}) and then remove this assumption to complete the proof in Section \ref{sec_prove36withoutconstrain}. 

\subsection{Proof of Theorem \ref{thm_outlieereigenvector} under (\ref{condition_nonoverone})}\label{sec_prove36withconstrain}

In this subsection, we prove Theorem \ref{thm_outlieereigenvector} assuming (\ref{condition_nonoverone}). Due to similarity, we only focus on the left singular vectors.  The main technical task of this section is to prove Proposition \ref{prop_maineigenvector}, which implies the results. {Recall the definitions in (\ref{eq_shorthandnotationsdelta}) and (\ref{eq_differencedefinition}). }
\begin{prop}\label{prop_maineigenvector} Suppose the assumptions of Theorem \ref{thm_outlieereigenvector} and (\ref{condition_nonoverone}) hold. Then for all $i,j=1,2,\cdots, N,$ we have that
\begin{align*}
& \left|\langle \mathbf{e}_i, \mathcal{P}_S \mathbf{e}_j \rangle-\delta_{ij} {\mathbf{1}}(\pi_a(i) \in S) \wh{a}_i \frac{(\Omega_B^{-1})'(\wh{a}_i)}{\Omega_B^{-1}(\wh{a}_i)} \right|\prec \frac{{\mathbf{1}}(\pi_a(i) \in S, \pi_a(j) \in S)}{\sqrt{N} \sqrt{\Delta_1(\wh{a}_i) \Delta_1(\wh{a}_j)}}+\frac{{\mathbf{1}}(\pi_a(i) \in S, \pi_b(j) \notin S)\Delta_1(\wh{a}_i)}{\sqrt{N} \delta^a_{\pi_a(i), \pi_b(j)}} \\
&+ \frac{1}{N} \left( \frac{1}{\delta_{\pi_a(i)}(S)}+\frac{{\mathbf{1}}(\pi_a(i) \in S)}{\Delta_1(\wh{a}_i)^2} \right) \left( \frac{1}{\delta_{\pi_a(j)}(S)}+\frac{{\mathbf{1}}(\pi_a(j) \in S)}{\Delta_1(\wh{a}_j)^2} \right)+(i \leftrightarrow j),
\end{align*}
where $(i \leftrightarrow j)$ denotes the same terms but with $i$ and $j$ interchanged. 
\end{prop}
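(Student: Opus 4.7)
The plan is to start from the contour integral representation
\[
\langle \mathbf{e}_i, \mathcal{P}_S \mathbf{e}_j \rangle = -\frac{1}{2\pi\ii}\oint_\Gamma \wh{\mathbf{G}}_{ij}(z)\,\dd z,
\]
obtained from \eqref{eq_linearizationspectral} and Cauchy's theorem, where $\Gamma$ is a simple closed contour enclosing precisely the outliers indexed by $S$ and nothing else. Under \eqref{condition_nonoverone}, Theorems \ref{thm_outlier} and \ref{thm_eigenvaluesticking} separate those outliers from $E_+$ and from all other eigenvalues of $\wh{\mathcal{Q}}_1$ by scales much larger than $N^{-2/3}$, so on a high-probability event such a $\Gamma$ exists; I would take it as a union of small loops around each outlier $\Omega_B^{-1}(\wh{a}_i)$ or $\Omega_A^{-1}(\wh{b}_\mu)$ of radius comparable to $\Delta_1(\wh{a}_i)^2$ or $\Delta_2(\wh{b}_\mu)^2$, arranged so that each loop avoids both the edge and the competing outliers, the latter being controlled by the pole-separation distances $\delta_{\pi_a(i)}(S)$ from \eqref{eq_alphasinside}--\eqref{eq_alphaoutside}.

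Next I would transfer the integrand to a form governed by Theorem \ref{prop_linearlocallaw}. From $\wh{\mathbf{H}}-z = \mathbf{P}(\mathbf{H} - z\mathbf{P}^{-2})\mathbf{P}$ and $I-\mathbf{P}^{-2} = \mathbf{U}\bm{\mathcal{D}}\mathbf{U}^*$, the Woodbury identity \eqref{eq_woodbury} yields
\[
\wh{\mathbf{G}}(z) = \mathbf{P}^{-1}\Bigl[\mathbf{G}(z) - z\,\mathbf{G}(z)\mathbf{U}\bigl(\bm{\mathcal{D}}^{-1} + z\mathbf{U}^*\mathbf{G}(z)\mathbf{U}\bigr)^{-1}\mathbf{U}^*\mathbf{G}(z)\Bigr]\mathbf{P}^{-1},
\]
which extends \eqref{eq_keyexpansionev} to \emph{all} entries and represents each $\wh{\mathbf{G}}_{ij}(z)$ as a rational function of the small $(r{+}s)\times(r{+}s)$ block $\mathbf{U}^*\mathbf{G}(z)\mathbf{U}$. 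Plugging in $\mathbf{U}^*\mathbf{G}(z)\mathbf{U} = \mathbf{U}^*\Theta(z)\mathbf{U} + \mathcal{E}(z)$ and using \eqref{eq_Bigthetadefinition}, I would check that $\bm{\mathcal{D}}^{-1} + z\mathbf{U}^*\Theta(z)\mathbf{U}$ is diagonal on each spike block with $(i,i)$-entry $(\wh{a}_i - \Omega_B(z))/[d_i^a(a_i - \Omega_B(z))]$, so its inverse has simple poles exactly at $\Omega_B^{-1}(\wh{a}_i)$ and $\Omega_A^{-1}(\wh{b}_\mu)$. A short calculation collapses the deterministic approximation of $\wh{\mathbf{G}}_{ii}(z)$ for a spike index $i$ to $\Omega_B(z)/[z(\wh{a}_i - \Omega_B(z))]$, whose residue at $\Omega_B^{-1}(\wh{a}_i)$ is precisely $\wh{a}_i(\Omega_B^{-1})'(\wh{a}_i)/\Omega_B^{-1}(\wh{a}_i)$; off-diagonal deterministic entries leave no residue, producing the Kronecker $\delta_{ij}$ in the main term.

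For the fluctuation I would expand $(\bm{\mathcal{D}}^{-1} + z\mathbf{U}^*\mathbf{G}\mathbf{U})^{-1}$ as a geometric series in $\mathcal{E}(z)$ about the deterministic inverse. The first-order term yields the linear-in-error contributions, including the $A$--$B$ cross term $\Delta_1(\wh{a}_i)/[\sqrt{N}\,\delta^a_{\pi_a(i),\pi_b(j)}]$ that comes from the off-diagonal block of $\mathcal{E}(z)$ coupling $A$- and $B$-spikes; the second-order term yields the quadratic-in-$1/N$ piece, whose weights $\Delta_1(\wh{a}_i)^{-2}$ and $\delta_{\pi_a(i)}(S)^{-1}$ are produced by differentiating twice against the denominators. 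The quantitative inputs are (i) the entrywise local laws \eqref{locallaw_edgeentrywise} and \eqref{locallaw_outlierentrywise} to control $\mathcal{E}(z)$, (ii) the lower bound $|\wh{a}_i - \Omega_B(z)|\gtrsim \Delta_1(\wh{a}_i)^2$ on the portion of $\Gamma$ around $\Omega_B^{-1}(\wh{a}_i)$, which follows from \eqref{eq_edgeestimationrough} and \eqref{eq_derivativecontrol}, and (iii) the pole separations in \eqref{eq_alphasinside}--\eqref{eq_alphaoutside} to keep the geometric series convergent uniformly on $\Gamma$. I would split $\Gamma$ into a narrow arc around each pole of $S$ and a far piece, using the outside-spectrum local law on the former and the edge local law on the latter.

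I expect the main obstacle to be the narrow edge margin forced by \eqref{condition_nonoverone}: since $\Omega_B^{-1}(\wh{a}_i)-E_+\sim \Delta_1(\wh{a}_i)^4$ can be as small as $N^{-2/3+4\tau_1}$, the contour $\Gamma$ is pushed into a region where the outside-spectrum local-law error $N^{-1/2}(\kappa+\eta)^{-1/4}$ is only marginally summable against $|\wh{a}_i-\Omega_B(z)|^{-1}\sim \Delta_1(\wh{a}_i)^{-2}$ coming from the pole. Balancing these two factors—while simultaneously keeping the $A$--$B$ cross-block contribution of $\mathcal{E}(z)$ under control and carefully bookkeeping the three qualitative cases ($\pi_a(i),\pi_a(j)$ both in $S$, exactly one in $S$, or only the generic remainder)—is what converts naive bounds into the sharp weights $\Delta_1(\wh{a}_i)^{-2}\Delta_1(\wh{a}_j)^{-2}/N$ appearing in the quadratic error of the proposition.
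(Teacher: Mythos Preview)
Your overall architecture---Cauchy integral over a contour enclosing the $S$-outliers, Woodbury to express $\wh{\Gb}$ through the small block $\Ub^{*}\Gb\Ub$, then a resolvent/perturbative expansion in $\caE(z)$---is exactly what the paper does. Two points, however, need repair.

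\textbf{Contour scale.} Your loops in the $z$-plane have the wrong radius. You yourself note that $\Omega_B^{-1}(\wh{a}_i)-E_{+}\sim\Delta_1(\wh{a}_i)^{4}$; a disc of radius $\sim\Delta_1(\wh{a}_i)^{2}$ centred at $\Omega_B^{-1}(\wh{a}_i)$ therefore swallows the edge $E_{+}$ and the rigidity window of the non-outlier spectrum, so Lemma~\ref{lem_contour} fails and the integral no longer picks up only the $S$-outliers. The paper avoids this by drawing the contour $\Gamma$ in the $\zeta=\Omega_B(z)$ plane, i.e.\ as $\partial\mathsf C$ with $\mathsf C$ a union of discs of radii $\rho_i^{a}=c_i\bigl[\delta_{\pi_a(i)}(S)\wedge\Delta_1^{2}(\wh{a}_i)\bigr]$ around $\wh{a}_i$ (and analogously around $\Omega_B(\Omega_A^{-1}(\wh b_\mu))$), and integrating over $\Omega_B^{-1}(\Gamma)$. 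Since $(\Omega_B^{-1})'(\zeta)\sim|\zeta-\Omega_\beta(E_+)|$ by \eqref{eq_derivativeinversebound}, the pulled-back contour in the $z$-plane has radius $\lesssim\Delta_1^{4}$, which is the correct scale. Working in the $\zeta$-plane also makes the residue calculus transparent: after the change of variables the zeroth-order term is a rational function of $\zeta$ with simple pole at $\wh a_i$, and the first-order term $s_1$ becomes the residue of $\xi_{ij}(\zeta)/[(\zeta-\wh a_i)(\zeta-\wh a_j)]$ with $\xi_{ij}$ as in \eqref{eq_s1decomposition}, whose size is read off directly from \eqref{eq_xiijwholebound}--\eqref{eq_xiijderivativewholebound}. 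There is also no need to mix the edge and outside-spectrum local laws---the entire contour $\Omega_B^{-1}(\Gamma)$ sits in $\caT_{out}(\omega)$ of \eqref{eq_evoutsideparameter}, so only \eqref{locallaw_outlierentrywise} is used.

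\textbf{Missing non-overlapping step.} Your plan presumes the loops can always be arranged to exclude all outliers with indices outside $S$. This is false in general under the hypotheses of the proposition: an outlier $\wh a_k$ with $\pi_a(k)\notin S$ may sit at distance $\delta_{\pi_a(i)}(S)\ll N^{-1/2+\epsilon}\Delta_1^{-1}(\wh a_i)$ from $\wh a_i$, and then on any separating contour the factor $(\bm\caD^{-1}+z\Ub^{*}\Gb\Ub)^{-1}$ is not controlled by $(\bm\caD^{-1}+z\Ub^{*}\Theta\Ub)^{-1}$, so neither your geometric series nor the paper's two-term expansion \eqref{eq_resolventexpansion} is usable. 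The paper handles this by a genuine two-step argument: first impose the non-overlapping condition (Assumption~\ref{assum_eigenvector}), under which $\|\caE(\Omega_B^{-1}(\zeta))\|\ll\frd(\zeta)$ on $\Gamma$ (cf.\ \eqref{eq_smallbound}--\eqref{eq_boundsim1}) and the contour estimates of Lemma~\ref{lem_contourestimation} give the stated bound on $s_2$; then remove Assumption~\ref{assum_eigenvector} by a separate case-splitting argument in the spirit of \cite[Section~5.2]{Bloemendal2016}. You should build this two-step structure into your plan rather than hope the contour always exists.
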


\begin{proof}[\bf Proof of Theorem \ref{thm_outlieereigenvector} under (\ref{condition_nonoverone})] For the left singular vectors,  using that $\caO^{+}$ is finite and
\begin{equation}\label{eq_eigenvectorexpansion}
\mathbf{v}=\sum_{k=1}^N \langle \mathbf{e}_k, \mathbf{v} \rangle \mathbf{e}_k=\sum_{k=1}^N v_k \mathbf{e}_k,
\end{equation}
the results simply follow from Proposition \ref{prop_maineigenvector}. The proof for the right singular vectors is analogous.
\end{proof}

The rest of the subsection is devoted to the proof of Proposition \ref{prop_maineigenvector}. Its proof consists of two steps. In the first step, we prove the results under the following \emph{non-overlapping} condition, which guarantees a phenomenon of cone concentration. 
 \begin{assu}\label{assum_eigenvector} For some fixed small constant $\tau_2>0,$ we assume that for all $\pi_a(i) \in S$ and $\pi_b(\mu) \in S$
 \begin{equation}\label{conidition_nonovertwo}
 \delta_{{\pi_a(i)}}(S) \geq N^{-1/2+\tau_2} {\Delta_1^{-1}(\widehat{a}_i)},  \ \delta_{\pi_b(\mu)}(S) \geq N^{-1/2+\tau_2} {\Delta_2^{-1}(\widehat{b}_\mu)}. 
 \end{equation}
 \end{assu}

 Let $\omega<\tau_1/2$ and $0<\epsilon<\min\{\tau_1,\tau_2\}/10$ be some small positive constants to be chosen later. By Theorems \ref{prop_linearlocallaw} and \ref{thm_outlier} and Lemma \ref{thm_rigidity}, we can choose a high probability event $\Xi_2 \equiv \Xi_2(\epsilon,\omega, \tau_1,\tau_2)$ where the following statements hold. 
\begin{enumerate}
\item[(i)] For all {
\begin{equation}\label{eq_evoutsideparameter}
z \in \mathcal{T}_{out}(\omega):=\left\{E+\ii \eta \in \mathbb{C}: E_++N^{-2/3+\omega} \leq E \leq \omega^{-1} \right\},
\end{equation} }
we have that 
\begin{equation}\label{eq_xi1setestimation}
{\mathbf{1}}(\Xi_2)\| \Ub^* (\Gb(z)-\Theta(z)) \Ub \| \leq N^{-1/2+\epsilon} (\kappa+\eta)^{-1/4},
\end{equation}
{where we recall the definition of $\Theta(z)$ in (\ref{eq_Bigthetadefinition}). }
\item[(ii)] Recall the notations in  (\ref{eq_shorthandnotationsdelta}).  For all $1 \leq i \leq r^+$ and $1 \leq \mu-N \leq s^+$, we have 
\begin{equation}\label{eq_outlierlocationrate}
{\mathbf{1}}(\Xi_2) \left| \widehat{\lambda}_{\pi_a(i)}-\Omega_B^{-1}(\wh{a}_i) \right| \leq N^{-1/2+\epsilon} {\Delta_1(\widehat{a}_i)},\ {\mathbf{1}}(\Xi_2) \left| \wh{\lambda}_{\pi_b(\mu)}-\Omega_A^{-1}(\wh{b}_\mu) \right| \leq N^{-1/2+\epsilon}{\Delta_2(\widehat{b}_\mu)}.
\end{equation}
\item[(iii)] For any fixed integer $\varpi>r+s$ and all $r^++s^+<i \leq \varpi,$ we have that 
\begin{equation}\label{eq_edgerigidity}
{\mathbf{1}}(\Xi_2) \left[ |\lambda_1-E_+|+|\wh{\lambda}_i-E_+| \right] \leq N^{-2/3+\epsilon}. 
\end{equation}  
\end{enumerate}
From now on, we will focus our discussion on the high probability event $\Xi_2$ and hence all the discussion will be purely deterministic. 

{ Here we briefly pause to discuss consequences of the assumption \eqref{condition_nonoverone}. First of all, note that for any fixed $\epsilon>0$
\beq\label{eq_condition_conseq}
\Omega_{B}(\wh{E}_{+})\leq \Omega_{B}(E_{+}+N^{-2/3+\epsilon})\leq \Omega_{\beta}(E_{+}+N^{-2/3+\epsilon})+ N^{-2/3+\epsilon/2}\leq \Omega_{\beta}(E_{+})+2N^{-1/3+\epsilon/2},
\eeq
where we used Lemma \ref{lem_prop_fc} (v) and (vi) in the first inequality, Lemma \ref{prop:stabN} (4) in the second inequality, and Lemma \ref{lem_prop_fc} (iii) in the last inequality. Then, recalling \eqref{condition_nonoverone} and $S\subset\caO_{+}$, for any $i$ with $\pi_{a}(i)\in S$ we have 
\beqs
\wh{a}_{i}\geq \Omega_{\beta}(E_{+})+N^{-1/3+\tau_{1}}\geq \Omega_{B}(\wh{E}_{+})+N^{-1/3+\tau_{1}/2},
\eeqs
where we took $\epsilon<\tau_{1}$ in \eqref{eq_condition_conseq}. Hence $\wh{a}_{i}$ is contained in the analytic domain of $\Omega_{B}^{-1}$ and in particular $\Omega_{B}^{-1}(\wh{a}_{i})$ is well-defined. By the exact same reasoning we have $\wh{b}_{\mu}>\Omega_{A}(\wh{E}_{+})$ and $\Omega_A^{-1}(\wh{b}_{\mu})$ is well-defined. }

We next define {a contour} \normalcolor that will be used in the rest of the proof. Recall (\ref{eq_alphasinside}) and (\ref{eq_alphaoutside}). Denote 
\begin{equation*}
\rho_i^a=c_i\left[ \delta_{\pi_a(i)}(S) \wedge {\Delta_1^2(\widehat{a}_i)} \right], \ \pi_a(i) \in S,
\end{equation*}
and 
\begin{equation*}
\rho_\mu^b=c_\mu \left[ \delta_{\pi_b(\mu)}(S) \wedge {\Delta_2^2(\widehat{b}_\mu)} \right], \ \pi_b(\mu) \in S,
\end{equation*}
for some sufficiently small constants $0<c_i, c_\mu<1.$ Define the contour $\Gamma:=\partial \mathsf{C}$ as the boundary of the union of the open discs 
\begin{equation}\label{eq_defndiscs}
\mathsf{C}:=\bigcup_{\pi_a(i) \in S} B_{\rho_i^a}(\wh{a}_i) \ \cup \bigcup_{\pi_b(\mu) \in S} B_{\rho_\mu^b}(\Omega_B(\Omega_A^{-1}(\wh{b}_\mu))),
\end{equation}
where $B_\mathsf{r}(x)$ denotes an open disc of radius $\mathsf{r}$ around $x.$ It is easy to see that the contour $\mathsf{C}$ is in the analytic domain of $\Omega_B^{-1}.$ In the following lemma, we will show that by choosing sufficiently small $c_i, c_\mu,$ we have that (1) $\overline{\Omega_B^{-1}(\mathsf{C})}$ is a subset of (\ref{eq_evoutsideparameter}) and hence (\ref{eq_xi1setestimation}) holds; (2) $\partial \Omega_B^{-1}(\mathsf{C})=\Omega_B^{-1}(\Gamma)$ only encloses the outliers with indices in $S.$ Its proof is similar to \cite[Lemmas 5.4 and 5.5]{Bloemendal2016} or \cite[Lemma E.6]{DYaos} utilizing the results in Section \ref{sec_subordinationpre}. We omit the details here.

\begin{lem}\label{lem_contour} Suppose that the assumptions of Theorem \ref{thm_outlieereigenvector} and (\ref{condition_nonoverone}) hold true. Then the set $\overline{\Omega_B^{-1}(\mathsf{C})}$ lies in the parameter set (\ref{eq_evoutsideparameter}) as long as $c_i$'s and $c_\mu$'s are sufficiently small.  Moreover, we have that $\{\wh{\lambda}_{\mathfrak{a}}\}_{\mathfrak{a} \in S} \subset \Omega_B^{-1}(\mathsf{C})$ and all the other eigenvalues lie in the complement of $\overline{\Omega_B^{-1}(\mathsf{C})}.$ 
\end{lem}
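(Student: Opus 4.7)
The plan is to verify the three claims of the lemma in turn: (i) $\overline{\Omega_{B}^{-1}(\mathsf{C})} \subset \mathcal{T}_{out}(\omega)$; (ii) every outlier eigenvalue $\widehat{\lambda}_{\mathfrak{a}}$ with $\mathfrak{a}\in S$ lies in $\Omega_{B}^{-1}(\mathsf{C})$; (iii) every other eigenvalue of $\widehat{\mathcal{Q}}_{1}$ lies outside $\overline{\Omega_{B}^{-1}(\mathsf{C})}$. The key technical input is a local linearization of $\Omega_{B}^{-1}$ near each disk center. Combining the square-root expansion from Lemma \ref{lem_prop_fc}(iii) with Lemma \ref{prop:stabN}(3)-(4) and the inverse function theorem, we get, for $w$ in a real neighborhood to the right of $\Omega_{\beta}(E_{+})$,
\[
\Omega_{B}^{-1}(w)-E_{+}\sim(w-\Omega_{\beta}(E_{+}))^{2},\qquad (\Omega_{B}^{-1})'(w)\sim w-\Omega_{\beta}(E_{+}),
\]
with an analogous expansion for $\Omega_{A}^{-1}$. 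Under \eqref{condition_nonoverone} every center $\widehat{a}_{i}$ (with $\pi_{a}(i)\in S$) satisfies $\widehat{a}_{i}-\Omega_{\beta}(E_{+})\geq N^{-1/3+\tau_{1}}$, so by taking $c_{i}$ small enough that $\rho_{i}^{a}\ll\Delta_{1}(\widehat{a}_{i})^{2}$ the derivative is essentially constant equal to $\Delta_{1}(\widehat{a}_{i})^{2}$ on $B_{\rho_{i}^{a}}(\widehat{a}_{i})$; the parallel statement at the $b$-disk center $\Omega_{B}(\Omega_{A}^{-1}(\widehat{b}_{\mu}))$ follows from the chain rule, using the fact that $\Omega_{A}^{-1}(\widehat{b}_{\mu})-E_{+}\sim\Delta_{2}(\widehat{b}_{\mu})^{4}$.

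For claim (i), the local expansion shows $\Omega_{B}^{-1}(B_{\rho_{i}^{a}}(\widehat{a}_{i}))$ is a near-disk of radius $\sim\Delta_{1}(\widehat{a}_{i})^{2}\rho_{i}^{a}\leq c_{i}\Delta_{1}(\widehat{a}_{i})^{4}$ around $\Omega_{B}^{-1}(\widehat{a}_{i})$, and $\Omega_{B}^{-1}(\widehat{a}_{i})-E_{+}\sim\Delta_{1}(\widehat{a}_{i})^{4}\gtrsim N^{-4/3+4\tau_{1}}$. Taking $c_{i}$ small, every $w\in\overline{B_{\rho_{i}^{a}}(\widehat{a}_{i})}$ maps to $\mathrm{Re}\,\Omega_{B}^{-1}(w)\geq E_{+}+\tfrac{1}{2}\Delta_{1}(\widehat{a}_{i})^{4}\gg E_{+}+N^{-2/3+\omega}$ provided $\omega<2\tau_{1}$; the $b$-disks are treated identically. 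For claim (ii), \eqref{eq_outlierlocationrate} gives $|\widehat{\lambda}_{\pi_{a}(i)}-\Omega_{B}^{-1}(\widehat{a}_{i})|\leq N^{-1/2+\epsilon}\Delta_{1}(\widehat{a}_{i})$, and the containment inside $\Omega_{B}^{-1}(B_{\rho_{i}^{a}}(\widehat{a}_{i}))$ reduces to checking $\rho_{i}^{a}\gg N^{-1/2+\epsilon}/\Delta_{1}(\widehat{a}_{i})$. Both factors in the definition of $\rho_{i}^{a}$ satisfy this: the nonoverlapping bound \eqref{conidition_nonovertwo} gives $\delta_{\pi_{a}(i)}(S)\geq N^{-1/2+\tau_{2}}/\Delta_{1}(\widehat{a}_{i})$, while $\Delta_{1}(\widehat{a}_{i})^{2}\gtrsim N^{-1/3+\tau_{1}}$ yields $\Delta_{1}(\widehat{a}_{i})^{2}\gg N^{-1/2+\epsilon}/\Delta_{1}(\widehat{a}_{i})$ once $\epsilon$ is sufficiently small. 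The $b$-case is analogous through $\Omega_{A}^{-1}$.

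Claim (iii) splits into two subcases. Non-outlier eigenvalues $\widehat{\lambda}_{k}$ with $k>r^{+}+s^{+}$ satisfy $|\widehat{\lambda}_{k}-E_{+}|\leq N^{-2/3+\epsilon}$ by \eqref{eq_edgerigidity}, while claim (i) already shows every point of $\Omega_{B}^{-1}(\mathsf{C})$ is at distance $\gg N^{-2/3+\omega}$ from $E_{+}$; choosing $\epsilon<\omega$ settles this. For an outlier $\widehat{\lambda}_{\mathfrak{a}}$ with $\mathfrak{a}\notin S$ and, say, $\mathfrak{a}=\pi_{a}(j)\leq r^{+}$, the definitions \eqref{eq_alphasinside}-\eqref{eq_alphaoutside} force $|\widehat{a}_{j}-\widehat{a}_{k}|\geq\delta_{\pi_{a}(j)}(S)$ for every $\pi_{a}(k)\in S$ and $|\Omega_{A}(\Omega_{B}^{-1}(\widehat{a}_{j}))-\widehat{b}_{\mu}|\geq\delta_{\pi_{a}(j)}(S)$ for every $\pi_{b}(\mu)\in S$, so $\widehat{a}_{j}$ lies at distance $\geq(1-c_{\max})\delta_{\pi_{a}(j)}(S)$ from every center of $\mathsf{C}$ once the second inequality is transferred to $\widehat{a}$-space via the local bilipschitz property of $\Omega_{B}\circ\Omega_{A}^{-1}$ near the edge. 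Applying $\Omega_{B}^{-1}$ then separates $\Omega_{B}^{-1}(\widehat{a}_{j})$ from $\Omega_{B}^{-1}(\mathsf{C})$ on a scale much larger than $N^{-1/2+\epsilon}\Delta_{1}(\widehat{a}_{j})$, so \eqref{eq_outlierlocationrate} places $\widehat{\lambda}_{\pi_{a}(j)}$ outside $\overline{\Omega_{B}^{-1}(\mathsf{C})}$.

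The main obstacle lies in the bookkeeping for claim (iii): the cross-distances between $a$-centers and $b$-centers are defined through the compositions $\Omega_{A}\circ\Omega_{B}^{-1}$ and $\Omega_{B}\circ\Omega_{A}^{-1}$, and one must check that these compositions are effectively bilipschitz (with controlled distortion) on the relevant scales so that the various notions of $\delta_{\mathfrak{a}}(S)$ really enforce geometric separation in the $\widehat{a}$-plane. This boils down to applying Lemma \ref{prop:stabN}(3) at points whose distance to the edge is comparable to $\Delta_{1}(\cdot)^{4}$ or $\Delta_{2}(\cdot)^{4}$, which is controlled by \eqref{condition_nonoverone}, and so should go through cleanly after a case distinction on which type of center is the nearest one.
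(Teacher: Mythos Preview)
Your proposal is correct and matches the approach of \cite[Lemmas 5.4--5.5]{Bloemendal2016} and \cite[Lemma E.6]{DYaos}, to which the paper itself defers the omitted proof. One remark: your argument for claim (ii) invokes the non-overlapping bound \eqref{conidition_nonovertwo}, which is not listed among the lemma's hypotheses but is available in context, since the lemma is only applied within Step~1 of the proof of Proposition~\ref{prop_maineigenvector}, where Assumption~\ref{assum_eigenvector} is in force.
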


Then we prove Proposition \ref{prop_maineigenvector}. The proof follows from the {same} strategy as \cite[Proposition E.5]{DYaos}.
\begin{proof}[\bf Proof of Proposition \ref{prop_maineigenvector}] The proof consists of two steps. In the first step, we prove the results under Assumption \ref{assum_eigenvector}. In the second step, we { prove the results by removing Assumption \ref{assum_eigenvector} from the first step}. Since the second step is rather standard, we focus on the first step and only briefly discuss the second step.
 
\vspace{2pt}
\noindent{\bf Step 1:} In the first step, we prove the results assuming that the non-overlapping condition Assumption \ref{assum_eigenvector} holds. For convenience, we set $d_i^a=0$ when $i>r$ and deal with the general case at the end of this step. Denote 
\begin{equation}\label{eq_defnez}
\mathcal{E}(z)=z\Ub^*(\Theta(z)-\Gb(z))\Ub. 
\end{equation}
Using the resolvent expansion, we obtain that 
\begin{align}\label{eq_resolventexpansion}
\frac{1}{\bm{\mathcal{D}}^{-1}+z \Ub^* \Gb(z) \Ub}&=\frac{1}{\bm{\mathcal{D}}^{-1}+z \Ub^* \Theta(z) \Ub}+\frac{1}{\bm{\mathcal{D}}^{-1}+z \Ub^* \Theta(z) \Ub} \mathcal{E} \frac{1}{\bm{\mathcal{D}}^{-1}+z \Ub^* \Theta(z) \Ub} \nonumber \\
&+\frac{1}{\bm{\mathcal{D}}^{-1} +z \Ub^* \Theta(z) \Ub} \mathcal{E} \frac{1}{\bm{\mathcal{D}}^{-1}+z \Ub^* \Gb(z) \Ub} \mathcal{E} \frac{1}{\bm{\mathcal{D}}^{-1}+z \Ub^* \Theta(z) \Ub}.
\end{align} 
Together with (\ref{eq_intergralrepresentation}) and (\ref{eq_keyexpansionev}), using the fact that $\Gamma$ does not enclose any pole of $\Gb$ (due to (\ref{eq_edgerigidity})), we have the following decomposition 
\begin{equation*}
\langle \mathbf{e}_i, \mathcal{P}_S \mathbf{e}_j \rangle =\frac{\sqrt{(1+d_i^a)(1+d_j^a)}}{d_i^a d_j^a}(s_0+s_1+s_2),
\end{equation*}
where $s_0, s_1$ and $s_2$ are defined as 
\begin{equation*}
s_0=\frac{\delta_{ij}}{2 \pi \ii} \oint_{\Omega_B^{-1}(\Gamma)} \frac{1}{(d_i^a)^{-1}+a_i(a_i-\Omega_B(z))^{-1}} \frac{\dd z}{z},
\end{equation*}
\begin{equation*}
s_1=\frac{1}{2 \pi \ii} \oint_{\Omega_B^{-1}(\Gamma)} \frac{\mathcal{E}_{ij}(z)}{((d_i^a)^{-1}+a_i(a_i-\Omega_B(z))^{-1})((d_j^a)^{-1}+a_j(a_j-\Omega_B(z))^{-1})} \frac{\dd z}{z},
\end{equation*}
\begin{equation*}
s_2=\frac{1}{2 \pi \ii} \oint_{\Omega_B^{-1}(\Gamma)} \left(\frac{1}{\bm{\mathcal{D}}^{-1} +z \Ub^* \Theta(z) \Ub} \mathcal{E} \frac{1}{\bm{\mathcal{D}}^{-1}+z \Ub^* \Gb(z) \Ub} \mathcal{E} \frac{1}{\bm{\mathcal{D}}^{-1}+z \Ub^* \Theta(z) \Ub} \right)_{ij} \frac{\dd z}{z}.
\end{equation*}

First, we we deal with the term containing $s_0.$ Using Cauchy's integral formula, we readily see that 
\begin{align*}
\frac{\sqrt{(1+d_i^a)(1+d_j^a)}}{d_i^a d_j^a}s_0 &=\frac{\sqrt{(1+d_i^a)(1+d_j^a)}}{d_j^a}  \frac{\delta_{ij}}{2 \pi \ii} \oint_{\Gamma} \frac{(\Omega_B^{-1})'(\zeta)}{\Omega_B^{-1}(\zeta)} \frac{a_i-\zeta}{\wh{a}_i-\zeta} \dd \zeta \\
&= \delta_{ij} \wh{a}_i \frac{(\Omega_B^{-1})'(\wh{a}_i)}{\Omega_B^{-1}(\wh{a}_i)}.
\end{align*}

Second, we control the term containing $s_1.$ We use the change of variables $\zeta=\Omega_{B}(z)$ to obtain that 
\begin{equation}\label{eq_s1decomposition}
s_1=\frac{d_i^a d_j^a}{2 \pi \ii} \oint_{\Gamma} \frac{\xi_{ij}(\zeta)}{(\zeta-\wh{a}_i)(\zeta-\wh{a}_j)} \dd \zeta, \ \xi_{ij}(\zeta)=(\zeta-a_i)(\zeta-a_j) \mathcal{E}_{ij}(\Omega_B^{-1}(\zeta)) \frac{(\Omega_B^{-1})'(\zeta)}{\Omega_B^{-1}(\zeta)}.
\end{equation}
To bound $\xi_{ij}(\zeta),$ we first prepare some useful estimates. When $c_i$'s and $c_\mu$'s are sufficiently small, by a discussion similar to (\ref{eq_edgeestimationrough}), we have that for $\zeta \in \Gamma,$
\begin{equation}\label{eq_complexedgederterministicbound}
|\Omega_{B}^{-1}(\zeta)-E_+| \sim |\zeta-\Omega_{\beta}(E_{+})|^2.
\end{equation} 
Moreover, for any $z_b\in\Omega_B^{-1}(\Gamma)$, by Cauchy's differentiation formula we obtain that 
\begin{equation}\label{eq_cauchydifferentation}
\Omega_B'(z_b)-\Omega_{\beta}'(z_b)=\frac{1}{2 \pi \ii} \oint_{\mathcal{C}_b} \frac{\Omega_B(\xi)-\Omega_{\beta}(\xi)}{(\xi-z_b)^2} \dd \xi,
\end{equation}   
where $\mathcal{C}_b$ is the disc of radius $|z_b-E_+|/2$ centered at $z_b.$ Here we used the facts that both $\Omega_B$ and $\Omega_{\beta}$ are holomorphic on $\Omega_B^{-1}(\Gamma).$ Together with  (\ref{eq_derivativecontrol}), using the residue theorem, we readily see that for some constant $C>0$
\begin{equation*}
\Omega_B'(z_b) \sim C N^{-1/2+\epsilon} |z_b-E_+|^{-1}+|z_b-E_+|^{-1/2} \sim |z_b-E_+|^{-1/2},  
\end{equation*} 
where in the last step we used (\ref{eq_complexedgederterministicbound}) such that $|z_b-E_+| \geq CN^{-2/3+\epsilon}.$ Consequently, using implicit differentiation, we conclude that 
\begin{equation}\label{eq_derivativeinversebound}
(\Omega_B^{-1})'(\zeta) \sim |z_b-E_+|^{1/2} \sim |\zeta-\Omega_{\beta}(E_{+})|, 
\end{equation}  
where in the last step we used (\ref{eq_complexedgederterministicbound}). With the above preparation, we now proceed to control $\xi_{ij}(\zeta)$ and $s_1.$ By (\ref{eq_xi1setestimation}), (\ref{eq_complexedgederterministicbound}) and (\ref{eq_derivativeinversebound}), it is easy to see that for $\zeta \in \Gamma$
\begin{equation}\label{eq_xiijwholebound}
|\xi_{ij}(\zeta)| \lesssim N^{-1/2+\epsilon} |\zeta-\Omega_{\beta}(E_{+})|^{1/2}. 
\end{equation}
Together with a discussion similar to (\ref{eq_cauchydifferentation}), we see that  
\begin{equation}\label{eq_xiijderivativewholebound}
|\xi_{ij}'(\zeta)| \lesssim N^{-1/2+\epsilon} |\zeta-\Omega_{\beta}(E_{+})|^{-1/2}. 
\end{equation}
In order to control $s_1,$ we will consider different cases. In the first case when both $\pi_a(i) \in S$ and $\pi_b(j) \in S,$ if $\wh{a}_i \neq \wh{a}_j,$ by (\ref{eq_s1decomposition}) and the residue theorem, we have that 
\begin{align*}
|s_1| \leq C \left| \frac{\xi_{ij}(\wh{a}_i)-\xi_{ij}(\wh{a}_j)}{\wh{a}_i-\wh{a}_j} \right| \leq \frac{C}{|\wh{a}_i-\wh{a}_j|} \left| \int^{\wh{a}_i}_{\wh{a}_j}|\xi'_{ij}(\zeta)| \dd \zeta \right| \leq \frac{CN^{-1/2+\epsilon}}{\sqrt{{\Delta_1(\wh{a}_i) \Delta_1(\wh{a}_j)}}},
\end{align*} 
where we used (\ref{eq_xiijderivativewholebound}) in the last step. The same argument applies to the case $\wh{a}_i=\wh{a}_j$. In the second case when $\pi_a(i) \in S$ and $\pi_a(j) \notin S,$ we conclude from (\ref{eq_xiijwholebound}) that 
\begin{equation*}
|s_1| \leq C  \frac{\left| \xi_{ij}(\wh{a}_i) \right|}{\left| \wh{a}_i-\wh{a}_j \right|} \leq \frac{C \Delta_1(\wh{a}_i) N^{-1/2+\epsilon}}{\delta^a_{\pi_a(i), \pi_a(j)}}. 
\end{equation*}
Similarly, we can estimate $s_1$ when $\pi_a(i) \notin S$ and $\pi_b(j) \in S.$ Finally, when both $\pi_a(i) \notin S$ and $\pi_b(j) \notin S,$ we have $s_1=0$ by the residue theorem. This completes the estimation regarding $s_1.$ 

We then estimate $s_2,$ which relies on some crucial estimates on the contour. We decompose $\Gamma$ as
\begin{equation}\label{decompose_contour}
\Gamma=\bigcup_{\pi_a(i) \in S} \Gamma_i \cup \bigcup_{\pi_b(\mu) \in S} \Gamma_\mu, \ \Gamma_i:=\Gamma \cap \partial B_{\rho_i^a}(\wh{a}_i), \ \Gamma_\mu:=\Gamma \cap \partial B_{\rho_\mu^b}(\Omega_B(\Omega_A^{-1}(\wh{b}_\mu))).
\end{equation}    
The following lemma is the key input for $s_2.$ Its proof is similar to Lemma E.7 of \cite{DYaos} and omitted here. 
\begin{lem}\label{lem_contourestimation} For any $\pi_a(i) \in S, 1 \leq j \leq r, 1 \leq \nu-N \leq s$ and $\zeta \in \partial B_{\rho_i^a}(\wh{a}_i),$ we have that 
\begin{equation}\label{eq_onlyproveresult}
\left| \zeta-\wh{a}_j \right| \sim \rho_i^a+\delta^a_{\pi_a(i), \pi_a(j)}, 
\end{equation}
and 
\begin{equation*}
\left| \Omega_A(\Omega_B^{-1}(\zeta))-\wh{b}_\nu \right| \sim \rho_i^a+\delta^a_{\pi_a(i), \pi_b(\nu)}.
\end{equation*}
For any $\pi_b(\mu) \in S, 1 \leq j \leq r, 1 \leq \mu-N \leq s$ and $\zeta \in \partial B_{\rho_\mu^b}(\Omega_B(\Omega_A^{-1}(\wh{b}_\mu))),$ we have 
\begin{equation*}
\left| \zeta-\wh{a}_i \right| \sim \rho_\mu^b+\delta^b_{\pi_b(\mu), \pi_a(j)},
\end{equation*}
and 
\begin{equation*}
\left| \Omega_A(\Omega_B^{-1}(\zeta))-\wh{b}_\nu  \right| \sim \rho_\mu^b+\delta^b_{\pi_b(\mu), \pi_b(\nu)}.
\end{equation*}
\end{lem}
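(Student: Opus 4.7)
The plan is to prove each of the four bounds by establishing matching upper and lower estimates, with the analysis driven by whether the target index ($j$ or $\nu$) lies in $S$. The upper bounds follow directly from the triangle inequality: for the first bound I would write $|\zeta-\widehat{a}_j|\leq|\zeta-\widehat{a}_i|+|\widehat{a}_i-\widehat{a}_j|=\rho_i^a+\delta^a_{\pi_a(i),\pi_a(j)}$. For the mixed bound involving $\Omega_A(\Omega_B^{-1}(\zeta))$, I would insert the bridge point $\Omega_A(\Omega_B^{-1}(\widehat{a}_i))$; the first difference is controlled by $\rho_i^a$ times the supremum of $|(\Omega_A\circ\Omega_B^{-1})'|$ on a neighborhood of $\widehat{a}_i$, which by (\ref{eq_derivativecontrol}) and (\ref{eq_derivativeinversebound}) is $\mathrm{O}(1)$. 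The estimates centered at $\Omega_B(\Omega_A^{-1}(\widehat{b}_\mu))$ are handled symmetrically.

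The lower bounds carry the content; I focus on (\ref{eq_onlyproveresult}) and abbreviate $\delta\equiv\delta^a_{\pi_a(i),\pi_a(j)}$. If $\pi_a(j)\notin S$, then by the definition of $\delta_{\pi_a(i)}(S)$ and the choice $\rho_i^a\leq c_i\delta_{\pi_a(i)}(S)$ we have $\delta\geq c_i^{-1}\rho_i^a$, and the reverse triangle inequality yields $|\zeta-\widehat{a}_j|\geq|\widehat{a}_i-\widehat{a}_j|-\rho_i^a\gtrsim\delta\sim\rho_i^a+\delta$ once $c_i$ is small enough. If $\pi_a(j)\in S$ with $\widehat{a}_j=\widehat{a}_i$, then $\delta=0$ and $|\zeta-\widehat{a}_j|=\rho_i^a$ is immediate. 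The subtle subcase is $\pi_a(j)\in S$ with $\widehat{a}_j\neq\widehat{a}_i$: here I exploit that $\zeta$ lies on the \emph{outer} boundary $\Gamma=\partial\mathsf{C}$, hence $\zeta$ cannot lie in the open disc $B_{\rho_j^a}(\widehat{a}_j)$, giving $|\zeta-\widehat{a}_j|\geq\rho_j^a$. Comparing the two radii through $|\Delta_1^2(\widehat{a}_i)-\Delta_1^2(\widehat{a}_j)|\leq|\widehat{a}_i-\widehat{a}_j|\leq\rho_i^a+\rho_j^a$ together with the analogous Lipschitz behavior of $\delta_{\pi_a(\cdot)}(S)$ forces $\rho_j^a\sim\rho_i^a$; combining this with the reverse triangle bound closes the estimate.

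The three mixed estimates follow from the same dichotomy after the change of variables $w=\Omega_A(\Omega_B^{-1}(\zeta))$, or its counterpart on the $b$-side. Under this map, by (\ref{eq_derivativecontrol}), (\ref{eq_derivativeinversebound}) and Lemma \ref{prop:stabN}, the image of $\partial B_{\rho_i^a}(\widehat{a}_i)$ is a smooth curve of diameter comparable to $\rho_i^a$ centered near $\Omega_A(\Omega_B^{-1}(\widehat{a}_i))$, while the discs in $\mathsf{C}$ built from $\Omega_B(\Omega_A^{-1}(\widehat{b}_\mu))$ for $\pi_b(\mu)\in S$ are mapped to discs of comparable radii around the $\widehat{b}_\mu$; the geometric case analysis above transfers unchanged, with $\delta$ replaced by $\delta^a_{\pi_a(i),\pi_b(\nu)}=|\widehat{b}_\nu-\Omega_A(\Omega_B^{-1}(\widehat{a}_i))|$, which is precisely the natural distance in the $w$-coordinate.

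The main obstacle will be the close-spike subcase with both indices in $S$: to guarantee $\rho_j^a\sim\rho_i^a$ uniformly in that regime one must fix the constants $c_i,c_\mu$ in (\ref{eq_defndiscs}) a priori small enough, depending only on the Lipschitz constants of $\Delta_1^2$ and of $\delta_{\pi_a(\cdot)}(S)$ in the spike variable, together with the derivative bounds from Lemma \ref{prop:stabN}. Once this uniform choice is fixed, every constant $c_i,c_\mu$ produced in the proof is absolute, so the estimates hold along the full contour and close the proof.
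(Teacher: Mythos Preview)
Your approach is correct and follows the standard geometric argument that the paper defers to \cite[Lemma~E.7]{DYaos}; the paper itself omits the proof. Your case split (target index outside $S$ handled by $\rho_i^a\leq c_i\delta_{\pi_a(i)}(S)$ and the reverse triangle inequality; target index inside $S$ handled by the outer-boundary property of $\Gamma$ together with the comparability $\rho_j^a\sim\rho_i^a$) is exactly the intended mechanism, and your observation that $(\Omega_A\circ\Omega_B^{-1})'$ is uniformly $\sim 1$ by \eqref{eq_derivativecontrol} and \eqref{eq_derivativeinversebound} is what makes the mixed estimates reduce to the pure ones after the change of variables.

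One point deserves a cleaner statement. In the subcase $\pi_a(j)\in S$, $\wh{a}_j\neq\wh{a}_i$, your argument uses that $\zeta\in\Gamma=\partial\mathsf{C}$ so that $\zeta\notin B_{\rho_j^a}(\wh{a}_j)$; strictly speaking the lemma is phrased for $\zeta\in\partial B_{\rho_i^a}(\wh{a}_i)$, which is a larger set. The lower bound can in fact fail on $\partial B_{\rho_i^a}(\wh{a}_i)\setminus\Gamma$ when the discs overlap, so the statement should be read with $\zeta\in\Gamma_i=\Gamma\cap\partial B_{\rho_i^a}(\wh{a}_i)$ as in \eqref{decompose_contour}; this is also the only case used in bounding $s_2$. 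You implicitly make this restriction, which is correct, but it is worth saying explicitly. Also, in that subcase you do not need the inequality $|\wh{a}_i-\wh{a}_j|\leq\rho_i^a+\rho_j^a$ as an input; rather, split according to whether $\delta\geq 2\rho_i^a$ (reverse triangle alone suffices) or $\delta<2\rho_i^a$ (then $\delta<2c_i\Delta_1^2(\wh{a}_i)$ and $\delta<2c_i\delta_{\pi_a(i)}(S)$ force $\Delta_1^2(\wh{a}_j)\sim\Delta_1^2(\wh{a}_i)$ and, via the Lipschitz bound on $\Omega_A\circ\Omega_B^{-1}$, $\delta_{\pi_a(j)}(S)\sim\delta_{\pi_a(i)}(S)$, hence $\rho_j^a\sim\rho_i^a$ once the $c_k$ are taken equal and small).
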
 
Then $s_2$ is estimated as follows, using (\ref{eq_xi1setestimation}), (\ref{eq_xiijwholebound}), (\ref{eq_complexedgederterministicbound}), and (i) of Proposition \ref{prop:stabN}.
\begin{align}\label{eq_s3bound}
|s_2| & \leq C \oint_{\Gamma} \frac{N^{-1/2+\epsilon}}{|\zeta-\wh{a}_i||\zeta-\wh{a}_j|} \frac{(\Omega_B^{-1})'(\zeta)}{|\zeta-\Omega_{\beta}(E_{+})|} \left \| \bm{\mathcal{D}}^{-1}+ \Omega_B^{-1}(\zeta) \Ub^* \Gb(\Omega_B^{-1}(\zeta)) \Ub \right\| |\dd \zeta|, \nonumber \\
& \leq C \oint_{\Gamma}\frac{N^{-1+2\epsilon}}{|\zeta-\wh{a}_i| |\zeta-\wh{a}_j|} \frac{1}{\mathfrak{d}(\zeta)-\|\mathcal{E}(\Omega_B^{-1}(\zeta)) \|} |\dd \zeta|,
\end{align}
where $\mathfrak{d}(\zeta)$ is defined as 
\begin{equation*}
\mathfrak{d}(\zeta):=\left( \min_{1 \leq j \leq r}\left| \wh{a}_j-\Omega_B(\zeta) \right| \right) \wedge \left( \min_{1 \leq \mu-N \leq s}|\wh{b}_{\mu}-\Omega_A(\Omega_B^{-1}(\zeta))| \right).
\end{equation*}
Moreover, by (\ref{eq_xi1setestimation}) and (\ref{eq_complexedgederterministicbound}), for some constant $C>0,$ we can bound 
\begin{equation}\label{eq_smallbound}
\| \mathcal{E}(\Omega_B^{-1}(\zeta)) \| \leq C \sqrt{rs} N^{-1/2+\epsilon}|\zeta-\Omega_{\beta}(E_{+})|^{-1/2}.
\end{equation}
Recall that both $r$ and $s$ are bounded. Together with Lemma \ref{lem_contourestimation} and the fact $\epsilon<\tau_2$, we obtain that
\begin{equation*}
\| \mathcal{E}(\Omega_B^{-1}(\zeta)) \| \ll {\Delta_1^{-1}(\widehat{a}_i)} N^{-1/2+\tau_2} \lesssim
\begin{cases}
\rho_i^a \lesssim \mathfrak{d}(\zeta), & \text{for} \ \zeta \in \Gamma_i \\
\rho_\mu^b \lesssim \mathfrak{d}(\zeta), & \text{for} \ \zeta \in \Gamma_\mu
\end{cases},
\end{equation*}
where we used (\ref{eq_smallbound}) and Assumption \ref{assum_eigenvector}. Based on the above estimates, we arrive at 
\begin{equation}\label{eq_boundsim1}
\frac{1}{\mathfrak{d}(\zeta)-\|\mathcal{E}(\Omega_B^{-1}(\zeta)) \|} \lesssim
\begin{cases}
(\rho_i^a)^{-1}, & \text{for} \ \zeta \in \Gamma_i \\
(\rho_\mu^b)^{-1}, & \text{for} \ \zeta \in \Gamma_\mu
\end{cases}.
\end{equation}
Now we proceed to control $s_2.$ Decomposing the integral contour in \eqref{eq_s3bound} as in \eqref{decompose_contour}, using \eqref{eq_boundsim1} and Lemma \ref{lem_contourestimation}, and recalling that the length of $\Gamma_i$ (or $\Gamma_\mu$) is at most $2 \pi \rho_i^a$ (or $2\pi \rho_\mu^b$), we get that for some constant $C>0,$
\begin{equation}\label{estimate_s21}
\begin{split}
|s_2| \le  C \sum_{\pi_a(k) \in S} \frac{N^{-1+2\epsilon}}{(\rho_k^a + \delta^a_{\pi_a(k),\pi_a(i)})(\rho_k^a + \delta^a_{\pi_a(k),\pi_a(j)}) } +C \sum_{\pi_b(\mu) \in S} \frac{N^{-1+2\epsilon}}{(\rho_\mu^b + \delta^b_{\pi_b(\mu),\pi_a(i)})(\rho_\mu^b + \delta^b_{\pi_b(\mu),\pi_a(j)})} .
\end{split}
\end{equation}
Now we bound the right-hand side of \eqref{estimate_s21} using Cauchy-Schwarz inequality. For $\pi_a(i)\notin S$, we have
\begin{align*}
\sum_{\pi_a(k)\in S}\frac{1}{(\rho_k^a + \delta^a_{\pi_a(k),\pi_a(i)})^2 }+\sum_{\pi_b(\mu)\in S}\frac{1}{(\rho_\mu^b + \delta^b_{\pi_b(\mu),\pi_a(i)})^2 }  & \le \sum_{\pi_a(k)\in S}\frac{1}{(\delta^a_{\pi_a(k),\pi_a(i)})^2 }+\sum_{\pi_b(\mu)\in S}\frac{1}{ (\delta^b_{\pi_b(\mu),\pi_a(i)})^2 } \\
& \le \frac{C}{\delta_{\pi_a(i)}(S)^2} .
\end{align*}
For $\pi_a(i)\in S$, we have $\rho_k^a + \delta^a_{\pi_a(k),\pi_a(i)}\gtrsim\rho_i^a $ for $\pi_a(k)\in S$, and $\rho_\mu^b + \delta^b_{\beta(\mu),\pi_a(i)}\gtrsim \rho_i^a$ for $\pi_b(\mu)\in S$. Then we have for some constant $C>0$
$$\sum_{\pi_a(k)\in S}\frac{1}{(\rho_k^a + \delta^a_{\pi_a(k),\pi_a(i)})^2 }+\sum_{\pi_b(\mu)\in S}\frac{1}{(\rho_\mu^b + \delta^b_{\pi_b(\mu),\pi_a(i)})^2 }  \le  \frac{C}{(\rho_i^a)^2}\le \frac{C}{\delta_{\pi_a(i)}(S)^2} + \frac{C}{\Delta_1(\wh{a}_i)^4} .$$
Plugging the above two estimates into \eqref{estimate_s21}, we get that
\begin{equation*}
\begin{split}
|s_2| \le  CN^{-1+2\epsilon}\left(\frac1{\delta_{\pi_a(i)}(S)} + \frac{{\mathbf{1}}(\pi_a(i)\in S)}{\Delta_1(\wh{a})^2} \right)\left(\frac1{\delta_{\pi_a(j)}(S)} + \frac{{\mathbf{1}}(\pi_a(j)\in S)}{\Delta_1(\wh{a}_j)^2} \right).
\end{split}
\end{equation*}
So far, we have proved Proposition \ref{prop_maineigenvector} for $1\le i,j \le r$ since $\epsilon$ can be arbitrarily small.

Finally, the general case can be dealt with easily. For general $i,j \in \{1,\cdots, N\}$, we define $\mathcal R:=\{1,\cdots, r\}\cup \{i,j\}$. Then we define a perturbed model as 
\begin{equation}\nonumber
\wh{\mathcal{A}} = A\Big(I+ {\wh{D}}^{a} \Big), \quad \wh{D}^{a}=\text{diag}(d_k^{a})_{k\in \mathcal R},
\end{equation}
where for some $\wt \epsilon>0,$
$$\quad d_{k}^a:=\begin{cases} d_k^a, \ &\text{if } 1\le k\le r\\
\wt\epsilon, \ &\text{if } k\in \mathcal R \text{ and } k>r\end{cases}.
$$
Then all the previous proof goes through for the perturbed model as long as we replace the $\mathbf U$ and $\bm{\mathcal D}$ in \eqref{eq_defnudefnd} with
\begin{equation}\label{Depsilon0_general}
\wh{\mathbf{U}}=
\begin{pmatrix}
\mathbf{E}_{r+2} & 0 \\
0 & \mathbf{E}_s
\end{pmatrix}, \quad \wh{\bm{\mathcal{D}}}=
\begin{pmatrix}
\wh D^a(\wh D^a+1)^{-1} & 0 \\
0 & D^b(D^b+1)^{-1}
\end{pmatrix}.
\end{equation}
Note that in the proof, only the upper bound on the $d_k^a$'s were used. Moreover, the proof does not depend on the fact that $\wh{a}_i$ or $\wh{a}_j$ satisfy \eqref{eq_outlierlocation} (we only need the indices in $S$ to satisfy Assumption \ref{assum_eigenvector}). By taking $\wt\epsilon\downarrow 0$ and using continuity, we get that Proposition \ref{prop_maineigenvector} holds for general $i,j \in \{1,\cdots, N\}$. 

\vspace{2pt}
\noindent{\bf Step 2:} In the second step, we complete our proof by removing Assumption \ref{assum_eigenvector}. In fact, once we finish the proof of Step one, the second step is { relatively standard and follows the same argument} as in \cite[Section E.2]{DYaos} and \cite[Section 5.2]{Bloemendal2016}. The main idea behind is to split the discussion into several cases by considering two sets related to $S$; see Definition E.8 of \cite{DYaos}. We omit the details since our proof follows verbatim as \cite[Section E.2]{DYaos}. This completes our proof.    

\end{proof}

\subsection{Proof of Theorem \ref{thm_nonoutliereigenvector}}\label{sec_proofofnonoutliereigenvector}

In this subsection, we prove the results for the non-outlier eigenvectors. Our goal is to prove the following proposition, from which Theorem \ref{thm_nonoutliereigenvector} immediately follows. 
\begin{prop}\label{prop_outliereigenvector} Fix a small constant $\widetilde{\tau} \in (0, 1/3).$ For $\pi_a(i) \notin \mathcal{O}^+$ and $i \leq \tau N,$ where $\tau>0$ is given in Theorem \ref{thm_nonoutliereigenvector}, we have 
\begin{equation*}
\left| \langle \mathbf{e}_j, \widehat{\ub}_{\pi_a(i)} \rangle \right|^2 \prec \frac{1}{N(\kappa_i+|\widehat{a}_j-\Omega_{\beta}(E_{+})|^2)}. 
\end{equation*}
Moreover, if $\pi_a(i) \in \mathcal{O}^+$ satisfies 
\begin{equation*}
\widehat{a}_i-\Omega_{\beta}(E_{+}) \leq N^{-1/3+\widetilde{\tau}},
\end{equation*} 
we have that 
\begin{equation}\label{eq_extrabound}
\left| \langle \mathbf{e}_j, \widehat{\ub}_{\pi_a(i)} \rangle \right|^2 \prec \frac{N^{4 \widetilde{\tau}}}{N(\kappa_i+|\widehat{a}_j-\Omega_{\beta}(E_{+})|^2)}. 
\end{equation} 
Similar results hold for the right singular vectors.  
\end{prop}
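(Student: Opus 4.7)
\noindent\textbf{Proof proposal for Proposition \ref{prop_outliereigenvector}.}

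My plan is to follow the spectral-resolvent inequality sketched around (\ref{nonoutlier_keyexpansion}), taking $\mathbf{v} = \mathbf{e}_{j}$ and setting
$$z_{i} = \widehat{\lambda}_{\pi_{a}(i)} + \mathrm{i}\eta,$$
with $\eta$ to be optimized. Because the aim is to extract a bound of order $N^{-1}(\kappa_{i} + |\widehat{a}_{j}-\Omega_{\beta}(E_{+})|^{2})^{-1}$, the natural scale is $\eta \asymp N^{-2/3} i^{-1/3}$ (up to a small $N^{\epsilon}$ buffer). This $\eta$ is large enough to stay well above $\eta_{L} = N^{-1+\xi}$ for all $i \leq \tau N$, so the local laws in Theorem \ref{prop_linearlocallaw} are applicable at $z_i$, and simultaneously small enough so that $\eta\,\mathrm{Im}\,\widehat{G}_{jj}(z_{i})$ yields the asserted delocalization bound. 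The location estimate of $z_{i}$ uses (\ref{eq_nnnnnnlilililililili}) from Theorem \ref{thm_outlier} in the first case and (\ref{eq_outlierlocationrate}) in the second case, giving $|\operatorname{Re}(z_{i}) - E_{+}| \lesssim \kappa_{i}$ with high probability.

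The next step is to reduce $\widehat{G}_{jj}$ to quantities that the local law controls. Exactly as in (\ref{eq_wthdecomposition})--(\ref{eq_keyexpansionev}), I would apply the Woodbury identity \eqref{eq_woodbury} to $\widehat{\mathbf{H}} = \mathbf{P}\mathbf{H}\mathbf{P} = \mathbf{H} - z(I-\mathbf{P}^{-2}) + z(I-\mathbf{P}^{-2})$ to obtain, for any index $j$,
\begin{equation*}
\widehat{G}_{jj}(z) = \frac{1}{1+d_{j}}\left[G_{jj}(z) - z\bigl(\mathbf{G}(z)\mathbf{U}(\bm{\mathcal{D}}^{-1}+z\mathbf{U}^{*}\mathbf{G}(z)\mathbf{U})^{-1}\mathbf{U}^{*}\mathbf{G}(z)\bigr)_{jj}\right],
\end{equation*}
where $d_{j}$ equals $d_{j}^{a}$ or $d_{j}^{b}$ depending on the index block (and vanishes outside the spike indices). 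Taking imaginary parts, the first piece is handled by the entrywise local law (\ref{locallaw_edgeentrywise}) and the explicit form of $\Theta_{jj}$, for which Lemma \ref{prop:stabN} and (\ref{eq_localquadtratic}) yield
\begin{equation*}
\mathrm{Im}\,\Theta_{jj}(z_{i}) \lesssim \frac{|\mathrm{Im}\,\Omega_{B}(z_{i})|}{|a_{j}-\Omega_{B}(z_{i})|^{2}} \lesssim \frac{\sqrt{\kappa_{i}+\eta}}{|\widehat{a}_{j}-\Omega_{\beta}(E_{+})|^{2} + \kappa_{i}+\eta},
\end{equation*}
so that the chosen $\eta$ gives $\eta\,\mathrm{Im}\,\Theta_{jj}(z_{i}) \prec 1/(N(\kappa_{i}+|\widehat{a}_{j}-\Omega_{\beta}(E_{+})|^{2}))$. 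A direct calculation confirms that the fluctuation error from (\ref{locallaw_edgeentrywise}), multiplied by $\eta$, is $\mathrm{O}_{\prec}(N^{-1})$ and is absorbed into the target bound since $|\widehat{a}_{j}-\Omega_{\beta}(E_{+})|$ is uniformly bounded.

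The correction term contributed by the Woodbury piece is a finite sum over the $r+s$ spike indices $k,l$ of $G_{jk}(z_{i})[(\bm{\mathcal{D}}^{-1}+z_{i}\mathbf{U}^{*}\mathbf{G}(z_{i})\mathbf{U})^{-1}]_{kl}G_{lj}(z_{i})$. Off-diagonal entries $G_{jk}$ are estimated by the entrywise local law, while $\mathbf{U}^{*}\mathbf{G}\mathbf{U}$ is replaced by $\mathbf{U}^{*}\Theta\mathbf{U}$ using (\ref{eq_proofinsidelocallaw}). The invertibility of $\bm{\mathcal{D}}^{-1}+z_{i}\mathbf{U}^{*}\Theta(z_{i})\mathbf{U}$ follows from the same master relation (\ref{eq_determinantexplicitform}) used in Section \ref{sec_thmoutlier}: for $\pi_{a}(i)\notin \mathcal{O}^{+}$, $z_{i}$ is not near any outlier, so the operator norm of the inverse is $\mathrm{O}(1)$; combined with the entrywise bound $|G_{jk}|^{2} \prec 1/(N\eta)$, the resulting contribution is of order $1/(N\eta)\cdot\eta \asymp N^{-1}$ and again absorbed.

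The main obstacle, and the source of the extra factor in (\ref{eq_extrabound}), is the second regime where $\widehat{a}_{i}-\Omega_{\beta}(E_{+})\leq N^{-1/3+\widetilde{\tau}}$. In that case, $z_{i}$ sits within an $N^{-2/3+2\widetilde{\tau}}$ neighborhood of the corresponding root of (\ref{eq_determinantexplicitform}), so the matrix $\bm{\mathcal{D}}^{-1}+z_{i}\mathbf{U}^{*}\Theta(z_{i})\mathbf{U}$ becomes nearly singular. One quantifies the norm of its inverse using the quadratic behavior (\ref{eq_localquadtratic}) together with the lower bound $\widehat{a}_{i}-\Omega_{\beta}(E_{+})\gtrsim N^{-1/3}$ from (\ref{eq_rsplusedefn}); the resulting bound $\|(\cdots)^{-1}\|\lesssim N^{2\widetilde{\tau}}$ enters quadratically through the double sum, producing the $N^{4\widetilde{\tau}}$ prefactor. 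The remainder of the argument is mechanical once one has a uniform version of the local law on the contour produced by varying $\eta$, and the right-singular-vector statement follows by interchanging the roles of $\widehat{A}$ and $\widehat{B}$ via the identities (\ref{eq_linearizationspectral}).
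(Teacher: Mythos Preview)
Your starting point (spectral inequality \eqref{nonoutlier_keyexpansion} at a scale $\eta\asymp N^{-2/3}i^{-1/3}$, Woodbury expansion of $\widehat{\mathbf G}$) matches the paper's, but the execution of the Woodbury piece contains a genuine gap.

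First, the claim that $\|(\bm{\mathcal D}^{-1}+z_i\mathbf U^*\Theta(z_i)\mathbf U)^{-1}\|=O(1)$ for $\pi_a(i)\notin\mathcal O^+$ is false. That matrix is diagonal with entries $\sim|\widehat a_k-\Omega_B(z_i)|$ and $|\widehat b_\mu-\Omega_A(z_i)|$; Assumption~\ref{assum_outlier} only forces $\widehat a_k>\Omega_\beta(E_+)$ with no gap, so the smallest entry is only bounded below by $\operatorname{Im}\Omega_B(z_i)\sim\sqrt{\kappa_i+\eta}$, hence the inverse norm can be as large as $(\kappa_i)^{-1/2}\sim N^{1/3}$.

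Second, and more seriously, bounding the modulus of the Woodbury correction term by term does not give the target. When $j$ is a spike index, the $(k,l)=(j,j)$ summand has $G_{jj}\sim1$ and $|\Phi_j(z_i)|\sim|\widehat a_j-\Omega_B(z_i)|^{-1}$, so your bound on $\eta\cdot|\text{Woodbury}|$ is $\eta/|\widehat a_j-\Omega_B(z_i)|$, which for $i=O(1)$ and $|\widehat a_j-\Omega_\beta(E_+)|\sim1$ is $\sim N^{-2/3}\gg N^{-1}$. The point you are missing is that the ``main piece'' $G_{jj}\approx\Theta_{jj}$ (which involves $a_j$) and the Woodbury correction (which brings in $d_j^a$) must be \emph{combined algebraically} before taking imaginary parts: the paper carries out this resummation in \eqref{eq_importantexpansion} and arrives at the spiked deterministic approximation
\[
z_i\langle\bm e_j,\widehat{\mathbf G}(z_i)\bm e_j\rangle=\frac{\Omega_B(z_i)}{\widehat a_j-\Omega_B(z_i)}+O\!\left(\frac{\|\mathcal E(z_i)\|}{|\widehat a_j-\Omega_B(z_i)|^2}\right),
\]
see \eqref{eq_representationtwo}. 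Only then is the imaginary part taken, producing \eqref{eq_lastestimate}, and the denominator $|\widehat a_j-\Omega_B(z_i)|$ is converted into $|\widehat a_j-\Omega_\beta(E_+)|+\sqrt{\kappa_i}$ via the dedicated lower bound Lemma~\ref{lem_keyestimate} (which you do not mention). The $N^{4\widetilde\tau}$ in \eqref{eq_extrabound} arises from applying Lemma~\ref{lem_keyestimate} with $\delta=\widetilde\tau$, which costs $N^{2\widetilde\tau}$ in the denominator and then gets squared---not from a quadratic appearance of the inverse norm as you suggest. Your choice of $\eta$ is essentially equivalent to the paper's implicit choice \eqref{eq_etaidefinition}, but the rest of the argument needs this resummation step to go through.
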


The rest of the subsection is devoted to the proof of Proposition \ref{prop_outliereigenvector}. Its proof is similar to Proposition F.1 of \cite{DYaos}. We focus on explaining the main differences following the strategy outlined in Section \ref{sec_proofstartegy}. 

\begin{proof}[\bf Proof of Proposition \ref{prop_outliereigenvector}] Due to similarity, we only prove the first statement.  As mentioned earlier, the control relies on the deterministic inequality (\ref{nonoutlier_keyexpansion}). By Theorems \ref{prop_linearlocallaw}, \ref{thm_outlier}, \ref{thm_outlieereigenvector}, \ref{thm_eigenvaluesticking} and Lemmas \ref{thm_rigidity} and \ref{thm_delocalization}, for any fixed $\epsilon>0,$ we can choose a high-probability event $\Xi_3$ where (\ref{eq_proofinsidelocallaw})--(\ref{eq_proofoutlier}), (\ref{eq_stickingone})--(\ref{eq_stickingtwo}) and the following hold:
\begin{equation*}
\mathbf{1}(\Xi_3)|\widehat{\lambda}_i-\gamma_i| \leq C i^{-1/3} n^{-2/3+\epsilon/2}, \ \text{for} \ \pi_a(i) \notin \mathcal{O}^+ \ \text{and} \  i \leq \tau p. 
\end{equation*}
where we used the interlacing result Lemma \ref{lem_interlacing}. 

In what follows, we again focus on $\Xi_2$ and the discussion will be purely deterministic. Recall $\bm{e}_i$ is the natural embedding of $\mathbf{e}_i$ in $\mathbb{R}^{2N}.$ As discussed in (\ref{Depsilon0_general}), for $1 \leq j \leq \tau N,$ we may define a large set $\{1,2,3,\cdots, r\} \cup \{j\}$ to handle the general case. For simplicity, we assume that $1 \leq j \leq r$ to simplify our notations. Let $\eta_{i}>0$ be the unique solution of 
\begin{equation}\label{eq_etaidefinition}
	\im \Omega_B(\wh{\lambda}_{i}+\ii\eta_{i})=\frac{N^{6 \epsilon}}{\eta_i N},
\end{equation} 
which follows from the facts that $N^{-1+\epsilon}\im \Omega_{B}(\wh{\lambda}_{i}+\ii N^{-1+\epsilon})\lesssim N^{-1+\epsilon}$ due to Lemma \ref{prop:stabN} and the mapping $\eta\mapsto \eta\im\Omega_{B}(\wh{\lambda}_{i}+\ii\eta)$ is monotone increasing (see \cite[Lemma A.8]{DJ1}). \normalcolor According to (\ref{nonoutlier_keyexpansion}) and (\ref{eq_linearizationspectral}), fixing an $\pi_a(i) \notin \mathcal{O}^+,$ we can write 
\begin{equation}\label{eq_representationone}
\left| \langle \mathbf{e}_j, \widehat{\ub}_{\pi_a(i)} \rangle \right|^2 \leq \eta_i \im \langle \bm{e}_j, \widehat{\Gb}(z_{i})\normalcolor \bm{e}_j \rangle , \quad z_i=\widehat{\lambda}_i+\ii \eta_i.
\end{equation}
Recall (\ref{eq_defnez}) {and (\ref{eq_resolventexpansion})}. By (\ref{eq_keyexpansionev}), using the resolvent expansion \eqref{eq_resolventexpansion}, we find that 
\begin{align}\label{eq_importantexpansion}
z \langle \bm{e}_j,  \widehat{\Gb}(z)\normalcolor \bm{e}_j\rangle=\frac{1}{d_j^a}-\frac{1+d_j^a}{(d_j^a)^2} \left[ \Phi_j(z)+\Phi_j^2(z) \left( \mathcal{E}(z)+\mathcal{E}(z) \frac{1}{\mathcal{D}^{-1}+z \Ub^* \Gb(z) \Ub} \mathcal{E}(z) \right)_{jj} \right],
\end{align}
and used the abbreviated notation that
\begin{equation*}
\Phi_j(z):=\left(\frac{1}{d_j^a}-\frac{a_j}{\Omega_B(z)-a_j} \right)^{-1}.
\end{equation*}
Similar to the discussion between (F.7) and (F.15) of \cite{DYaos}, by (\ref{eq_proofinsidelocallaw}), \eqref{eq_localquadtratic}, Lemma \ref{prop:stabN} and the definition (\ref{eq_etaidefinition}), we find that 
\begin{equation*}
\min_j|\Phi_j(z)| \gg \| \mathcal{E}(z_i) \|, 
\end{equation*}
and consequently by (\ref{eq_importantexpansion}), we have 
\begin{equation}\label{eq_representationtwo}
z_i \langle \bm{e}_j,  {\widehat{\Gb}(z_i)} \bm{e}_j\rangle =\frac{\Omega_B(z_i)}{\widehat{a}_j-\Omega_B(z_i)}+\rO \left( \frac{\| \mathcal{E}(z_i) \|}{|\widehat{a}_j-\Omega_B(z_i)|^2}\right). 
\end{equation}
In order to bound the right-hand side of the above equation, we will need the following estimate. Its proof is the same as (6.10) of \cite{Bloemendal2016} or Lemma F.2 of \cite{DYaos} and we omit the details.
\begin{lem}\label{lem_keyestimate} For any fixed $\delta \in [0, 1/3-\epsilon),$ there exists a constant $c>0$ such that 
\begin{equation*}
|\widehat{a}_j-{\Omega_B(z_i)}| \geq c\left[N^{-2\delta}|\widehat{a}_j-\Omega_{\beta}(E_{+})|+\im \Omega_B(z_i) \right],
\end{equation*}
holds whenever $\widehat{\lambda}_i \in  [0, \Omega_B^{-1}( \Omega_{\beta}(E_{+})+N^{-1/3+\delta+\epsilon})).$
\end{lem}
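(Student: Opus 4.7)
The plan is to lower-bound $|\widehat a_j - \Omega_B(z_i)|$ separately by $\im \Omega_B(z_i)$ and by a multiple of $N^{-2\delta}|\widehat a_j - \Omega_\beta(E_+)|$, and then average the two, since the target right-hand side is the sum of these two pieces. The first bound is immediate from $\widehat a_j \in \R$, which gives $|\widehat a_j - \Omega_B(z_i)| \geq |\im(\widehat a_j - \Omega_B(z_i))| = \im \Omega_B(z_i)$. The substance therefore lies in proving the second.

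The crucial preliminary step will be to establish $\re \Omega_B(z_i) \leq \Omega_\beta(E_+) + CN^{-1/3+\delta+\epsilon}$ for some $C>0$, uniformly over admissible $\widehat \lambda_i$. When $\widehat \lambda_i > \wh{E}_+$, the monotonicity of $\Omega_B$ on $(\wh{E}_+, \infty)$ from Lemma \ref{lem_prop_fc}(v) combined with the hypothesis gives $\Omega_B(\widehat \lambda_i) \leq \Omega_\beta(E_+) + N^{-1/3+\delta+\epsilon}$; a Taylor expansion in the imaginary direction together with the derivative bound (\ref{eq_derivativecontrol}) shows that $\re \Omega_B(z_i) - \Omega_B(\widehat \lambda_i) = \mathrm{O}(\eta_i^2|\widehat \lambda_i - E_+|^{-3/2})$ is negligible under the a priori control of $\eta_i$. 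When $\widehat \lambda_i \leq \wh{E}_+$ but lies near the edge, the same estimate follows from the local square-root expansion (\ref{eq_localquadtratic}) combined with the closeness (\ref{eq_closenessofomega}) between $\Omega_B$ and $\Omega_\beta$; when $\widehat \lambda_i$ is deep in the bulk, no such bound is needed because Lemma \ref{prop:stabN}(2) already forces $\im \Omega_B(z_i) \sim 1$, which trivially dominates both pieces of the target.

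Armed with the upper bound on $\re \Omega_B(z_i)$, I would split the proof according to the size of $\widehat a_j - \Omega_\beta(E_+)$. If $\widehat a_j - \Omega_\beta(E_+) \geq 2CN^{-1/3+\delta+\epsilon}$, then
\[
|\widehat a_j - \Omega_B(z_i)| \geq \widehat a_j - \re \Omega_B(z_i) \geq \tfrac12 (\widehat a_j - \Omega_\beta(E_+)) \geq \tfrac12 N^{-2\delta}|\widehat a_j - \Omega_\beta(E_+)|,
\]
using $N^{-2\delta}\leq 1$. If instead $|\widehat a_j - \Omega_\beta(E_+)| < 2CN^{-1/3+\delta+\epsilon}$, then $N^{-2\delta}|\widehat a_j - \Omega_\beta(E_+)| \lesssim N^{-1/3-\delta+\epsilon}$, and it suffices to show $\im \Omega_B(z_i) \gtrsim N^{-1/3-\delta+\epsilon}$. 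Via the defining relation (\ref{eq_etaidefinition}) this reduces to $\eta_i \lesssim N^{-2/3+\delta+\mathrm{O}(\epsilon)}$, which I would derive from the edge scaling in Lemma \ref{prop:stabN}(2) together with the a priori estimate $|\widehat \lambda_i - E_+| \lesssim N^{-2/3+2\delta+\mathrm{O}(\epsilon)}$ arising from the hypothesis and (\ref{eq_localquadtratic}).

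The main obstacle will be the uniform upper bound on $\re \Omega_B(z_i) - \Omega_\beta(E_+)$ in the regime where $\widehat \lambda_i$ is just to the left of $\wh{E}_+$, where the direct monotonicity argument no longer applies and where the closeness (\ref{eq_closenessofomega}) begins to degrade; here one must combine the square-root edge expansion with the closeness estimate on a disc of carefully chosen radius comparable to $|\widehat \lambda_i - E_+|$. Once this is in hand, the remaining arguments are routine.
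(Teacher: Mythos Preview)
The paper omits the proof of this lemma entirely, referring to equation (6.10) of \cite{Bloemendal2016} and Lemma F.2 of \cite{DYaos}. Your proposal is correct and follows essentially the same strategy as in those references: the trivial imaginary-part bound $|\widehat a_j-\Omega_B(z_i)|\ge \im\Omega_B(z_i)$, a preliminary control $\re\Omega_B(z_i)\le \Omega_\beta(E_+)+CN^{-1/3+\delta+\epsilon}$, and then a case split on the size of $|\widehat a_j-\Omega_\beta(E_+)|$ is exactly how the cited arguments proceed. One small clarification: in your second case you write ``the a priori estimate $|\widehat\lambda_i-E_+|\lesssim N^{-2/3+2\delta+\mathrm{O}(\epsilon)}$ arising from the hypothesis,'' but the hypothesis only bounds $\widehat\lambda_i-E_+$ from above, not $|\widehat\lambda_i-E_+|$; when $\widehat\lambda_i$ is further into the bulk the claimed bound on $\eta_i$ is even stronger (since $\im\Omega_B(z_i)\sim\sqrt{\kappa}$ grows with $\kappa$, forcing $\eta_i$ to shrink via \eqref{eq_etaidefinition}), so this does not affect the conclusion.
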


By (\ref{eq_representationone}), (\ref{eq_representationtwo}), and fixing a suitable $\delta>0$ in Lemma \ref{lem_keyestimate}, we find that 
\begin{equation}\label{eq_lastestimate}
\left| \langle \mathbf{e}_j, \widehat{\ub}_{\pi_a(i)} \rangle \right|^2 \leq -\frac{\eta_i^2}{|z_i|^2} \re (\widehat{a}_j-\Omega_B(z_i))^{-1}-\frac{\eta_i \widehat{\lambda}_i}{|z_i|^2} \im (\widehat{a}_j-\Omega_B(z_i))^{-1}+\frac{C \eta_i \| \mathcal{E}(z_i) \|}{|\widehat{a}_j-\Omega_B(z_i)|^{2}}.
\end{equation}
It only remains to control the terms on the right-hand side of (\ref{eq_lastestimate}) one by one. For the first term, we find that 
\begin{equation*}
 \left| \frac{\eta_i^2}{|z_i|^2} \re (\widehat{a}_j-\Omega_{\beta}(E_{+}))^{-1} \right| \leq C \frac{\eta_i^2}{\im \Omega_B(z_i) } \leq C \eta_i^2 N^{1-6\epsilon} \leq C N^{-1+6 \epsilon+3 \delta},
\end{equation*}
where we used the definition of $\eta_i$ in (\ref{eq_etaidefinition}), \eqref{eq_localquadtratic} and (4) of Lemma \ref{prop:stabN}. 
The other two terms can be analyzed similarly and we refer the readers to Section F of \cite{DYaos}. {
For example, for the second term on the right-hand side of (\ref{eq_lastestimate}), we have that on the probability event $\Xi_3$
\begin{align*}
\frac{\eta_i \widehat{\lambda_i}}{|z_i|^2} \operatorname{Im} \left( \widehat{a}_j-\Omega_B(z_i) \right)^{-1} \sim \eta_i \frac{ \operatorname{Im} \Omega_B(z_i)}{|\widehat{a}_j-\Omega_B(z_i)|^2}.
\end{align*}
According to Lemma \ref{lem_keyestimate}, we find that $|\widehat{a}_j-\Omega_B(z_i)|^{-2}=\mathrm{O}(|\widehat{a}_j-\Omega_{\beta}(E_{+})|^{-2})$ by setting $\delta=0.$ Moreover, by (\ref{eq_etaidefinition}), we find that $\operatorname{Im} \Omega_B(z_i)=N^{6 \epsilon}(\eta_i N)^{-1}.$ Consequently, we see that for some constant $C>0,$ 
\begin{equation*}
\frac{\eta_i \widehat{\lambda_i}}{|z_i|^2} \operatorname{Im} \left( \widehat{a}_j-\Omega_B(z_i) \right)^{-1}  \leq C \frac{N^{-1+6 \epsilon}}{|\widehat{a}_j-\Omega_B(z_i)|}.
\end{equation*}
} Using these estimates and (\ref{eq_lastestimate}), we immediately arrive at
\begin{equation}\label{eq_llllll}
 \left| \langle \mathbf{e}_j, \widehat{\ub}_{\pi_a(i)} \rangle \right|^2 \leq CN^{-1+6 \epsilon+3 \delta}+C\frac{N^{-1+6 \epsilon}}{|\widehat{a}_j-\Omega_B(z_i)|^2}.
\end{equation}
Finally, to conclude the proof, we need to provide a lower bound for the denominator of the second term of the right-hand side of (\ref{eq_llllll}). The lower bound directly follows from Lemma \ref{lem_keyestimate} by choosing $\delta=0$. This concludes our proof. 
\end{proof}
\subsection{Proof of Theorem \ref{thm_outlieereigenvector}}\label{sec_prove36withoutconstrain}

In this subsection, we complete the proof of Theorem \ref{thm_outlieereigenvector} by removing (\ref{condition_nonoverone}) from the proof in Section \ref{sec_prove36withconstrain} using the estimate (\ref{eq_extrabound}). The proof is similar to the discussion in Section F of \cite{DYaos} and we only provide the main points.

As discussed earlier, it suffices to prove Proposition \ref{prop_maineigenvector} without imposing (\ref{condition_nonoverone}). Fix a constant $\epsilon>0,$ then it is easy to check that there exists some $x_0 \in [1, r+s+1]$  so that there is no $\wh{a}_{k}$ between $\Omega_{\beta}(E_{+})+x_0N^{-1/3+\epsilon}$ and $\Omega_{\beta}(E_{+})+(x_0+1)N^{-1/3+\epsilon}.$ Thus, following the ideas of \cite[Section 6.2]{Bloemendal2016} or \cite[Section F]{DYaos}, we may split $S=S_0 \cup S_1$ so that $\widehat{a}_k \leq \Omega_{\beta}(E_{+})+x_0 N^{-1/3+\epsilon}$ for $\pi_a(k) \in S_0,$ and $\widehat{a}_k>\Omega_{\beta}(E_{+})+(x_0+1) N^{-1/3+\epsilon}$ for $\pi_a(k) \in S_1.$ Without loss of generality, we assume that $S_0 \neq \emptyset$, otherwise the proof is complete. 

Based on the above discussion, the actual proof is divided into six cases according to which of the sets {$S_0,$ $S_1$ or $S^c$} the indices $\pi_a(i)$ and $\pi_a(j)$ belong to. Due to similarity, we restrict our attention to few specific cases. Other cases can be handled in a similar fashion.  The proof relies on the decomposition
\begin{equation}\label{eq_setdecomposition}
\langle \mathbf{e}_i, \mathcal{P}_S \mathbf{e}_j \rangle=\langle \mathbf{e}_i, \mathcal{P}_{S_0} \mathbf{e}_j \rangle+\langle \mathbf{e}_i, \mathcal{P}_{S_1} \mathbf{e}_j \rangle. 
\end{equation}
When $\pi_a(i), \pi_a(j) \in S_0,$ applying {the Cauchy-Schwarz inequality} and the estimate (\ref{eq_extrabound}) to the first term of the right-hand side of  (\ref{eq_setdecomposition}), and Proposition \ref{prop_maineigenvector} to the second term of it, we obtain that 
\begin{align*}
\left| \langle \mathbf{e}_i, \mathcal{P}_S \mathbf{e}_j \rangle-\delta_{ij} {\mathbf{1}}(\pi_a(i) \in S) \wh{a}_i \frac{(\Omega_B^{-1})'(\wh{a}_i)}{\Omega_B^{-1}(\wh{a}_i)} \right| & \prec \delta_{ij} \Delta_1(\widehat{a}_i)^2+\frac{N^{4 \epsilon}}{N \Delta_1(\widehat{a}_i)^2 \Delta_1(\widehat{a}_j)^2}+\frac{1}{N \delta_{\pi_a(i)}(S_1)\delta_{\pi_a(j)}(S_1)} \\
& =\rO \left( \frac{N^{4 \epsilon}}{N \Delta_1(\widehat{a}_i)^2 \Delta_1(\widehat{a}_j)^2} \right),
\end{align*}
where in the second step we used the fact that $\Delta_1(\widehat{a}_{i(j)})^2=\rO(N^{-1/3+\epsilon})=\rO( \delta_{\pi_a(i(j))}(S_1)).$ Similarly, we can obtain the results when $\pi_a(i), \pi_a(j) \in S_1$ using Propositions \ref{prop_outliereigenvector} and \ref{prop_maineigenvector}. Moreover, when $\pi_a(i) \in S_0, \pi_a(j) \in S_1,$ the proof can be divided into two steps. In the first step, we prove the results under Assumption \ref{assum_eigenvector} for some constant $0<\tau_2<\epsilon$. In the second step we remove this assumption as in the proof of Proposition \ref{prop_maineigenvector}. We only highlight the first step.  Applying {the Cauchy-Schwarz inequality} and Proposition \ref{prop_outliereigenvector} to the first term of the RHS of (\ref{eq_setdecomposition}) , and applying Proposition \ref{prop_maineigenvector} to the second term, we get that 
\begin{align*}
\left|  \langle \mathbf{e}_i, \mathcal{P}_S \mathbf{e}_j \rangle \right| & \prec \frac{N^{4 \epsilon}}{N \Delta_1(\widehat{a}_i)^2 \Delta_1(\widehat{a}_j)^2}+\frac{1}{N\delta_{\pi_a(i)}(S_1)} \left( \frac{1}{\delta_{\pi_j(S_1)}}+\frac{1}{\Delta_1(\widehat{a}_j)^2} \right)+\frac{\Delta_1(\widehat{a}_j)}{\sqrt{N} \delta^a_{\pi_a(i), \pi_a(j)}} \\
& \prec N^{4 \epsilon}\left[ \frac{1}{N\Delta_1(\widehat{a}_i)^2 } \left( \frac{1}{\delta_{\pi_j(S_1)}}+\frac{1}{\Delta_1(\widehat{a}_j)^2} \right)+\frac{1}{\sqrt{N} \Delta_1(\widehat{a}_i) \Delta_1(\widehat{a}_j)} \right],
\end{align*}
where we used the facts that $\Delta_1(\widehat{a}_i)^2=\rO(\delta_{\pi_a(i)}(S_1))=\rO(\Delta_1(\widehat{a}_j)^2)=\rO(\delta^a_{\pi_a(i), \pi_a(j)})$ and $\delta_{\pi_a(j)}(S) \wedge |\widehat{a}_i-\Omega_{\beta}(E_{+})|=\rO(\delta_{\pi_a(j)}(S_1)).$ The other cases can be discussed similarly and we refer the readers to Section F of \cite{DYaos} for more details. This concludes our proof of Theorem \ref{thm_outlieereigenvector}.

\section*{Acknowledgment}  

{The authors would like to thank the editor, the associated editor and two anonymous referees for their many
critical suggestions which have significantly improved the paper.} The authors are also grateful to Zhigang Bao and Ji Oon Lee for many helpful discussions. The first author also wants to thank Hari Bercovici for many useful comments. The first author is partially supported by NSF DMS-2113489 and the second author is supported by ERC Advanced Grant "RMTBeyond" No.~101020331.

\bibliographystyle{myjmva}
\bibliography{multi,Ref}

\begin{thebibliography}{58}
\expandafter\ifx\csname natexlab\endcsname\relax\def\natexlab#1{#1}\fi
\providecommand{\bibinfo}[2]{#2}
\ifx\xfnm\relax \def\xfnm[#1]{\unskip,\space#1}\fi
\bibitem[{Bai and Yao(2008)}]{MR2451053}
\bibinfo{author}{Z.~Bai}, \bibinfo{author}{J.~Yao}, \bibinfo{title}{Central
  limit theorems for eigenvalues in a spiked population model},
  \bibinfo{journal}{Ann. Inst. Henri Poincar\'{e} Probab. Stat.}
  \bibinfo{volume}{44} (\bibinfo{year}{2008}) \bibinfo{pages}{447--474}.
\bibitem[{Bai and Yao(2012)}]{MR2887686}
\bibinfo{author}{Z.~Bai}, \bibinfo{author}{J.~Yao}, \bibinfo{title}{On sample
  eigenvalues in a generalized spiked population model},
  \bibinfo{journal}{Journal of Multivariate Analysis} \bibinfo{volume}{106}
  (\bibinfo{year}{2012}) \bibinfo{pages}{167--177}.
\bibitem[{Baik et~al.(2005)Baik, Arous and Péché}]{BBP}
\bibinfo{author}{J.~Baik}, \bibinfo{author}{G.~B. Arous},
  \bibinfo{author}{S.~Péché}, \bibinfo{title}{{Phase transition of the
  largest eigenvalue for nonnull complex sample covariance matrices}},
  \bibinfo{journal}{The Annals of Probability} \bibinfo{volume}{33}
  (\bibinfo{year}{2005}) \bibinfo{pages}{1643 -- 1697}.
\bibitem[{Baik and Silverstein(2006)}]{BAIK20061382}
\bibinfo{author}{J.~Baik}, \bibinfo{author}{J.~W. Silverstein},
  \bibinfo{title}{Eigenvalues of large sample covariance matrices of spiked
  population models}, \bibinfo{journal}{Journal of Multivariate Analysis}
  \bibinfo{volume}{97} (\bibinfo{year}{2006}) \bibinfo{pages}{1382--1408}.
\bibitem[{Bao et~al.(2022)Bao, Ding, Wang and Wang}]{BDWW}
\bibinfo{author}{Z.~Bao}, \bibinfo{author}{X.~Ding}, \bibinfo{author}{J.~Wang},
  \bibinfo{author}{K.~Wang}, \bibinfo{title}{Statistical inference for
  principal components of spiked covariance matrices}, \bibinfo{journal}{The
  Annals of Statistics} \bibinfo{volume}{50} (\bibinfo{year}{2022})
  \bibinfo{pages}{1144--1169}.
\bibitem[{Bao et~al.(2021)Bao, Ding and Wang}]{MR4206682}
\bibinfo{author}{Z.~Bao}, \bibinfo{author}{X.~Ding}, \bibinfo{author}{K.~Wang},
  \bibinfo{title}{Singular vector and singular subspace distribution for the
  matrix denoising model}, \bibinfo{journal}{The Annals of Statistics}
  \bibinfo{volume}{49} (\bibinfo{year}{2021}) \bibinfo{pages}{370--392}.
\bibitem[{Bao et~al.(2020)Bao, Erdős and Schnelli}]{BEC}
\bibinfo{author}{Z.~Bao}, \bibinfo{author}{L.~Erdős},
  \bibinfo{author}{K.~Schnelli}, \bibinfo{title}{Spectral rigidity for addition
  of random matrices at the regular edge}, \bibinfo{journal}{Journal of
  Functional Analysis} \bibinfo{volume}{279} (\bibinfo{year}{2020})
  \bibinfo{pages}{108639}.
\bibitem[{Bao et~al.(2019)Bao, Hu, Pan and Zhou}]{MR3909944}
\bibinfo{author}{Z.~Bao}, \bibinfo{author}{J.~Hu}, \bibinfo{author}{G.~Pan},
  \bibinfo{author}{W.~Zhou}, \bibinfo{title}{Canonical correlation coefficients
  of high-dimensional {G}aussian vectors: finite rank case},
  \bibinfo{journal}{The Annals of Statistics} \bibinfo{volume}{47}
  (\bibinfo{year}{2019}) \bibinfo{pages}{612--640}.
\bibitem[{Bao and Wang(2022)}]{2020arXiv200913143B}
\bibinfo{author}{Z.~Bao}, \bibinfo{author}{D.~Wang},
  \bibinfo{title}{Eigenvector distribution in the critical regime of {BBP}
  transition}, \bibinfo{journal}{Probability Theory and Related Fields}
  \bibinfo{volume}{182} (\bibinfo{year}{2022}) \bibinfo{pages}{399--479}.
\bibitem[{Belinschi et~al.(2021)Belinschi, Bordenave, Capitaine and
  C\'{e}bron}]{MR4283371}
\bibinfo{author}{S.~Belinschi}, \bibinfo{author}{C.~Bordenave},
  \bibinfo{author}{M.~Capitaine}, \bibinfo{author}{G.~C\'{e}bron},
  \bibinfo{title}{Outlier eigenvalues for non-{H}ermitian polynomials in
  independent i.i.d. matrices and deterministic matrices},
  \bibinfo{journal}{Electronic Journal of Probability} \bibinfo{volume}{26}
  (\bibinfo{year}{2021}).
\bibitem[{Belinschi and Bercovici(2007)}]{Belinschi-Bercovici2007}
\bibinfo{author}{S.~T. Belinschi}, \bibinfo{author}{H.~Bercovici},
  \bibinfo{title}{A new approach to subordination results in free probability},
  \bibinfo{journal}{J. Anal. Math.} \bibinfo{volume}{101}
  (\bibinfo{year}{2007}) \bibinfo{pages}{357--365}.
\bibitem[{Belinschi et~al.(2017)Belinschi, Bercovici, Capitaine and
  Février}]{outliermodel}
\bibinfo{author}{S.~T. Belinschi}, \bibinfo{author}{H.~Bercovici},
  \bibinfo{author}{M.~Capitaine}, \bibinfo{author}{M.~Février},
  \bibinfo{title}{Outliers in the spectrum of large deformed unitarily
  invariant models}, \bibinfo{journal}{Annals of Probability}
  \bibinfo{volume}{45} (\bibinfo{year}{2017}) \bibinfo{pages}{3571--3625}.
\bibitem[{{Belinschi} et~al.(2022){Belinschi}, {Bercovici} and
  {Ho}}]{Belinschi-Bercovici-Ho2022}
\bibinfo{author}{S.~T. {Belinschi}}, \bibinfo{author}{H.~{Bercovici}},
  \bibinfo{author}{C.-W. {Ho}}, \bibinfo{title}{{Regularity for free
  multiplicative convolution on the unit circle}}, \bibinfo{journal}{arXiv
  e-prints}  (\bibinfo{year}{2022}) \bibinfo{pages}{arXiv:2205.07114}.
\bibitem[{Benaych-Georges and Nadakuditi(2011)}]{MR2782201}
\bibinfo{author}{F.~Benaych-Georges}, \bibinfo{author}{R.~R. Nadakuditi},
  \bibinfo{title}{The eigenvalues and eigenvectors of finite, low rank
  perturbations of large random matrices}, \bibinfo{journal}{Advances in
  Mathematics} \bibinfo{volume}{227} (\bibinfo{year}{2011})
  \bibinfo{pages}{494--521}.
\bibitem[{Benaych-Georges and Nadakuditi(2012)}]{MR2944410}
\bibinfo{author}{F.~Benaych-Georges}, \bibinfo{author}{R.~R. Nadakuditi},
  \bibinfo{title}{The singular values and vectors of low rank perturbations of
  large rectangular random matrices}, \bibinfo{journal}{Journal of Multivariate
  Analysis} \bibinfo{volume}{111} (\bibinfo{year}{2012})
  \bibinfo{pages}{120--135}.
\bibitem[{Benaych-Georges and Rochet(2016)}]{MR3500273}
\bibinfo{author}{F.~Benaych-Georges}, \bibinfo{author}{J.~Rochet},
  \bibinfo{title}{Outliers in the single ring theorem},
  \bibinfo{journal}{Probability Theory and Related Fields}
  \bibinfo{volume}{165} (\bibinfo{year}{2016}) \bibinfo{pages}{313--363}.
\bibitem[{Bloemendal et~al.(2014)Bloemendal, Erd\H{o}s, Knowles, Yau and
  Yin}]{alex2014}
\bibinfo{author}{A.~Bloemendal}, \bibinfo{author}{L.~Erd\H{o}s},
  \bibinfo{author}{A.~Knowles}, \bibinfo{author}{H.-T. Yau},
  \bibinfo{author}{J.~Yin}, \bibinfo{title}{Isotropic local laws for sample
  covariance and generalized {W}igner matrices}, \bibinfo{journal}{Electronic
  Journal of Probability} \bibinfo{volume}{19} (\bibinfo{year}{2014})
  \bibinfo{pages}{53 pp.}
\bibitem[{Bloemendal et~al.(2016)Bloemendal, Knowles, Yau and
  Yin}]{Bloemendal2016}
\bibinfo{author}{A.~Bloemendal}, \bibinfo{author}{A.~Knowles},
  \bibinfo{author}{H.-T. Yau}, \bibinfo{author}{J.~Yin}, \bibinfo{title}{On the
  principal components of sample covariance matrices},
  \bibinfo{journal}{Probability Theory and Related Fields}
  \bibinfo{volume}{164} (\bibinfo{year}{2016}) \bibinfo{pages}{459--552}.
\bibitem[{Bloemendal and Vir\'{a}g(2013)}]{MR3078286}
\bibinfo{author}{A.~Bloemendal}, \bibinfo{author}{B.~Vir\'{a}g},
  \bibinfo{title}{Limits of spiked random matrices {I}},
  \bibinfo{journal}{Probab. Theory Related Fields} \bibinfo{volume}{156}
  (\bibinfo{year}{2013}) \bibinfo{pages}{795--825}.
\bibitem[{Bordenave and Capitaine(2016)}]{MR3552011}
\bibinfo{author}{C.~Bordenave}, \bibinfo{author}{M.~Capitaine},
  \bibinfo{title}{Outlier eigenvalues for deformed i.i.d. random matrices},
  \bibinfo{journal}{Communications on Pure and Applied Mathematics}
  \bibinfo{volume}{69} (\bibinfo{year}{2016}) \bibinfo{pages}{2131--2194}.
\bibitem[{{Bun} et~al.(2016){Bun}, {Allez}, {Bouchaud} and {Potters}}]{7587390}
\bibinfo{author}{J.~{Bun}}, \bibinfo{author}{R.~{Allez}},
  \bibinfo{author}{J.~{Bouchaud}}, \bibinfo{author}{M.~{Potters}},
  \bibinfo{title}{Rotational invariant estimator for general noisy matrices},
  \bibinfo{journal}{{IEEE} Transactions on Information Theory}
  \bibinfo{volume}{62} (\bibinfo{year}{2016}) \bibinfo{pages}{7475--7490}.
\bibitem[{Bun et~al.(2017)Bun, Bouchaud and Potters}]{bun2017}
\bibinfo{author}{J.~Bun}, \bibinfo{author}{J.-P. Bouchaud},
  \bibinfo{author}{M.~Potters}, \bibinfo{title}{Cleaning large correlation
  matrices: Tools from random matrix theory}, \bibinfo{journal}{Physics
  Reports} \bibinfo{volume}{666} (\bibinfo{year}{2017}) \bibinfo{pages}{1 --
  109}.
\bibitem[{Capitaine(2014)}]{MR3298725}
\bibinfo{author}{M.~Capitaine}, \bibinfo{title}{Exact separation phenomenon for
  the eigenvalues of large information-plus-noise type matrices, and an
  application to spiked models}, \bibinfo{journal}{Indiana University
  Mathematics Journal} \bibinfo{volume}{63} (\bibinfo{year}{2014})
  \bibinfo{pages}{1875--1910}.
\bibitem[{Capitaine(2018)}]{MR3837103}
\bibinfo{author}{M.~Capitaine}, \bibinfo{title}{Limiting eigenvectors of
  outliers for spiked information-plus-noise type matrices}, in:
  \bibinfo{booktitle}{S\'{e}minaire de {P}robabilit\'{e}s {XLIX}}, volume
  \bibinfo{volume}{2215} of \text{\bibinfo{series}{Lecture Notes in Math.}},
  \bibinfo{publisher}{Springer, Cham}, \bibinfo{year}{2018}, pp.
  \bibinfo{pages}{119--164}.
\bibitem[{Capitaine and Donati-Martin(2017)}]{MR3792626}
\bibinfo{author}{M.~Capitaine}, \bibinfo{author}{C.~Donati-Martin},
  \bibinfo{title}{Spectrum of deformed random matrices and free probability},
  in: \bibinfo{booktitle}{Advanced topics in random matrices},
  volume~\bibinfo{volume}{53} of \text{\bibinfo{series}{Panor. Synth\`eses}},
  \bibinfo{publisher}{Soc. Math. France, Paris}, \bibinfo{year}{2017}, pp.
  \bibinfo{pages}{151--190}.
\bibitem[{Capitaine et~al.(2009)Capitaine, Donati-Martin and
  F\'{e}ral}]{MR2489158}
\bibinfo{author}{M.~Capitaine}, \bibinfo{author}{C.~Donati-Martin},
  \bibinfo{author}{D.~F\'{e}ral}, \bibinfo{title}{The largest eigenvalues of
  finite rank deformation of large {W}igner matrices: convergence and
  nonuniversality of the fluctuations}, \bibinfo{journal}{Ann. Probab.}
  \bibinfo{volume}{37} (\bibinfo{year}{2009}) \bibinfo{pages}{1--47}.
\bibitem[{Capitaine et~al.(2012)Capitaine, Donati-Martin and
  F\'{e}ral}]{MR2919200}
\bibinfo{author}{M.~Capitaine}, \bibinfo{author}{C.~Donati-Martin},
  \bibinfo{author}{D.~F\'{e}ral}, \bibinfo{title}{Central limit theorems for
  eigenvalues of deformations of {W}igner matrices}, \bibinfo{journal}{Ann.
  Inst. Henri Poincar\'{e} Probab. Stat.} \bibinfo{volume}{48}
  (\bibinfo{year}{2012}) \bibinfo{pages}{107--133}.
\bibitem[{Chistyakov and G\"{o}tze(2011)}]{Chistyakov-Gotze2011}
\bibinfo{author}{G.~P. Chistyakov}, \bibinfo{author}{F.~G\"{o}tze},
  \bibinfo{title}{The arithmetic of distributions in free probability theory},
  \bibinfo{journal}{Cent. Eur. J. Math.} \bibinfo{volume}{9}
  (\bibinfo{year}{2011}) \bibinfo{pages}{997--1050}.
\bibitem[{Coston et~al.(2020)Coston, O’Rourke and Wood}]{10.1214/19-AIHP1002}
\bibinfo{author}{N.~Coston}, \bibinfo{author}{S.~O’Rourke},
  \bibinfo{author}{P.~M. Wood}, \bibinfo{title}{{Outliers in the spectrum for
  products of independent random matrices}}, \bibinfo{journal}{Annales de
  l'Institut Henri Poincaré, Probabilités et Statistiques}
  \bibinfo{volume}{56} (\bibinfo{year}{2020}) \bibinfo{pages}{1284 -- 1320}.
\bibitem[{Ding(2020)}]{MR4036038}
\bibinfo{author}{X.~Ding}, \bibinfo{title}{High dimensional deformed
  rectangular matrices with applications in matrix denoising},
  \bibinfo{journal}{Bernoulli} \bibinfo{volume}{26} (\bibinfo{year}{2020})
  \bibinfo{pages}{387--417}.
\bibitem[{Ding(2021)}]{DINGRMTA}
\bibinfo{author}{X.~Ding}, \bibinfo{title}{Spiked sample covariance matrices
  with possibly multiple bulk components}, \bibinfo{journal}{Random Matrices:
  Theory and Applications} \bibinfo{volume}{10} (\bibinfo{year}{2021})
  \bibinfo{pages}{2150014}.
\bibitem[{{Ding} and {Ji}(2020)}]{DJ1}
\bibinfo{author}{X.~{Ding}}, \bibinfo{author}{H.~C. {Ji}},
  \bibinfo{title}{{Local laws for multiplication of random matrices}},
  \bibinfo{journal}{The Annals of Applied Probability (in press)}
  (\bibinfo{year}{2020}).
\bibitem[{Ding and Wu(2021)}]{DW}
\bibinfo{author}{X.~Ding}, \bibinfo{author}{H.-T. Wu}, \bibinfo{title}{On the
  spectral property of kernel-based sensor fusion algorithms of high
  dimensional data}, \bibinfo{journal}{IEEE Trans. Inform. Theory}
  \bibinfo{volume}{67} (\bibinfo{year}{2021}) \bibinfo{pages}{640--670}.
\bibitem[{Ding and Yang(2021)}]{DYaos}
\bibinfo{author}{X.~Ding}, \bibinfo{author}{F.~Yang}, \bibinfo{title}{Spiked
  separable covariance matrices and principal components},
  \bibinfo{journal}{Ann. Statist.} \bibinfo{volume}{49} (\bibinfo{year}{2021})
  \bibinfo{pages}{1113--1138}.
\bibitem[{Ding and Yang(2022)}]{9779233}
\bibinfo{author}{X.~Ding}, \bibinfo{author}{F.~Yang},
  \bibinfo{title}{{Tracy-Widom} distribution for heterogeneous gram matrices
  with applications in signal detection}, \bibinfo{journal}{IEEE Transactions
  on Information Theory} \bibinfo{volume}{68} (\bibinfo{year}{2022})
  \bibinfo{pages}{6682--6715}.
\bibitem[{{Dobriban} and {Liu}(2019)}]{ELnips}
\bibinfo{author}{E.~{Dobriban}}, \bibinfo{author}{S.~{Liu}},
  \bibinfo{title}{{Asymptotics for sketching in least squares regression}},
  \bibinfo{journal}{Conference on Neural Information Processing Systems (NIPS)}
   (\bibinfo{year}{2019}).
\bibitem[{Dudeja et~al.(2020)Dudeja, Bakhshizadeh, Ma and Maleki}]{MR4130669}
\bibinfo{author}{R.~Dudeja}, \bibinfo{author}{M.~Bakhshizadeh},
  \bibinfo{author}{J.~Ma}, \bibinfo{author}{A.~Maleki},
  \bibinfo{title}{Analysis of spectral methods for phase retrieval with random
  orthogonal matrices}, \bibinfo{journal}{IEEE Trans. Inform. Theory}
  \bibinfo{volume}{66} (\bibinfo{year}{2020}) \bibinfo{pages}{5182--5203}.
\bibitem[{Erd\H{o}s et~al.(2013)Erd\H{o}s, Knowles and Yau}]{MR3119922}
\bibinfo{author}{L.~Erd\H{o}s}, \bibinfo{author}{A.~Knowles},
  \bibinfo{author}{H.-T. Yau}, \bibinfo{title}{Averaging fluctuations in
  resolvents of random band matrices}, \bibinfo{journal}{Ann. Henri
  Poincar\'{e}} \bibinfo{volume}{14} (\bibinfo{year}{2013})
  \bibinfo{pages}{1837--1926}.
\bibitem[{Fan et~al.(2021)Fan, Sun and Wang}]{MR4302572}
\bibinfo{author}{Z.~Fan}, \bibinfo{author}{Y.~Sun}, \bibinfo{author}{Z.~Wang},
  \bibinfo{title}{Principal components in linear mixed models with general
  bulk}, \bibinfo{journal}{Ann. Statist.} \bibinfo{volume}{49}
  (\bibinfo{year}{2021}) \bibinfo{pages}{1489--1513}.
\bibitem[{Ji(2021)}]{JHC}
\bibinfo{author}{H.~C. Ji}, \bibinfo{title}{Regularity properties of free
  multiplicative convolution on the positive line}, \bibinfo{journal}{Int.
  Math. Res. Not. IMRN}  (\bibinfo{year}{2021}) \bibinfo{pages}{4522--4563}.
\bibitem[{Johnstone(2001)}]{10.1214/aos/1009210544}
\bibinfo{author}{I.~M. Johnstone}, \bibinfo{title}{{On the distribution of the
  largest eigenvalue in principal components analysis}}, \bibinfo{journal}{The
  Annals of Statistics} \bibinfo{volume}{29} (\bibinfo{year}{2001})
  \bibinfo{pages}{295 -- 327}.
\bibitem[{Jolliffe(2013)}]{jolliffe2013principal}
\bibinfo{author}{I.~Jolliffe}, \bibinfo{title}{Principal Component Analysis},
  Springer Series in Statistics, \bibinfo{publisher}{Springer New York},
  \bibinfo{year}{2013}.
\bibitem[{Knowles and Yin(2013)}]{KY13}
\bibinfo{author}{A.~Knowles}, \bibinfo{author}{J.~Yin}, \bibinfo{title}{The
  isotropic semicircle law and deformation of {W}igner matrices},
  \bibinfo{journal}{Communications on Pure and Applied Mathematics}
  \bibinfo{volume}{66} (\bibinfo{year}{2013}) \bibinfo{pages}{1663--1749}.
\bibitem[{Knowles and Yin(2014)}]{MR3262497}
\bibinfo{author}{A.~Knowles}, \bibinfo{author}{J.~Yin}, \bibinfo{title}{The
  outliers of a deformed {W}igner matrix}, \bibinfo{journal}{Ann. Probab.}
  \bibinfo{volume}{42} (\bibinfo{year}{2014}) \bibinfo{pages}{1980--2031}.
\bibitem[{Kwak et~al.(2021)Kwak, Lee and Park}]{KLP}
\bibinfo{author}{J.~Kwak}, \bibinfo{author}{J.~O. Lee},
  \bibinfo{author}{J.~Park}, \bibinfo{title}{Extremal eigenvalues of sample
  covariance matrices with general population}, \bibinfo{journal}{Bernoulli}
  \bibinfo{volume}{27} (\bibinfo{year}{2021}) \bibinfo{pages}{2740--2765}.
\bibitem[{Lacotte and Pilanci(2020)}]{nips20201}
\bibinfo{author}{J.~Lacotte}, \bibinfo{author}{M.~Pilanci},
  \bibinfo{title}{{Effective Dimension Adaptive Sketching Methods for Faster
  Regularized Least-Squares Optimization}}, in: \bibinfo{booktitle}{Conference
  on Neural Information Processing Systems (NIPS)}.
\bibitem[{Lee and Schnelli(2013)}]{Lee2013ExtremalEA}
\bibinfo{author}{J.~Lee}, \bibinfo{author}{K.~Schnelli},
  \bibinfo{title}{Extremal eigenvalues and eigenvectors of deformed wigner
  matrices}, \bibinfo{journal}{Probability Theory and Related Fields}
  \bibinfo{volume}{164} (\bibinfo{year}{2013}) \bibinfo{pages}{165--241}.
\bibitem[{Liu and Dobriban(2020)}]{Liu2020Ridge}
\bibinfo{author}{S.~Liu}, \bibinfo{author}{E.~Dobriban}, \bibinfo{title}{{Ridge
  Regression: Structure, Cross-Validation, and Sketching}}, in:
  \bibinfo{booktitle}{International Conference on Learning Representations
  (ICLR)}.
\bibitem[{{Ma} and {Yang}(2021)}]{2021arXiv210203297M}
\bibinfo{author}{Z.~{Ma}}, \bibinfo{author}{F.~{Yang}}, \bibinfo{title}{{Sample
  canonical correlation coefficients of high-dimensional random vectors with
  finite rank correlations}}, \bibinfo{journal}{arXiv preprint arXiv
  2102.03297}  (\bibinfo{year}{2021}).
\bibitem[{Morales-Jimenez et~al.(2021)Morales-Jimenez, Johnstone, McKay and
  Yang}]{MR4286186}
\bibinfo{author}{D.~Morales-Jimenez}, \bibinfo{author}{I.~M. Johnstone},
  \bibinfo{author}{M.~R. McKay}, \bibinfo{author}{J.~Yang},
  \bibinfo{title}{Asymptotics of eigenstructure of sample correlation matrices
  for high-dimensional spiked models}, \bibinfo{journal}{Statist. Sinica}
  \bibinfo{volume}{31} (\bibinfo{year}{2021}) \bibinfo{pages}{571--601}.
\bibitem[{Paul(2007)}]{MR2399865}
\bibinfo{author}{D.~Paul}, \bibinfo{title}{Asymptotics of sample eigenstructure
  for a large dimensional spiked covariance model}, \bibinfo{journal}{Statist.
  Sinica} \bibinfo{volume}{17} (\bibinfo{year}{2007})
  \bibinfo{pages}{1617--1642}.
\bibitem[{Paul and Silverstein(2009)}]{PAUL200937}
\bibinfo{author}{D.~Paul}, \bibinfo{author}{J.~W. Silverstein},
  \bibinfo{title}{No eigenvalues outside the support of the limiting empirical
  spectral distribution of a separable covariance matrix},
  \bibinfo{journal}{Journal of Multivariate Analysis} \bibinfo{volume}{100}
  (\bibinfo{year}{2009}) \bibinfo{pages}{37 -- 57}.
\bibitem[{Perry et~al.(2018)Perry, Wein, Bandeira and Moitra}]{perry2018}
\bibinfo{author}{A.~Perry}, \bibinfo{author}{A.~S. Wein},
  \bibinfo{author}{A.~S. Bandeira}, \bibinfo{author}{A.~Moitra},
  \bibinfo{title}{Optimality and sub-optimality of {PCA I}: {S}piked random
  matrix models}, \bibinfo{journal}{Ann. Statist.} \bibinfo{volume}{46}
  (\bibinfo{year}{2018}) \bibinfo{pages}{2416--2451}.
\bibitem[{Tao(2013)}]{MR3010398}
\bibinfo{author}{T.~Tao}, \bibinfo{title}{Outliers in the spectrum of iid
  matrices with bounded rank perturbations}, \bibinfo{journal}{Probab. Theory
  Related Fields} \bibinfo{volume}{155} (\bibinfo{year}{2013})
  \bibinfo{pages}{231--263}.
\bibitem[{Voiculescu(1987)}]{Voiculescu1987}
\bibinfo{author}{D.~Voiculescu}, \bibinfo{title}{Multiplication of certain
  noncommuting random variables}, \bibinfo{journal}{J. Operator Theory}
  \bibinfo{volume}{18} (\bibinfo{year}{1987}) \bibinfo{pages}{223--235}.
\bibitem[{Voiculescu(1991)}]{Voiculescu1991}
\bibinfo{author}{D.~Voiculescu}, \bibinfo{title}{Limit laws for random matrices
  and free products}, \bibinfo{journal}{Invent. Math.} \bibinfo{volume}{104}
  (\bibinfo{year}{1991}) \bibinfo{pages}{201--220}.
\bibitem[{Wang and Yao(2017)}]{MR3611497}
\bibinfo{author}{Q.~Wang}, \bibinfo{author}{J.~Yao}, \bibinfo{title}{Extreme
  eigenvalues of large-dimensional spiked {F}isher matrices with application},
  \bibinfo{journal}{Ann. Statist.} \bibinfo{volume}{45} (\bibinfo{year}{2017})
  \bibinfo{pages}{415--460}.
\bibitem[{Yang et~al.(2021)Yang, Liu, Dobriban and
  Woodruff}]{2020arXiv200500511Y}
\bibinfo{author}{F.~Yang}, \bibinfo{author}{S.~Liu},
  \bibinfo{author}{E.~Dobriban}, \bibinfo{author}{D.~P. Woodruff},
  \bibinfo{title}{How to reduce dimension with pca and random projections?},
  \bibinfo{journal}{IEEE Transactions on Information Theory}
  \bibinfo{volume}{67} (\bibinfo{year}{2021}) \bibinfo{pages}{8154--8189}.

\end{thebibliography}

\end{document}